\theoremstyle{plain}
\newtheorem{thm}{Theorem}[section]
\newtheorem{prop}[thm]{Proposition}
\newtheorem{lem}[thm]{Lemma}
\newtheorem{cor}[thm]{Corollary}
\theoremstyle{definition}
\newtheorem{defn}{Definition}
\newtheorem{notation}[thm]{Notation}
\theoremstyle{remark}
\newtheorem{remark}{Remark}
\newcommand{\hcA}{{\hat{\mathcal{A}}}}
\def\A{{\cal A}}
\def\cA{{\cal A}}
\def\cB{{\mathcal B}}
\def\C{{\mathbb C}}
\def\cc{{\curvearrowright}}
\def\F{{\mathbb F}}
\def\N{{\mathbb N}}
\def\Q{{\mathbb{Q}}}
\def\R{{\mathbb R}}
\def\P{{\cal P}}
\def\cP{{\mathcal P}}
\def\R{{\mathbb R}}
\def\T{{\mathbb T}}
\def\chix{{\raise.5ex\hbox{$\chi$}}}
\def\Z{{\mathbb Z}}
\newcommand\Ad{\operatorname{Ad}}
\newcommand\Ann{\operatorname{Ann}}
\newcommand\Aut{\operatorname{Aut}}
\newcommand\Char{\operatorname{Char}}
\newcommand\ep{\varepsilon}
\newcommand\Hom{\operatorname{Hom}}
\newcommand\Inn{\operatorname{Inn}}
\newcommand\Image{\operatorname{Image}}
\newcommand\IRS{\operatorname{IRS}}
\newcommand\maxorder{{\operatorname{Maxorder}}}
\newcommand\Ker{\operatorname{Ker}}
\newcommand\Sch{\operatorname{Sch}}
\newcommand\CRS{\operatorname{CRS}}
\newcommand\Sub{\operatorname{Sub}}
\newcommand\Subp{\operatorname{Sub_{\F_q}}}
\newcommand\RS{\operatorname{RS}}
\newcommand{\resto}{\upharpoonright}
\begin{document}
\title{Characteristic random subgroups of  geometric  groups and free  abelian groups of infinite rank}
\author{Lewis Bowen\footnote{email:lpbowen@math.utexas.edu, NSF grant DMS-0968762 and NSF CAREER Award DMS-0954606} \\ University of Texas \\ $\ $ \\ Rostislav Grigorchuk \footnote{email:grigorch@math.tamu.edu, NSF grant  DMS-1207699} \\ Texas A\&M University\\ $\ $ \\ Rostyslav Kravchenko \footnote{email:rkchenko@gmail.com} \\ University of Chicago}

\maketitle

\begin{abstract}
%Let $G$ be a discrete group. An {\em invariant random subgroup} (IRS) for $G$ is a random subgroup whose law is conjugation-invariant. A {\em strongly-invariant random subgroup} (CRS) for $G$ is a random subgroup whose law is invariant under all automorphisms of $G$. The law of an IRS (CRS) is also called an IRS (CRS). An IRS is {\em continuous} if it is purely non-atomic. 
We show that if $G$ is a non-elementary word hyperbolic group, mapping class group of a hyperbolic surface or the outer automorphism group of a nonabelian free group then $G$ has $2^{\aleph_0}$ many continuous ergodic invariant random subgroups. If $G$ is a nonabelian free group then $G$ has $2^{\aleph_0}$ many continuous $G$-ergodic characteristic random subgroups. We also provide a complete classification of characteristic random subgroups of free abelian groups of countably infinite rank and elementary $p$-groups of countably infinite rank. 
\end{abstract}

\noindent
{\bf Keywords}: invariant random subgroup, free abelian group\\
{\bf MSC}: 20K20, 20K27, 20P05, 20E07.

\tableofcontents

\section{Introduction}
The goal of this paper is to classify characteristic random subgroups of the free abelian group of infinite rank and of the infinite dimensional torus and use these random subgroups to construct large families of continuous invariant random subgroups in some important non-commutative groups of a geometric nature. The topics of ergodicity and weak-mixing are also included in our research. To achieve this goal we will make use of Pontyagin duality, ergodic theory of groups actions, geometric group theory and an important result of Adyan \cite{A} on the existence of independent identities (laws) in a free group of rank $\ge 2$ (the latter is based on the solution of Burnside's problem). Let us explain this in more detail.

Let $G$ be a locally compact Hausdorff metrizable topological group with a countable basis of open sets. The space $\Sub(G)$ of all closed subgroups of $G$ admits a natural topology (called the Chabauty topology) (see \cite[Chapter E.1]{BP92}). In this topology $\Sub(G)$ is a compact metrizable topological space with a countable basis of open sets. One can characterize the convergence as follows. $H_n\to H$ if and only if two conditions hold. First, if $g_{i}\in H_{n_i}$ converges to $g$, then $g\in H$, and second, if $g\in H$ then there are $g_n\in H_n$ such that $g_n\to g$. It is easy to see that the conjugation action of $G$ on $\Sub(G)$ is continuous. 

%For example, in the special case in which $G$ is countable with the discrete topology, the topology on $\Sub(G)$ is the topology of pointwise convergence. Then $\Sub(G)$ is a totally disconnected compact metrizable set, which can be naturally identified with a closed subset of $\{0,1\}^G$ with the Tychonov topology. The identification is by the rule that $H\leq G$ is mapped to its characteristic function $\chi_H:G\to\{0,1\}$. In fact, for any locally compact group $G$, the space $\Sub(G)$ is compact (see for instance \cite{BP92}). 

In general, by a {\em random subgroup} (RS) of $G$ we mean a random variable which takes values in $\Sub(G)$. We will often identify it with its law, which is a Borel probability measure on $\Sub(G)$.

An {\em invariant random subgroup} (IRS) of $G$ can be interpreted as a Borel probability measure $\mu$ on $\Sub(G)$ that is invariant under the conjugation action of $G$ (i.e. the action of the group of inner automorphisms $\Inn(G)$). We also say that a random subgroup $K\le G$ is an IRS if its law is conjugation-invariant. 

For example, if $N\vartriangleleft G$ is normal then the Dirac mass $\delta_N$ is an IRS. Also, if $\Gamma\le G$ is a lattice in a locally compact group $G$, then there is an IRS $\mu_\Gamma$ of $G$ supported on the conjugacy class of $\Gamma$ ($\mu_\Gamma$ is the stabilizer of a Haar-random element of $G/\Gamma$ \cite{AB+11}).  
Thus IRS's provide a common generalization of normal subgroups and lattices. More generally, if for some $H\le G$, the normalizer $N_G(H)$ of $H$ in $G$ is a lattice in $G$ then there is a naturally associated IRS $\mu_H$ supported on the conjugacy class of $H$. To see this, let $gN_G(H) \in G/N_G(H)$ be a random coset with law equal to the Haar measure. Then $gHg^{-1}$ is an invariant random subgroup. If $G$ is countable then any IRS of this form is purely atomic. $\mu\in\IRS(G)$ is called {\em continuous} if it has no atoms. It is called {\it ergodic} if the dynamical system $(G,\Sub(G),\mu)$ is ergodic. 

There has been a recent increase in the study of IRS's (see for example \cite{ABBGNRS12, BGK13} and the references therein). 
Observe that groups with $2^{\aleph_0}$ many continuous ergodic IRS's include nonabelian free groups \cite{Bo12}, classical lamplighter  groups \cite{BGK13}, the group of finitely-supported permutations of the natural numbers \cite{Ve11}. It is known that branch and weakly branch groups  have at least one continuous ergodic IRS \cite{BG00,Gr11}. There are also results in the opposite direction: groups without any continuous ergodic IRS's include lattices in simple higher rank Lie groups \cite{SZ94}, Higman-Thompson groups \cite{DM}, most special linear groups over countable fields \cite{PT}, commensurators of irreducible lattices in most higher rank semisimple Lie groups \cite{CP} and irreducible lattices in semisimple property (T) higher rank Lie groups \cite{Cr}.

We will denote by $\IRS(G)$ the set of all IRS's of $G$. Thus is a weak*-closed convex subset of the simplex $\RS(G):=\P(\Sub(G))$ of all probability measures on $\Sub(G)$. $\IRS(G)$ consists of all $\Inn(G)$-measures in $\RS(G)$, where $\Inn(G)$ is the group of inner automorphisms of $G$. Another important notion is that of characteristic random subgroup (abbreviated as CRS). Recall that a {\em characteristic} subgroup $K\le G$ is a subgroup which is $\Aut(G)$-invariant. For example, the torsion part of a countable abelian group is characteristic.
\begin{defn}
We define $\CRS(G)$ to be the closed convex subset of $\IRS(G)$ consisting of $\Aut(G)$-invariant measures, where $\Aut(G)$ is the group of all continuous automorphisms of $G$. Elements of $\CRS(G)$ are called characteristic random subgroups of $G$.
\end{defn}

 If $\Phi\leq \Aut(G)$ is a subgroup, one can consider a set $\RS_\Phi(G)$ of $\Phi$-invariant probability measures on $\Sub(G)$. Such a situation arises for instance when $N\unlhd G$ is a normal subgroup and we are interested in the study of random subgroups in $N$ invariant with respect to conjugation by elements of $G$. In this case $\Phi\leq \Aut(N)$ equals the image of $\Inn(G)$ in $\Aut(N)$. An example of this sort is considered in \cite{BGK13}.
 
  The spaces $\IRS(G)$, $\CRS(G)$, $\RS_\Phi(G)$ are all Choquet simplexes (\cite[Section 12]{Ph01}). Their extreme points are indecomposable measures, i.e. measures $\mu$ that cannot be presented in the form  $\mu=t\mu_1+(1-t)\mu_2$ for $t\in (0,1)$ and $\mu_1\ne \mu_2$ belonging to the same simplex. Let $\CRS^e(G) \subset \CRS(G)$ denote the subset of extreme points. In other words, a measure $\mu \in \CRS(G)$ is contained in $\CRS^e(G)$ if there does not exist measures $\mu_1\ne \mu_2 \in \CRS(G)$ and $t\in (0,1)$ such that $\mu=t\mu_1+(1-t)\mu_2$.

   They can be interpreted as ergodic measures in the sense of ergodic theory of group actions, but we have to make a warning at this point that the group $\Aut(G)$ (with the compact-open topology) may turn out to be not locally compact second countable or not countable discrete (and this will be the case in the situation related to Theorems \ref{thm:hcA} and \ref{thm:cAn}) and for non-locally compact groups the relationship of ergodicity and indecomposability is more complicated as for the first time was observed by Kolmogorov \cite{Fo50}. We will denote by $\IRS^e(G)$, $\CRS^e(G)$ etc. the sets of indecomposable measures. 

We recall the definition of ergodic action and of weakly mixing action for a general topological group in Section \ref{2.5}.

Once again, we call a measure $\mu \in RS$, IRS, CRS  etc. continuous if it has no atoms. One of the basic questions in this area is: given an interesting group $G$ and subgroups $\Psi\leq\Phi$ in $\Aut(G)$ does $G$ have any $\Psi$-ergodic  ($\Psi$-weakly mixing, $\Psi$-mixing etc.) $\Phi$-invariant random subgroups. 

For instance Theorem \ref{thm:free} deals with the case when $G=\F_r$ is a free group, $\Phi=\Aut(\F_r)$ is the group of all automorphisms of $\F_r$, and $\Psi=\Inn(\F_r)$ is the group of all inner automorphisms of $\F_r$ which we identify in a natural way with $\F_r$. It is clear that $\Psi$-ergodic and $\Psi$-weakly mixing implies $\Phi$-ergodic and $\Phi$-weakly mixing respectively.

Our first result in this paper is:
\begin{thm}\label{thm:hyp}
If $G$ is either
\begin{itemize}
\item a non-elementary Gromov hyperbolic group,
\item the mapping class group of a (possibly punctured by finitely many points) oriented surface of negative Euler characteristic, or
\item the outer automorphism group of a nonabelian free group
\end{itemize}
then $G$ has $2^{\aleph_0}$ many continuous $G$-weakly mixing IRS's.
\end{thm}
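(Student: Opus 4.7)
The three classes of groups share a common structural feature: each is acylindrically hyperbolic in the sense of Dahmani--Guirardel--Osin, and in particular each contains a hyperbolically embedded non-abelian free subgroup. My plan is to build $2^{\aleph_0}$ many continuous $G$-weakly mixing IRS's by transferring them from such a free subgroup $F\cong F_2\leq G$. The two main ingredients will be: (i) Adyan's theorem on independent identities in $F_2$, used to produce a continuum of distinct characteristic subgroups of $F_2$; and (ii) a coinduction/Bernoulli-type construction that carries IRS's of $F_2$ up to IRS's of $G$ while preserving both continuity and weak mixing.

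First, for each $G$ in the statement, I would select a hyperbolically embedded copy $F_2\leq G$: take two suitably loxodromic elements satisfying a ping-pong condition (pseudo-Anosov elements with disjoint axes on the curve complex in the mapping class group case, fully irreducible elements with disjoint attracting/repelling laminations for $\Out(F_n)$, and any pair of non-commensurable loxodromic elements in the Gromov hyperbolic case). Inside this $F_2$, apply Adyan's theorem to obtain a countable family of independent laws $w_1, w_2, \ldots$; then for each $S\subseteq \N$ the normal closure $V_S=\langle w_n : n\in S\rangle^{F_2}$ is a distinct characteristic subgroup of $F_2$, yielding $2^{\aleph_0}$ pairwise distinct characteristic quotients $Q_S=F_2/V_S$.

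Next, for each $S$ build a continuous $F_2$-weakly mixing IRS $\mu_S$ of $F_2$ via a Bernoulli-type construction with fibre $Q_S$: the shift action $F_2\curvearrowright Q_S^{F_2}$ is weakly mixing, and the pointwise stabilizer in $F_2$ of a Haar-random configuration (reduced modulo a fixed characteristic subgroup) produces a continuous $F_2$-weakly mixing IRS encoding the choice of $S$. Then transfer $\mu_S$ from $F_2$ to $G$ by coinduction along the hyperbolic embedding $F_2\hookrightarrow G$. Distinctness of the resulting IRS's $\nu_S$ of $G$ should follow from distinctness of the $Q_S$ together with the injectivity of coinduction on characteristically distinct inputs.

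The main obstacle is verifying that each coinduced $\nu_S$ is $G$-weakly mixing, not merely $G$-ergodic. Weak mixing requires ruling out every finite-dimensional $G$-invariant subspace of $L^2_0(\Sub(G),\nu_S)$, a spectral condition sensitive to the fine structure of the coset action $G\curvearrowright G/F_2$. My plan is to exploit the asymptotic independence of distinct $G$-conjugates of $F_2$ provided by the hyperbolic embedding, combined with the $F_2$-weak mixing of $\mu_S$, to rule out non-trivial finite-dimensional subrepresentations of the Koopman representation. A secondary issue is showing non-atomicity of $\nu_S$, which should follow from the non-atomicity of $\mu_S$ together with a direct analysis of the coinduction construction.
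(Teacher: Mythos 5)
Your proposal identifies the right raw ingredients (Adyan's theorem, acylindrical hyperbolicity, transfer from a free subgroup), but it misses the key structural input that makes the paper's argument collapse to a one-paragraph reduction, and the step you yourself flag as the main obstacle — coinduction preserving weak mixing — is a genuine gap that the paper's route entirely avoids.

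The crucial fact is that each of the three classes of groups contains a \emph{normal} (not merely hyperbolically embedded) non-abelian free subgroup: for non-elementary hyperbolic groups this is Delzant \cite{De96}, and for mapping class groups and $\Out(F_n)$ this is Dahmani--Guirardel--Osin \cite[Theorems 8.1, 8.5]{DGO}. Once $N\vartriangleleft G$ is normal non-abelian free, every \emph{characteristic} random subgroup of $N$ is automatically conjugation-invariant under $G$, because $\Inn(G)$ maps into $\Aut(N)$; hence a CRS of $N$ is already an IRS of $G$ living inside $\Sub(N)\subset\Sub(G)$, with no coinduction, Bernoulli shift, or $G/F_2$ coset action needed. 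Moreover $N$-weak-mixing upgrades for free to $G$-weak-mixing because the image of $\Inn(G)$ in $\Aut(N)$ contains $\Inn(N)$ and weak mixing passes to overgroups of the acting group (Lemma~\ref{properties}(a)). The paper then proves Theorem~\ref{thm:free} separately: from each of $2^{\aleph_0}$ characteristic subgroups $K\le[\F_r,\F_r]$ furnished by Adyan's theorem, form $K/K_p\cong\oplus_\N\Z/p\Z$, take the kernel of a Haar-random character, and verify $\F_r$-weak mixing via an explicit $\Ad(g_0)$-invariant free basis of $K$ (Lemmas~\ref{lem:free-basis}, \ref{lem:mixing}).

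By contrast, your plan selects a hyperbolically embedded $F_2$ — which is almost never normal — and then must transfer IRS's of $F_2$ to $G$ by coinduction. This is where the real difficulty lies: the stabilizer of a point in the coinduced $G$-space is not in general a subgroup of a single conjugate of $F_2$, so it is not clear the resulting random subgroup is non-atomic, that the map $S\mapsto\nu_S$ is injective, or that weak mixing of the $F_2$-action on the fiber implies anything about finite-dimensional $G$-subrepresentations in the coinduced Koopman representation. None of these are routine, and you do not give an argument for any of them. The fix is to replace ``hyperbolically embedded free subgroup'' by ``normal free subgroup'' and replace coinduction by the observation that characteristic subgroups of a normal subgroup are normal in the ambient group; this dissolves the entire obstacle.
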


%The proof uses {\em characteristic random subgroups} (CRS): these are Borel probability measures on $\Sub(G)$ that are invariant under the full automorphism group, denoted $\Aut(G)$, of $G$ ($\Aut(G)$ acts on $\Sub(G)$ in the obvious way). We let $\IRS(G)$ ($\CRS(G)$) denote the space of $\Inn(G)$-invariant ($\Aut(G)$-invariant) Borel probability measures on $\Sub(G)$ with the weak* topology.  It is a natural question to ask which groups have continuous ergodic CRS's and what additional information can be collected about the simplices $\IRS(G)$ and $\CRS(G)$. To be precise, we will say that a measure on $\Sub(G)$ is {\em $G$-ergodic} ($G$-weakly mixing) if it is ergodic (weakly mixing) with respect to the $G$-action by inner automorphisms. With this terminology, we will avoid confusing $G$-ergodic with $\Aut(G)$-ergodic (for example). Our second result is: %It is called {\em $\Aut(G)$-indecomposable} if it is indecomposable with respect to the $\Aut(G)$-action. Clearly, a $G$-ergodic CRS is also $\Aut(G)$-ergodic but the converse need not hold. We define {\em $G$-weakly mixing} and {\em $\Aut(G)$-weakly mixing} similarly. 
This result we deduce from the next theorem.

\begin{thm}\label{thm:free}
Every non-abelian free group $\F_r$ (of finite or countably infinite rank) has $2^{\aleph_0}$ many continuous $\Inn(\F_r)$-weakly mixing CRS's.
\end{thm}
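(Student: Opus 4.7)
The plan is to combine Adyan's theorem \cite{A} on the existence of infinitely many independent identities in a nonabelian free group with a pullback construction converting IRS's of suitable characteristic quotients into CRS's of $\F_r$. By Adyan's theorem, fix a sequence of words $w_1, w_2, \ldots \in \F_\infty$ whose corresponding laws are mutually independent, in the sense that no $w_i$ is a consequence of the others. Let $V_i = V_{w_i}(\F_r) \unlhd \F_r$ be the associated verbal subgroup, and for each $S \subseteq \N$ set $V_S = \langle V_i : i \in S\rangle$ and $Q_S = \F_r / V_S$. Each $V_S$ is characteristic, and by Adyan-independence the family $\{Q_S\}_{S \subseteq \N}$ consists of pairwise non-isomorphic $\Aut(\F_r)$-equivariant quotients.

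The central observation is the following pullback principle. Because $V_S$ is characteristic, the projection $\pi_S : \F_r \to Q_S$ is $\Aut(\F_r)$-equivariant, so any $\Aut(Q_S)$-invariant random subgroup $\nu_S$ of $Q_S$ pulls back to a CRS $\mu_S := (\pi_S)^*\nu_S$ of $\F_r$, supported on the closed set of subgroups of $\F_r$ containing $V_S$. Moreover, the $\Inn(\F_r)$-action on $(\Sub(\F_r),\mu_S)$ is measurably conjugate (through $\pi_S$) to the $\Inn(Q_S)$-action on $(\Sub(Q_S),\nu_S)$, so weak mixing, ergodicity and continuity all pass through the pullback. Thus it suffices to build, for each relevant $Q_S$, a continuous $\Aut(Q_S)$-invariant $\Inn(Q_S)$-weakly mixing IRS.

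Granted this, varying $S$ over a continuum-sized family $\mathcal{F} \subseteq \mathcal{P}(\N)$ (for instance, using Bernoulli random subsets of $\N$ of varying density $t \in (0,1)$) yields $2^{\aleph_0}$ distinct CRS's: the Adyan-independence guarantees that for $S \neq S'$ there is some $i \in S \triangle S'$ with $V_i$ not in the join of the other $V_j$'s, so a subgroup drawn according to $\mu_S$ generically contains $V_i$ iff $i \in S$, distinguishing $\mu_S$ from $\mu_{S'}$. The infinite-rank case $r = \aleph_0$ then follows from the finite-rank case by projecting $\F_\infty$ onto a rank-$2$ free factor in an $\Aut$-equivariant way, or equivalently by directly applying the construction with laws $w_i$ in two variables.

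\emph{Main obstacle.} The delicate step is constructing, for each Adyan-quotient $Q = Q_S$, a continuous $\Aut(Q)$-invariant $\Inn(Q)$-weakly mixing IRS of $Q$. The naive approach of taking stabilizers in a Bernoulli shift $(Y,\eta)^Q$ fails because such shifts have almost-surely trivial stabilizers and so yield only the trivial (atomic) IRS. Instead, one must exploit the rich subgroup structure of $Q$ as a relatively free group in a proper variety---for example, by taking an $\Aut(Q)$-equivariant coinduced action from a proper characteristic subgroup of $Q$, or by constructing an $\Aut(Q)$-equivariant invariant random partition of $Q$ whose block-stabilizers form a nontrivial continuous IRS. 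Verifying simultaneously the $\Aut(Q)$-equivariance (needed to upgrade IRS to CRS), the weak-mixing property of the $\Inn(Q)$-action, and the non-atomicity of the stabilizer distribution is the technical heart of the proof; Adyan's theorem enters not only for distinctness but also by providing the freedom to choose the $w_i$ so that each $Q_S$ is large and flexible enough to admit such a structure.
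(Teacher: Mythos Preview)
Your proposal has a genuine gap: you explicitly identify the ``main obstacle''---constructing, for each Adyan quotient $Q_S=\F_r/V_S$, a continuous $\Aut(Q_S)$-invariant $\Inn(Q_S)$-weakly mixing IRS---and then do not resolve it. The suggestions you offer (coinduced actions from a characteristic subgroup, equivariant random partitions) are not constructions; they are directions, and it is far from clear any of them produces a \emph{nonatomic} IRS with the required symmetries. The groups $Q_S$ are relatively free groups in exotic Adyan varieties, and nothing in your outline explains why such a $Q_S$ should admit even one continuous $\Inn$-weakly mixing CRS. This is precisely the content of the theorem, so the proposal as written is circular at its core.

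There is also a secondary error: the claimed reduction of the infinite-rank case by ``projecting $\F_\infty$ onto a rank-$2$ free factor in an $\Aut$-equivariant way'' cannot work, since a rank-$2$ free factor of $\F_\infty$ is not characteristic, so no such equivariant projection exists.

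The paper's argument avoids the obstacle entirely by working on the other side of the characteristic subgroup. Rather than passing to the quotient $\F_r/K$ and seeking a CRS there, one keeps $K$ itself (an infinite-rank characteristic subgroup contained in $[\F_r,\F_r]$), forms the elementary abelian quotient $K/K_p\cong\cA_p$, and takes the kernel of a Haar-random character $X\in\widehat{K/K_p}$. Pulling $\Ker(X)$ back to $\F_r$ gives a CRS whose support consists of index-$p$ subgroups of $K$; continuity is immediate because $\Aut(\cA_p)$ acts transitively on index-$p$ subgroups, and the normal closure of this CRS is exactly $K$, so distinct $K$'s give distinct CRS's. Weak mixing is then reduced, via the standard character criterion, to showing that $\Inn(\F_r)$ has no nontrivial finite orbits on $K/K_p$; this is handled by a concrete Schreier-graph argument producing an $\Ad(g_0)$-invariant free basis of $K$ with infinite $\Ad(g_0)$-orbits. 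The point is that the auxiliary object $K/K_p$ is an infinite elementary abelian $p$-group, whose CRS structure is completely understood, whereas your $Q_S$ is not.
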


Let us see how Theorem \ref{thm:free} implies Theorem \ref{thm:hyp}:
\begin{proof}[Proof of Theorem \ref{thm:hyp} from Theorem \ref{thm:free}]
Let $G$ be a countable group and suppose $N\vartriangleleft G$ is a normal non-abelian free subgroup. By Theorem \ref{thm:free}, $N$ has $2^{\aleph_0}$ many continuous $N$-weakly mixing CRS's. However, each of these CRS's is also a $G$-weakly mixing IRS of $G$. This is because $N$ is normal in $G$, so $\Inn(G)$ naturally maps into $\Aut(N)$ and the image of $\Inn(G)$ in $\Aut(N)$ contains $\Inn(N)$ (also see Lemma \ref{properties}(a) below). So it suffices to show that if $G$ is as in Theorem \ref{thm:hyp} then $G$ has a free non-abelian normal subgroup. If $G$ is a non-elementary hyperbolic group, then this result is due to Delzant \cite{De96}. Otherwise, it is due to Dahmani-Guirardel-Osin \cite[Theorems 8.1, 8.5]{DGO}. 
\end{proof}

\begin{remark}
%Recall that a subgroup $H\le G$ is {\em characteristic} if it is invariant under all automorphisms of $G$.
 It follows from the proof above that if a group $G$ has trivial center and possesses a characteristic nonabelian free subgroup $N$ then $G$ has $2^{\aleph_0}$ many continuous $G$-weakly mixing CRS's. %Moreover, if $N$ is characteristic (i.e. invariant under $\Aut(G)$) then $G$ has $2^{\aleph_0}$ many continuous $G$-weakly mixing CRS's. 
\end{remark}

\begin{remark}
As  was  indicated  by  D.Osin the theorem  in fact holds   for  a much  larger  class  of  groups,  namely acylindrically  hyperbolic groups,  as they each contain a normal noncommutative free  subgroup. For  details  see      arXiv:1304.1246  and  arXiv:1111.7048.
\end{remark}

%\begin{remark}
%The proofs of Theorems \ref{thm:free} and \ref{thm:hyp} are constructive.
%\end{remark}

%The proof of Theorem \ref{thm:free} uses CRS's of abelian groups. Although we do not need to classify such CRS's in order to prove Theorem \ref{thm:free}, we have decided t

In order to prove Theorem \ref{thm:free}, we study CRS's of abelian groups. Although the proof of Theorem \ref{thm:free} uses only the existence of a continuous ergodic CRS's in the free elementary $p$-group ($p$-prime) $\oplus_\N (\Z/p\Z)$ of infinite rank, we think it is interesting to obtain a complete classification of indecomposable CRS's of the abelian groups $\A=\oplus_\N \Z,\, \A_n=\oplus_\N \Z/n\Z, \,\hcA=\prod_\N \R/\Z$ and $\hcA_n=\prod_\N \Z/n\Z$ (Theorems \ref{thm:hcA} and \ref{thm:cAn} below). %In fact our result has connections with the result of $\cite{GO09}$ on invariant random subspaces of infinite dimensional vector space over a finite field.

To explain, we need more notation. During this paper $m,n,i,k$ will denote nonnegative integers, $\Z_+=\{0,1,\dots\}$ and $\N=\{1,2,\dots\}$. For $m,n\in\Z_+$, we will write $m|n$ if $m,n\in \N$ and $m$ divides $n$. Note $m|0$ for any $m\in\Z_+$, but $0\not|n$ for $n\in\N$.

Given locally compact abelian groups $G,H$ let $\Hom(G,H)$ denote the set of continuous homomorphisms from $G$ to $H$. This is an abelian group under pointwise addition. We consider $\Hom(G,H)$ with the compact-open topology (\cite[23.34]{HR79}). Let $\T=\R/\Z$ be the one-dimensional torus and $\hat{G} = \Hom(G, \T)$ denote the {\em Pontryagin dual} of $G$. 

Given a subgroup $H\le G$, let $\Ann(H) = \{ \alpha \in \hat{G}:~ \alpha(h) = 0~ \forall h\in H\}$ be its {\em annihilator} subgroup. Given a subgroup $S\le \hat{G}$, let $\Ker(S)=\{g\in G:~s(g)=0~\forall s \in S\}$ denote its {\em kernel}.

Let $\cA=\oplus_\N \Z$ be the free abelian group of countable rank. Its Pontryagin dual group $\hcA$ is naturally isomorphic with $\prod_\N \T=\T^\N$, the infinite-dimensional torus: if $x=\{x_i\} \in \cA$, $y =\{y_i\}\in \T^\N$ then $y(x) := \sum_{i\in \N} x_iy_i \in \T$. For any $n\in\Z_+$, let $n\cA  \le  \cA$ be the subgroup of elements with all coordinates divisible by $n$. Let $\cA_n=\cA/n\cA \cong \oplus_\N \Z/n\Z$. Then for $n\in\N$, the annihilator subgroup $\Ann(n\cA)\le \hcA$ is naturally isomorphic with $\prod_\N \Z[1/n]/\Z=(\Z[1/n]/\Z)^\N$ which is naturally isomorphic with the dual $\hcA_n$ (Lemma \ref{lem:Ann}). Observe that the subgroups $n\cA\le \cA$  are {\em characteristic}: they are invariant under all automorphisms of $\cA$. Similarly, $\Ann(n\cA)=\hcA_n\le \hcA$ is characteristic. Indeed, these are all the characteristic subgroups (Lemma \ref{lem:characteristic}).

In the future we will identify $\hcA_n$ with $\Ann(n\A)\le \hcA$. If $m,n\in\N$ and $m|n$ then $\hcA_m=\Ann(m\A_n)=(\Z[1/m]/\Z)^\N\leq (\Z[1/n]/\Z)^\N=\hcA_n$. In the case $n=0$, $\Ann(0\A)=\hcA$. Thus $\hcA_0=\hcA$. Also $\A_0=\A/0\A=\A$ and $\A_1=\hcA_1$ is a trivial group.

% In the next two results, we classify all extreme CRS's of $\hcA_n,\cA_n$, $n\in\Z_+$. Observe that since $\hcA_n$ for all  $n\in\N$ is a closed characteristic subgroup of $\hcA$, it follows that $\CRS^e(\hcA_n)\subset \CRS^e(\hcA)$. Therefore the description of elements of $\CRS^e(\hcA)$ will give a description of elements of $\CRS^e(\hcA_n)$, as long as we will be able to characterize which elements of $\CRS^e(\hcA)$ in fact lie in $\CRS^e(\hcA_n)$. We will state our results in the form that covers all cases but the proof will be concentrated on the case $n=0$, i.e. $G=\A$ or $\hcA$.

 %First we need a definition and some notation.
 \begin{defn}\label{defn:over}
Let $m\in \Z_+$ and $F$ a finite abelian group. We say that {\it $F$ is over $m$} if either $F=0$ or each nontrivial summand $F_1$ of $F$ satisfies $mF_1\neq 0$. Note that in particular, any $F$ is over $1$ and only the trivial group is over $0$.
\end{defn}
Note that if $F$ is a finite abelian group, then by the main structure theorem for finite abelian groups, $F$ is isomorphic to $\oplus_{i=1}^s\Z/p_i^{r_i}\Z$, where $p_i$ are primes. In this case $F$ is over $m$ if and only if for all $i$ we have that $p_i^{r_i}$ does not divide $m$.

Note that when $F=\oplus_{i=1}^s\Z/p_i^{r_i}\Z$ and $nF=0$ it follows that $p_i^{r_i}|n$ and thus we have 
\begin{equation*}
\begin{aligned}
&\Hom(F,\hcA_n)\simeq\oplus_{i=1}^s\Hom(\Z/p_i^{r_i}\Z,\hcA_n)\simeq\oplus_{i=1}^s\Hom(\Z/p_i^{r_i}\Z,\Z[1/n]/\Z)^\N\simeq\\
&\oplus_{i=1}^s(\Z/p_i^{r_i}\Z)^\N\simeq\oplus_{i=1}^s\hcA_{p_i^{r_i}},
\end{aligned}
\end{equation*}
\begin{equation*}
\begin{aligned}
&\Hom(\A_n,F)\simeq\oplus_{i=1}^s\Hom(\A_n,\Z/p_i^{r_i}\Z)\simeq\oplus_{i=1}^s \Hom(\Z[1/n]/\Z,\Z/p_i^{r_i}\Z)^\N\simeq\\
&\oplus_{i=1}^s(\Z/p_i^{r_i}\Z)^\N\simeq\oplus_{i=1}^s\hcA_{p_i^{r_i}},
\end{aligned}
\end{equation*}
where the isomorphisms are in fact homeomorphisms (\cite[23.34]{HR79}). The Haar measure on $\Hom(F,\hcA_n)$ and $\Hom(\A_n,F)$ equals the product of Haar measures on $\hcA_{p_i^{r_i}}$. %The Haar measure on $\hcA_n=(\Z[1/n]/\Z)^\N$, $n\in\N$, is the product measure of the uniform measures on $\Z[1/n]/\Z\simeq \Z/n\Z$.

 Given a group $H$, let $[H]$ denote its isomorphism class.
\begin{thm}\label{thm:hcA}
 Let $\lambda \in \CRS^e(\hcA_n)$. Then there is a unique pair $(m,[F])$ with $m\in \Z_+$, $F$ a finite abelian group over $m$, such that $m|n$, $nF=0$ and the random subgroup $\Ann(m\A_n)+h(F)$ has law $\lambda$, where $h\in\Hom(F,\hcA_n)$ is a random homomorphism  with law equal to the Haar measure on the compact group $\Hom(F,\hcA_n)$. 
 \end{thm}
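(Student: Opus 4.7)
My plan is to exploit Pontryagin duality together with the previously established classification of characteristic subgroups of $\hcA_n$ as $\{\Ann(k\A_n) : k \mid n\}$, working directly on the $\hcA_n$ side. The existence of $\mu_{m,F}$ as an $\Aut$-invariant measure is immediate: the Borel map $h \mapsto \Ann(m\A_n)+h(F)$ from $\Hom(F,\hcA_n)$ to $\Sub(\hcA_n)$ is equivariant for the post-composition action of $\Aut(\hcA_n)$, $\Ann(m\A_n)$ is $\Aut$-fixed, and Haar measure on $\Hom(F,\hcA_n)$ is $\Aut$-invariant. For ergodicity, using the decomposition $\Hom(F,\hcA_n)\cong\oplus_i\hcA_{p_i^{r_i}}$ I reduce to showing the $\Aut(\A_n)$-action on each factor $(\hcA_{p^r},\text{Haar})$ is ergodic, which by Fourier analysis amounts to showing the only $\Aut$-fixed character is trivial; since $\Aut(\A_{p^r})$ acts transitively on elements of each given order of $\oplus_\N \Z/p^r\Z$ with each nontrivial orbit infinite, every $\Aut$-invariant $L^2$-function is constant.

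Given $\lambda\in\CRS^e(\hcA_n)$, the parameters $(m,[F])$ are recovered as follows. The $\Aut$-equivariant Borel map $K\mapsto K_0 :=$ largest characteristic subgroup of $\hcA_n$ contained in $K$ is $\lambda$-a.s.\ constant; by the classification of characteristic subgroups this must equal $\Ann(m\A_n)$ for a unique $m\mid n$. Similarly $[K/K_0]$ is $\lambda$-a.s.\ equal to a fixed abelian group $H$. One then verifies (the first subtle step, exploiting ergodicity and the maximality of $K_0$) that $H$ is finite; the inclusion $K\le\hcA_n$ yields $nH=0$, and $(n/m)H=0$ follows from $nK=0$ and $mK_0=0$. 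The ``over $m$'' condition is precisely what makes $F\mapsto mF$ a bijection on isomorphism classes of qualifying $F$ (verified primary-by-primary via the structure theorem: the exponents $r_i$ of the primary summands $\Z/p_i^{r_i}\Z$ of $F$ are reconstructed from those of $mF$ by adding $v_{p_i}(m)$), so we uniquely recover $F$ over $m$ with $mF\cong H$.

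The main obstacle is the final step: showing $\lambda=\mu_{m,F}$. On the $\lambda$-full-measure event $\{K_0=\Ann(m\A_n),\ K/K_0\cong H\}$, pass to $\bar K:=K/K_0\le\hcA_n/K_0\cong\hcA_{n/m}$, a random subgroup of $\hcA_{n/m}$ of fixed isomorphism type $H$. The plan is to show that the only $\Aut(\hcA_{n/m})$-invariant ergodic probability measure on subgroups of $\hcA_{n/m}$ of type $H$ is the pushforward of Haar on $\Hom(H,\hcA_{n/m})$ under $\psi\mapsto\psi(H)$, restricted to the injective locus, which has full Haar measure since the complement is a finite union of proper closed subgroups. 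I would prove this by lifting $\lambda$'s pushforward to an $\Aut(\hcA_{n/m})\times\Aut(H)$-invariant measure on $\Hom(H,\hcA_{n/m})$ via the principal $\Aut(H)$-bundle structure of the injective locus, applying the ergodicity argument from the existence step to force the lift to equal Haar, and then pushing back down. Since $\mu_{m,F}$ itself arises from the same construction (the projection $\hcA_n\to\hcA_{n/m}$ sends Haar to Haar on the relevant homomorphism spaces), this yields $\lambda=\mu_{m,F}$.
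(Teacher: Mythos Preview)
Your existence and ergodicity arguments for the measures $\mu_{m,F}$ are fine, and your extraction of the parameters $(m,[F])$ from a given $\lambda$ via the largest characteristic subgroup $K_0$ and the quotient $H=K/K_0$ is a sensible reorganization. The uniqueness of $(m,[F])$ given the theorem is correctly handled via Lemma~\ref{lem:over}. However, two load-bearing steps in your classification are not justified, and both are exactly where the paper invokes its key structural input.

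\textbf{Finiteness of $H$.} You assert that ``exploiting ergodicity and the maximality of $K_0$'' yields $|H|<\infty$, but maximality only says that $\bar K=K/K_0\le\hcA_{n/m}$ contains no $\Ann(p\cA_{n/m})$; it does \emph{not} preclude $\bar K$ being an infinite proper closed subgroup. Already for $n/m=p$ prime there are plenty of infinite proper closed subgroups of $(\Z/p\Z)^\N$, and nothing in your outline rules out an ergodic $\Aut$-invariant law supported on such subgroups. In the paper this finiteness is not proved directly at all: it falls out of Theorem~\ref{thm:key2} (the de Finetti--Hewitt--Savage step) combined with Proposition~\ref{prop:haar}, which together force the ``source'' group $G$ to be $F\oplus\cA_{n_1}\oplus\cdots\oplus\cA_{n_r}$ with the infinite summands absorbed into $\Ann(m\cA)$.

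\textbf{Identifying the lift with Haar measure.} Even granting $|H|<\infty$, your plan to lift the law of $\bar K$ to an $\Aut(\hcA_{n/m})\times\Aut(H)$-invariant measure $\tilde\lambda$ on $\Hom(H,\hcA_{n/m})$ and then ``apply the ergodicity argument from the existence step to force the lift to equal Haar'' conflates ergodicity with unique ergodicity. Your Fourier-analytic argument shows that nontrivial $\Aut$-orbits on the character group of $\Hom(H,\hcA_{n/m})$ are infinite, hence Haar is ergodic; it does \emph{not} show that the Fourier transform of an arbitrary $\Aut$-invariant probability measure vanishes on those orbits. There are in fact many $\Aut(\hcA_{n/m})$-invariant ergodic measures on $\Hom(H,\hcA_{n/m})$---one for each subgroup of $H$---and singling out Haar as the unique one supported on the injective locus is precisely the content of Theorem~\ref{thm:key2}, whose proof requires de~Finetti together with the shear automorphisms $\phi,\psi$.

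In short, the paper does not attempt to read off $m$ and $H$ geometrically and then argue uniqueness on the residual piece; instead it lifts $\lambda$ wholesale to a measure $\chi$ on $\Hom(\cA,\hcA)$ via $\chi=\int\chi_K\,d\lambda(K)$, applies Theorem~\ref{thm:key2} to decompose $\chi$ into Haar measures on $\Hom(\cA/H',\hcA)$, and then uses Proposition~\ref{prop:haar} to identify $\overline{\Image}_*\chi_{G}$ explicitly. Both of your gaps are resolved simultaneously by that single de~Finetti-based lemma, and I do not see how to close them by the Fourier/orbit arguments you sketch.
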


In the next Theorem we give a characterisation of $\CRS^e(\A_n)$.
\begin{thm}\label{thm:cAn}
 Let $\lambda \in \CRS^e(\A_n)$. Then there is a unique pair $(m,[F])$ with $m\geq 0$, $F$ a finite abelian group over $m$, such that $m|n$, $nF=0$ and the random subgroup $m\A_n\cap \Ker(h)$ has law $\lambda$, where $h\in\Hom(\cA_n,F)$ is a random homomorphism with law equal to the Haar measure on the compact group $\Hom(\cA_n,F)$. 
  \end{thm}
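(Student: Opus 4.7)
The plan is to deduce Theorem \ref{thm:cAn} from Theorem \ref{thm:hcA} by means of Pontryagin duality between $\A_n$ and $\hcA_n$. First I would verify that the annihilator map $H \mapsto \Ann(H)$ is a Chabauty homeomorphism $\Sub(\A_n) \to \Sub(\hcA_n)$, and that it intertwines the $\Aut(\A_n)$-action on $\Sub(\A_n)$ with the $\Aut(\hcA_n)$-action on $\Sub(\hcA_n)$ via the canonical isomorphism $\Aut(\A_n) \to \Aut(\hcA_n)$, $\phi \mapsto (\phi^{-1})^\ast$, using the identity $\Ann(\phi(H)) = (\phi^{-1})^\ast(\Ann(H))$. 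Consequently $\Ann_\ast: \RS(\A_n) \to \RS(\hcA_n)$ is an affine homeomorphism that restricts to a bijection $\CRS(\A_n) \to \CRS(\hcA_n)$ preserving extreme points.

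Now take $\mu \in \CRS^e(\A_n)$ and apply Theorem \ref{thm:hcA} to $\Ann_\ast\mu \in \CRS^e(\hcA_n)$: there is a unique pair $(m,[F])$ with $m \mid n$, $nF = 0$, and $F$ over $m$, such that $\Ann_\ast\mu$ is the law of $K = \Ann(m\A_n) + h(F)$ for $h \in \Hom(F, \hcA_n)$ Haar-random. Pulling back via $\Ann$ and using the standard identities $\Ann(\Ann(H)) = H$ and $\Ann(K_1 + K_2) = \Ann(K_1) \cap \Ann(K_2)$ gives
\[
\Ann(K) \;=\; m\A_n \,\cap\, \Ann(h(F)).
\]
Viewing $x \in \A_n$ as a character of $\hcA_n$ under the duality $\A_n \cong \widehat{\hcA_n}$, the condition $x \in \Ann(h(F))$ is equivalent to $x \circ h \equiv 0$ on $F$, so $\Ann(h(F)) = \Ker(\hat h)$, where $\hat h: \A_n \to \hat F$ is the Pontryagin dual of $h$. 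Since $F$ is finite abelian, $\hat F \cong F$, and the map $h \mapsto \hat h$ is a topological group isomorphism $\Hom(F, \hcA_n) \to \Hom(\A_n, \hat F)$ that carries Haar measure to Haar measure. Composing with an isomorphism $\hat F \cong F$ identifies $\mu$ with the law of $m\A_n \cap \Ker(h')$ for Haar-random $h' \in \Hom(\A_n, F)$, and the uniqueness of $(m,[F])$ descends directly from Theorem \ref{thm:hcA}.

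The main obstacle is handling the annihilator correspondence rigorously at both the topological and equivariant levels: continuity of $H \mapsto \Ann(H)$ in the Chabauty topology (a standard fact that I would isolate as a lemma) and the equivariance formula described above. A secondary technical point is verifying that Pontryagin duality $h \leftrightarrow \hat h$ is a topological isomorphism $\Hom(F, \hcA_n) \cong \Hom(\A_n, \hat F)$ in the compact-open topology so that pushforward of Haar measure is Haar measure; this reduces to naturality of duality together with the explicit product decompositions of $\Hom(F, \hcA_n)$ and $\Hom(\A_n, F)$ displayed just before the theorem statement. Once these two technical points are in place, the proof is a formal translation from the $\hcA_n$ side to the $\A_n$ side.
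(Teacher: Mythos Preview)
Your proposal is correct and follows essentially the same approach as the paper: both deduce Theorem \ref{thm:cAn} from Theorem \ref{thm:hcA} via the $\Aut$-equivariant annihilator/kernel homeomorphism $\Sub(\A_n)\leftrightarrow\Sub(\hcA_n)$, then use (\ref{eqn:dual}) to rewrite $\Ann\big(\Ann(m\A_n)+h(F)\big)=m\A_n\cap\Ann(h(F))$, and finally identify $\Ann(h(F))=\Ker(\hat h)$ via the natural topological isomorphism $\Hom(F,\hcA_n)\cong\Hom(\A_n,\hat F)$ together with $\hat F\cong F$. The only organizational difference is that the paper first treats the case $n=0$ (i.e., $\A$ itself) and then handles general $\A_n$ by pulling back along the quotient $\A\to\A_n$ to identify $\CRS^e(\A_n)$ with those elements of $\CRS^e(\A)$ supported on subgroups containing $n\A$, whereas you work directly with the duality $\A_n\leftrightarrow\hcA_n$ for each $n$; both routes are equally valid.
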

 Observe that in both Theorems \ref{thm:hcA} and \ref{thm:cAn} the set $\CRS^e(G)$ is infinite countable. In the case of groups $G=\hcA$ or $G=\A$ the sets $\CRS^e(G)$ can be parametrized by  $m\in \Z_+$ and a tuple of the form $(p_1^{r_1},\dots,p_k^{r_k})$ were $p_i$ are primes, $k_i\in \N$, $p_i\leq p_{i+1}$ and $r_i\leq r_{i+1}$ if $p_i=p_{i+1}$ (such tuples parametrize finite abelian groups).
For the case of groups $G=\hcA_n$ or $G=\A_n$, $n\geq 2$, the sets $\CRS^e(G)$ can be parametrized with the same tuples with the additional restriction that both $m$ and all $p_i^{r_i}$ must divide $n$.

  We thus have that for all cases the set $\CRS^e(G)$ is countable. Moreover the set $\CRS^e(G)$ is closed in $\CRS(G)$ (see Corollary \ref{cor:top}), and hence $\CRS(G)$ is a Bauer simplex in all considered cases (recall that a simplex is called Bauer if the set of extreme points is closed).

Let $n=p\ge 2$ be prime. In this case Theorem \ref{thm:cAn} takes on a particularly simple form. Indeed, if $m|p$ then $m=1$ or $m=p$. If $m=p$, then $F$ over $p$ and $pF=0$ implies that $F=0$. This corresponds to the CRS concentrated on $\{0\}\leq \A_p$. If $m=1$, then any $F$ is over $m$. Since $pF=0$ we have that $F = (\Z/p\Z)^k$ for some $k \ge 1$. Thus every nontrivial ergodic CRS of $\cA_p$ has the form $\Ker(h)$ where $h\in\Hom(\A_p,(\Z/p\Z)^k)$. This implies the random subgroup of $\cA_p$ has index $p^k$ almost surely. Moreover, the CRS is completely determined by the exponent $k$. This special case was obtained earlier by Gnedin-Olshanski \cite{GO09} by completely different methods. Indeed, the idea of \cite{GO09} is to interpret $\cA_p$ as a vector space over the field of order $p$ and obtain a recursive formula for the finite-dimensional marginals. Our method is based on the de Finetti-Hewitt-Savage Theorem (e.g. see \cite{G}) and duality and avoids explicit computations. 

In \cite{GO09} A. Gnedin and G. Olshanski provided a characterisation of random spaces over the Galois field $\F_q$ that are invariant under the natural action of the infinite group of invertible matrices with coefficients from $\F_q$. Our methods allow us to give a different, probabilistic proof of their results. Let us formulate the results.

Let $q$ be a power of a prime and $\F_q$ be the corresponding finite field with $q$ elements. Let $V$ be a locally compact vector space over $\F_q$. Denote by  $\Subp(V)$ the space of closed subspaces of $V$. Let $\Aut_{\F_q}(V)$ be the group of all homeomorphic invertible linear maps $V\to V$. Denote by $\CRS_{\F_q}(G)$ the Borel probability measures on $\Subp(V)$ invariant under $\Aut_{\F_q}(V)$ (these are exactly the laws of random spaces of \cite{GO09}). We have
\begin{thm}\label{thm:hcB}
 Let $\lambda \in \CRS_{\F_q}^e( (\F_q)^\N)$, $\lambda\neq \delta_{(\F_q)^\N}$. Then there is a unique $\kappa\in \Z_+$ such that the random subgroup $h(\F_q^\kappa)$ has law $\lambda$, where a random homomorphism $h\in\Hom_{\F_q}(\F_q^\kappa,(\F_q)^\N)$ is given by the Haar measure on the compact group $\Hom_{\F_q}(\F_q^\kappa,(\F_q)^\N)$. 
 \end{thm}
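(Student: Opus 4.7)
The plan is to dualize $V$ to $W:=\bigoplus_{\N}\F_q$ via Pontryagin duality and then pin down the resulting measures by a single moment computation. Annihilator gives a Borel order-reversing bijection $\Subp(V)\to\Subp(W)$ onto the set of \emph{all} $\F_q$-subspaces of the countable discrete space $W$, and the natural isomorphism $\Aut_{\F_q}(V)\cong\Aut_{\F_q}(W)$ via continuous transposes makes this bijection equivariant. So it suffices to classify the extreme $\Aut_{\F_q}(W)$-invariant Borel probability measures $\mu\ne\delta_0$ on $\Subp(W)$ and translate back.

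Let $\mu$ be such a measure and $L\sim\mu$. Both $\dim_{\F_q}L$ and $\kappa:=\dim_{\F_q}(W/L)$ are $\Aut_{\F_q}(W)$-invariant functions of $L$, hence $\mu$-a.s.\ constant by extremality. Since $\Aut_{\F_q}(W)$ is transitive on $d$-dimensional subspaces of $W$ for each $d\ge 1$ and $W$ has only countably many finite-dimensional subspaces, no invariant probability measure can live on an infinite single orbit; hence $\dim L\in\{0,\aleph_0\}$ and the value $0$ is excluded by $\mu\ne\delta_0$. Set $W_n:=\Span(e_1,\ldots,e_n)$ and $p_k:=\bP(S\le L)$ for any $k$-dimensional $S\le W$ (well-defined by transitivity on $k$-dim subspaces). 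Double-counting $k$-dimensional subspaces of $L\cap W_n$ yields
\[
\bE\prod_{i=0}^{k-1}\frac{q^{\dim(L\cap W_n)-i}-1}{q^{i+1}-1}\;=\;p_k\prod_{i=0}^{k-1}\frac{q^{n-i}-1}{q^{i+1}-1}.
\]
The sequence $\kappa_n:=n-\dim(L\cap W_n)$ is $\mu$-a.s.\ nondecreasing with limit $\kappa$, and the ratio of the two products tends a.s.\ to $q^{-k\kappa}$ (with the convention $q^{-\infty}=0$) while being uniformly bounded by $1$; dominated convergence then gives $p_k=q^{-k\kappa}$.

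If $\kappa=\aleph_0$ then $p_k=0$ for every $k\ge 1$, and countable subadditivity forces $\bP(L\ne 0)\le\sum_{v\ne 0}\bP(v\in L)=0$, contradicting $\dim L=\aleph_0$. So $\kappa\in\Z_+$ and $p_k=q^{-k\kappa}$. These numbers determine $\mu$: M\"obius inversion in the subspace lattice of $W_n$ expresses each $\bP(L\cap W_n=S')$ in terms of $(p_k)_{k\ge\dim S'}$, and the consistency $L\cap W_{n-1}=(L\cap W_n)\cap W_{n-1}$ upgrades these marginals to the joint law along $L=\bigcup_n L\cap W_n$. On the other hand, the pushforward $\mu^*$ of Haar measure on $\Hom_{\F_q}(W,\F_q^\kappa)=(\F_q^\kappa)^\N$ under $f\mapsto\Ker f$ is $\Aut_{\F_q}(W)$-invariant (Haar is preserved by linear automorphisms), and for linearly independent $v_1,\ldots,v_k\in W$ the images $f(v_1),\ldots,f(v_k)$ are i.i.d.\ uniform on $\F_q^\kappa$, so $\mu^*$ satisfies $p_k=q^{-k\kappa}$; hence $\mu=\mu^*$. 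Pontryagin transposition sends Haar on $\Hom_{\F_q}(W,\F_q^\kappa)$ to Haar on $\Hom_{\F_q}(\F_q^\kappa,V)$ and $\Ker f$ to $h(\F_q^\kappa)$ with $h=f^*$, which is the form of $\lambda$ asserted in the theorem; uniqueness of $\kappa$ follows from $|h(\F_q^\kappa)|=q^\kappa$ being an a.s.\ invariant of $\lambda$.

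The main technical point will be the dominated-convergence step: a.s.\ convergence of the Gaussian-binomial ratio to $q^{-k\kappa}$ and the subsequent exchange of limit and expectation, in particular handling the $\kappa=\aleph_0$ regime where $\kappa_n\to\infty$ so that each factor tends to $0$. The remaining ingredients — the Pontryagin setup, transitivity on $k$-dim subspaces, M\"obius inversion on the finite subspace lattices, and the direct verification of $p_k=q^{-k\kappa}$ for the Haar candidate — are routine.
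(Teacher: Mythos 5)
Your proof is correct, but it takes a genuinely different route from the paper's. The paper deduces Theorem \ref{thm:hcB} as a direct analogue of Theorem \ref{thm:hcA}, by first lifting the CRS to an $\Aut_{\F_q}((\F_q)^\N)$-invariant measure on $\Hom_{\F_q}(\oplus_\N\F_q,(\F_q)^\N)$ (the $\chi = \int \chi_K\,d\lambda(K)$ device), then invoking Theorem \ref{t1} — the $\F_q$-linear version of Theorem \ref{thm:key2}, whose proof rests on the de Finetti--Hewitt--Savage theorem applied to the exchangeable coordinates of a random homomorphism — and finally applying Proposition \ref{p1} to read off the image. Your argument instead dualizes to the discrete space $W=\oplus_\N\F_q$ and pins down the extreme invariant laws by computing the ``moments'' $p_k = \bP(S\le L)$ directly from the Gaussian-binomial identity and a dominated-convergence passage $p_k = q^{-k\kappa}$, then recovers the measure via M\"obius inversion on the finite subspace lattices $W_n$. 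This is precisely the ``finite-dimensional marginals'' method that the paper attributes to Gnedin--Olshanski and explicitly sets itself apart from (``Our method is based on the de Finetti-Hewitt-Savage Theorem \ldots and avoids explicit computations''). Your approach is more elementary and self-contained — it sidesteps de Finetti entirely — but it leans on features special to the $\F_q$-linear category (transitivity of $\Aut_{\F_q}(W)$ on $k$-dimensional subspaces, the clean Gaussian-binomial asymptotics, and the discreteness of the dual lattice), so it does not directly port to the integral case of Theorems \ref{thm:hcA} and \ref{thm:cAn}, where the exchangeability/de Finetti machinery is doing real work. Both arguments are valid; the paper's scales to the harder abelian case at the cost of heavier probabilistic input, yours is shorter and more explicit where the linear-algebraic structure is available.

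One cosmetic remark: in your dualization you write ``Annihilator gives a\ldots bijection $\Subp(V)\to\Subp(W)$'', which is the map the paper calls $\Ker$ (the paper reserves $\Ann$ for $\Sub(W)\to\Sub(\hat W)=\Sub(V)$); this is harmless since the two are inverse to each other, but worth aligning with the paper's conventions if this is to be spliced in. Otherwise all the delicate points — the a.s.\ constancy of $\dim L$ and $\kappa$ by extremality, the exclusion of an infinite countable orbit, the boundedness $0\le R_n\le 1$ (including the degenerate case $\dim(L\cap W_n)<k$ where the product vanishes via the $i=\dim(L\cap W_n)$ factor), the $\kappa=\aleph_0$ dichotomy via $p_1=0$, and the consistency upgrade from marginals of $L\cap W_n$ to the joint law — are handled correctly.
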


%In the next Theorem we give a characterisation of $\CRS_{\F_q}^e(\oplus_\N \F_q)$.
\begin{thm}\label{thm:cBn}
 Let $\lambda \in \CRS_{\F_q}^e(\oplus_\N \F_q)$, $\lambda\neq \delta_{\oplus_\N \F_q}$. Then there is a unique $\kappa\in\Z_+$ such that the random subgroup $\Ker(h)$ has law $\lambda$, where a random homomorphism $h\in\Hom_{\F_q}(\oplus_\N \F_q,\F_q^\kappa)$ is given by the Haar measure on the compact group $\Hom_{\F_q}(\oplus_\N \F_q,\F_q^\kappa)$. 
  \end{thm}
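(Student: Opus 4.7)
The plan is to deduce Theorem \ref{thm:cBn} from Theorem \ref{thm:hcB} by Pontryagin duality applied in the category of locally compact topological $\F_q$-vector spaces. Since $\F_q$ is self-dual as an abelian group, and the self-duality respects its $\F_q$-module structure, the Pontryagin dual of the discrete group $\oplus_\N \F_q$ is canonically identified with the compact $\F_q$-vector space $\F_q^\N$, and vice versa.

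First I would verify that the annihilator map $H \mapsto \Ann(H)$ is a Chabauty-continuous bijection $\Sub_{\F_q}(\oplus_\N \F_q) \to \Sub_{\F_q}(\F_q^\N)$, with continuous inverse $S \mapsto \Ker(S)$; this is the standard lattice correspondence of Pontryagin duality together with continuity of the annihilator in the Chabauty topology on LCA groups. Because the Pontryagin dual of a topological $\F_q$-linear automorphism is again such an automorphism, and $\Ann(\phi H) = (\hat\phi)^{-1}\Ann(H)$, pushforward under the annihilator is an affine homeomorphism $\CRS_{\F_q}(\oplus_\N \F_q) \to \CRS_{\F_q}(\F_q^\N)$ that preserves extreme points.

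To transport the parametrization of Theorem \ref{thm:hcB} to the present setting I would invoke the duality identity $\Ann(h(\F_q^\kappa)) = \Ker(\hat h)$, where $\hat h \in \Hom_{\F_q}(\oplus_\N \F_q, \F_q^\kappa)$ is the Pontryagin dual of $h \in \Hom_{\F_q}(\F_q^\kappa, \F_q^\N)$ (identifying $\widehat{\F_q^\kappa} = \F_q^\kappa$). The assignment $h \mapsto \hat h$ is a topological group isomorphism between these two $\Hom$ spaces --- both are canonically isomorphic to $\F_q^{\kappa \times \N}$ with the product topology --- and hence pushes Haar measure to Haar measure. Combining, the law of $\Ann(h(\F_q^\kappa))$ under Haar-random $h$ equals the law of $\Ker(h')$ for Haar-random $h' \in \Hom_{\F_q}(\oplus_\N \F_q, \F_q^\kappa)$, yielding the parametrization claimed. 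Uniqueness of $\kappa$ follows either from uniqueness in Theorem \ref{thm:hcB} via the bijective correspondence, or directly from the observation that a Haar-random $h'$ is almost surely surjective, so that $\kappa = \dim_{\F_q}\bigl((\oplus_\N \F_q)/\Ker(h')\bigr)$ is determined by the law.

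The main technical obstacle is verifying Chabauty-continuity of the annihilator together with equivariance for the full $\Aut_{\F_q}$-actions; both are standard consequences of Pontryagin duality adapted to the $\F_q$-linear category, but require a careful unpacking of the compact-open topology on the $\Hom$ spaces and the Chabauty topology on subspace lattices on each side. Everything else --- preservation of extremality, the Haar-to-Haar transport, and the identity $\Ann(\Image h) = \Ker(\hat h)$ --- is standard.
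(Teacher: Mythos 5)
Your approach matches the paper's: the authors state that Theorems \ref{thm:hcB} and \ref{thm:cBn} are proved in the same way as Theorems \ref{thm:hcA} and \ref{thm:cAn}, and the proof of \ref{thm:cAn} is exactly the reduction by duality you carry out --- push the parametrization of the ``product'' side through the annihilator/kernel correspondence, using the identity (in the paper's notation) $\Ker(h(F)) = \Ker(\hat h)$ together with the topological isomorphism of the two $\Hom$ spaces, which transports Haar measure to Haar measure. The only cosmetic difference is that the paper sets up $\F_q$-linear duality $V^* = \Hom_{\F_q}(V,\F_q)$ in \S\ref{GO1}, while you use Pontryagin duality; these agree up to a canonical $\F_q$-linear isomorphism, as your own observation about the $\F_q$-linearity of the self-pairing on $\F_q$ shows.
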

  Denote by $\mu_\kappa\in\CRS_{\F_q}^e(\oplus_\N \F_q)$ the measure in Theorem \ref{thm:cBn}. Define $\tilde{v}_{n,k}=\mu_\kappa(\{X\,|\, \dim_{\F_q}(X\cap V_n)=k\})$. 
We have 
\begin{thm}\label{appro}
Suppose that $n\geq k\geq 0,$ and $\kappa\geq n-k$. Then $\tilde{v}_{n,k}$ is equal to the number of $\F_q$-matrices of size $\kappa\times n$ which have rank $n-k$ divided by the number of all $\F_q$-matrices of size $\kappa\times n$. For all other pairs $(n,k)$, $\tilde{v}_{n,k}=0$.
\end{thm}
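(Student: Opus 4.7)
The plan is to read off $\tilde v_{n,k}$ directly from the description of $\mu_\kappa$ given in Theorem \ref{thm:cBn}, reducing the problem to counting matrices over $\F_q$ of a prescribed rank.

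First I would unpack the Haar measure on $\Hom_{\F_q}(\oplus_\N \F_q,\F_q^\kappa)$. Since $\oplus_\N \F_q$ is the algebraic direct sum of copies of $\F_q$ indexed by $\N$, an $\F_q$-linear map $h$ is uniquely determined by the sequence of images $(h(e_i))_{i \in \N}$ of the standard basis vectors, and any such sequence defines a homomorphism. This identifies $\Hom_{\F_q}(\oplus_\N \F_q,\F_q^\kappa)$, as a compact topological group, with $(\F_q^\kappa)^\N$; uniqueness of Haar measure on a compact abelian group then forces the Haar measure to be the product of the uniform probability measures on $\F_q^\kappa$. In particular, under the Haar measure the images $h(e_1),\dots,h(e_n)$ are i.i.d.\ uniform on $\F_q^\kappa$.

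Next I would apply rank--nullity. For every $h$,
$$
X \cap V_n \;=\; \Ker(h) \cap V_n \;=\; \Ker(h|_{V_n}),
$$
so $\dim_{\F_q}(X \cap V_n) = n - \rank(h|_{V_n})$. Consequently $\tilde v_{n,k}$ equals the probability that the random linear map $h|_{V_n}\colon V_n \to \F_q^\kappa$ has rank exactly $n-k$. Writing $h|_{V_n}$ as a $\kappa\times n$ matrix $M$ in the standard bases (columns $= h(e_i)$) and combining with the previous paragraph, $M$ is uniformly distributed on the set of all $\kappa\times n$ matrices over $\F_q$, a set of cardinality $q^{\kappa n}$. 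Therefore $\tilde v_{n,k}$ equals the number of $\kappa\times n$ matrices of rank $n-k$ over $\F_q$ divided by $q^{\kappa n}$, as claimed.

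Finally, if $\kappa < n-k$ then $\rank(h|_{V_n}) \le \min(\kappa,n) = \kappa < n-k$, so no matrix $M$ witnesses the event and $\tilde v_{n,k}=0$. There is no serious obstacle: the argument is essentially mechanical, and the only substantive point is the product-measure description of the Haar measure under the canonical identification $\Hom_{\F_q}(\oplus_\N \F_q,\F_q^\kappa) \cong (\F_q^\kappa)^\N$.
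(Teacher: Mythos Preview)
Your argument is correct and follows essentially the same route as the paper: identify $\tilde v_{n,k}$ with the Haar probability that $\dim\Ker(h|_{V_n})=k$, pass to $\Hom_{\F_q}(V_n,\F_q^\kappa)$, identify with $\kappa\times n$ matrices, and apply rank--nullity. The only cosmetic difference is that the paper justifies uniformity of $h|_{V_n}$ by noting that the restriction map $\Hom_{\F_q}(\oplus_\N\F_q,\F_q^\kappa)\to\Hom_{\F_q}(V_n,\F_q^\kappa)$ is a surjective continuous homomorphism of compact groups (hence pushes Haar to Haar), whereas you invoke the product decomposition $(\F_q^\kappa)^\N$ to see the columns are i.i.d.\ uniform; these are equivalent observations.
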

We also give an explicit formula for $\tilde{v}_{n,k}$ in Corollary \ref{computation}.

{\bf Organization}
In \S \ref{sec:prelim}, we go over background material. A first-time reader may choose to skip this section and refer back to it later as need be. In \S \ref{sec:free} we prove Theorem \ref{thm:free}. In \S \ref{sec:homom} - \S \ref{sec:abelian} we prove Theorems \ref{thm:hcA}-\ref{thm:cAn}. %The appendix contains an alternative proof of Theorems \ref{thm:hcA}-\ref{thm:cAn}.  
In \S \ref{GO1} and \S\ref{GO2} we sketch the proofs of Thorems \ref{thm:hcB}-\ref{thm:cBn} and compute the numbers $\tilde{v}_{n,k}$.

{\bf Acknowledgements}
The first two authors would like to thank the organizers of the workshop ``Group Theory, Measure, and Asymptotic Invariants'' at Mathematisches Forschungsinstitut Oberwolfach where this research began. The first author would especially like to thank Miklos Abert for explaining a construction of CRS's in $\cA_p$ where $p$ is prime. The second author would like to thank Mark Sapir for discussions related to fully invariant subgroups and \cite{A}. The second and third authors acknowledge on the support of Institute of Henri Poincare at Paris, as the final work on this note has been done during trimester program  "Random Walks and Asymptotic Geometry of Groups". The second author also acknowledges support of University Paris 6 during the same visit.

\section{Preliminaries}\label{sec:prelim}
As the scope of this paper is on the border between group theory, probability theory and dynamical systems, we recall some basic definitions related to these fields.
 
%  \subsection{The space of subgroups and $\CRS(G)$}
 %For a locally compact second countable group $G$, let $\Sub(G)$ denote the set of all closed subgroups on $G$ with the Chabauty topology. To be precise, a sequence $\{H_i\}_{i=1}^\infty \subset \Sub(G)$ converges to $H_\infty \in \Sub(G)$ if and only if for every compact $K \subset G$, $\lim_{i\to\infty} H_i \cap K = H_\infty \cap K$ in the topology given be the Hausdorff metric on the space of closed subsets of $K$. Here the Hausdorff metric is with respect to any continuous left-invariant metric on $G$. Because $G$ is locally compact and second countable, $\Sub(G)$ is a compact metrizable space. 
 
\subsection{Compact-Open topology and Braconnier topology}
Let $X$ and $Y$ be topological spaces. Denote by $C(X,Y)$ the set of all continuous maps from $X$ to $Y$.
\begin{defn}
{\it The compact-open topology} on $C(X,Y)$ is generated by sets $V(K,U)=\{f\in C(X,Y)\,|\, f(K)\subseteq U\}$, where $K\subseteq X$ is a compact set and $U\subseteq Y$ is an open set.
\end{defn}
Considering this topology on $C(X,Y)$, the following property is well-known
\begin{lem}\label{cont}
Suppose $X$ is locally compact Hausdorff. Then the evaluation map $C(X,Y)\times X\to Y$, given by $(f,x)\mapsto f(x)$, is continuous.
\end{lem}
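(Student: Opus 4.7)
The plan is to verify continuity of the evaluation map $e \colon C(X,Y) \times X \to Y$ pointwise, using the definition of the compact-open topology together with local compactness of $X$ to produce a compact neighborhood where we have uniform control.

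Fix $(f_0, x_0) \in C(X,Y) \times X$ and an open set $U \subseteq Y$ with $f_0(x_0) \in U$. The strategy is to build a subbasic open neighborhood of $f_0$ and an open neighborhood of $x_0$ whose product lands in $U$ under $e$. First, since $f_0$ is continuous, $f_0^{-1}(U)$ is open in $X$ and contains $x_0$. Here local compactness of $X$ (in the Hausdorff sense) enters: every point has a neighborhood base of compact sets, so we may pick a compact set $K \subseteq f_0^{-1}(U)$ whose interior $\mathrm{int}(K)$ contains $x_0$.

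With such a $K$ in hand, consider the subbasic open set $V(K,U) = \{f \in C(X,Y) : f(K) \subseteq U\}$ of the compact-open topology. By construction $f_0 \in V(K,U)$, and $\mathrm{int}(K)$ is an open neighborhood of $x_0$ in $X$. For any $(f,x) \in V(K,U) \times \mathrm{int}(K)$ we have $x \in K$, hence $f(x) \in f(K) \subseteq U$. Thus $e\bigl(V(K,U) \times \mathrm{int}(K)\bigr) \subseteq U$, which shows $e$ is continuous at the arbitrary point $(f_0,x_0)$.

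The only mildly nontrivial step is the extraction of the compact neighborhood $K$ from the open set $f_0^{-1}(U)$, and that is precisely what the local compactness hypothesis on $X$ buys us; without it the argument collapses, since a general topological space need not admit arbitrarily small compact neighborhoods of a point. Everything else is a direct unpacking of the definitions of the compact-open topology and the product topology, so I expect no substantive obstacle.
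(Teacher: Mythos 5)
Your proof is correct. The paper offers no proof of this lemma, merely noting it as well-known, so there is no argument in the paper to compare against; but your argument is the standard textbook one, and every step checks out. You use the right formulation of local compactness for a Hausdorff space (existence of a neighborhood base of compact sets), correctly identify a basic open set $V(K,U)$ in the compact-open topology containing $f_0$, and verify that $V(K,U)\times\mathrm{int}(K)$ maps into $U$ under evaluation. Nothing to fix.
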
 

Suppose now that $G$ is a locally compact second countable group. Denote by $\Aut(G)$ the set of all homeomorphic automorphisms of $G$.  Since $\Aut(G)\subset C(G,G)$, we can endow $\Aut(G)$ with topology by restricting the compact-open topology on $C(G,G)$. However, in this topology, $\Aut(G)$ is not necessarily a topological group.

\begin{defn}
{\it The Braconnier topology} on $\Aut(G)$ is the smallest refinement of the compact-open topology, such that the inverse map $\phi\mapsto \phi^{-1}$ is continuous.
\end{defn}
%We have the obvious lemma
%\begin{lem}\label{brac}
%$\phi_i\to \phi$ in the Braconnier topology if and only if both $\phi_i\to\phi$ and $\phi_i^{-1}\to\phi^{-1}$ in the compact open topology.
%\end{lem}

It is known that for compact or locally connected (in particular discrete) $G$, the Braconnier topology coincides with the compact-open topology. That is, the inverse map on $\Aut(G)$ is automatically continuous in the compact-open topology.
 
 The Braconnier topology turns $\Aut(G)$ into topological group, and from now on we will always consider $\Aut(G)$ with the Braconnier topology.
 
 \begin{lem}\label{eval}
The evaluation map $e:\Aut(G)\times G\to G$ is continuous.
 \end{lem}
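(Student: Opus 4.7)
The plan is to reduce this to Lemma \ref{cont} by observing that the Braconnier topology refines the compact-open topology. More precisely, by definition the Braconnier topology on $\Aut(G)$ is a refinement of the compact-open topology inherited from $C(G,G)$, so the identity map $\Aut(G) \to C(G,G)$ (where the target carries the compact-open topology) is continuous, and thus the inclusion $\Aut(G) \times G \hookrightarrow C(G,G) \times G$ is also continuous.

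Next, I would apply Lemma \ref{cont}: since $G$ is a locally compact Hausdorff space, the evaluation map $C(G,G) \times G \to G$, $(f,x)\mapsto f(x)$, is continuous. Composing the inclusion above with this evaluation gives the evaluation map $e : \Aut(G)\times G \to G$ as a composition of two continuous maps, hence $e$ itself is continuous.

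In short, no new input beyond Lemma \ref{cont} and the definition of the Braconnier topology is needed; the added refinement that makes inversion continuous does no harm because we only require continuity of $e$, not of $e$ composed with inversion. There is no substantive obstacle here. The only subtle point worth mentioning explicitly is that local compactness of $G$ (part of the standing hypothesis on locally compact second countable topological groups, which are in particular Hausdorff) is essential for Lemma \ref{cont} to apply; without it one cannot guarantee continuity of evaluation even in the compact-open topology on $C(G,G)$.
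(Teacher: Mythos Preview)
Your proposal is correct and follows essentially the same argument as the paper: apply Lemma~\ref{cont} using that $G$ is locally compact Hausdorff to get continuity of evaluation for the compact-open topology, then note that this persists under the refinement to the Braconnier topology.
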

 \begin{proof}
 Note that if we consider the compact-open topology on $\Aut(G)$, then $e$ is continuous by Lemma \ref{cont} and the assumption that $G$ is locally compact topological group (hence Hausdorff). It is obvious then that $e$ is continuous for any refinement of the compact-open topology. 
  \end{proof} 
 The next proposition shows that the action of $\Aut(G)$ on $\Sub(G)$ is continuous. First we need a lemma.
 \begin{lem}[Proposition E.1.2 in \cite{BP92}]\label{top_sub}
 Let $G$ be locally compact second countable group. A sequence $\{H_n\} \subset \Sub(G)$ converges to $H \in \Sub(G)$  if and only if two conditions hold: 
 \begin{itemize}
\item[a)] if $g\in G$ and there exist $g_i\in H_{n_i}$ such that $g_i\to g$, then $g\in H$,
\item[b)] if $g\in H$ then there exist $g_n\in H_n$ such that $g_n\to g$.
\end{itemize}
\end{lem}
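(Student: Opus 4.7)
The approach is to prove the two implications separately, working directly with the standard sub-basis of the Chabauty topology. Recall that this sub-basis consists of sets of two types: $\mathcal{O}(K) = \{H \in \Sub(G) : H \cap K = \emptyset\}$ for compact $K \subseteq G$, and $\mathcal{O}'(V) = \{H \in \Sub(G) : H \cap V \neq \emptyset\}$ for open $V \subseteq G$. Since $G$ is locally compact, Hausdorff, and second countable, it is metrizable; fix a compatible metric $d$. The proof then reduces to showing that Chabauty convergence $H_n \to H$ is controlled exactly by sub-basis membership, and that this membership is captured by the two pointwise conditions.

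\textbf{Forward direction.} Assume $H_n \to H$ in the Chabauty topology. For (a), suppose $g_i \in H_{n_i}$ with $g_i \to g$ but $g \notin H$. Since $H$ is closed, I can pick a relatively compact open neighborhood $U$ of $g$ with $\overline{U} \cap H = \emptyset$. Then $H \in \mathcal{O}(\overline{U})$, so eventually $H_n \in \mathcal{O}(\overline{U})$, i.e.\ $H_n \cap \overline{U} = \emptyset$. But $g_i \in U$ for $i$ large, and $g_i \in H_{n_i}$, contradicting this. For (b), fix $g \in H$ and consider the balls $V_k = B(g, 1/k)$. Each $V_k$ meets $H$, so $H \in \mathcal{O}'(V_k)$, and therefore there is $N_k$ such that $H_n \cap V_k \neq \emptyset$ for all $n \geq N_k$. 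Arrange $N_1 < N_2 < \cdots$ and define $g_n \in H_n$ by choosing $g_n \in H_n \cap V_k$ whenever $N_k \leq n < N_{k+1}$ (and $g_n \in H_n$ arbitrarily for $n < N_1$). Then $d(g_n, g) < 1/k$ for $n \geq N_k$, so $g_n \to g$.

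\textbf{Reverse direction.} Assume (a) and (b); it suffices to show that each sub-basic neighborhood of $H$ eventually contains $H_n$. Consider first $\mathcal{O}(K)$ with $H \cap K = \emptyset$. If $H_n \notin \mathcal{O}(K)$ for infinitely many $n$, pick $g_{n_i} \in H_{n_i} \cap K$ along a subsequence. Compactness of $K$ lets me pass to a further subsequence with $g_{n_i} \to g \in K$, and then (a) forces $g \in H$, contradicting $H \cap K = \emptyset$. Next, consider $\mathcal{O}'(V)$ with $V$ open and some $g \in H \cap V$. By (b), choose $g_n \in H_n$ with $g_n \to g$; since $V$ is open, $g_n \in V$ for large $n$, hence $H_n \in \mathcal{O}'(V)$ eventually.

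The main subtlety is in the forward direction (b): one must produce a single sequence $g_n \in H_n$ for \emph{every} $n$ converging to $g$, not merely along a subsequence. This is handled by the diagonal-style construction above, using the metrizability of $G$ to get a countable decreasing neighborhood basis at $g$ and merging the witnesses $H_n \cap V_k \neq \emptyset$ coherently in $n$. Everything else is routine manipulation of the sub-basis, with compactness doing the work needed to extract limit points in the reverse direction.
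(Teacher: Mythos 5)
Your proof is correct. The paper itself does not prove this lemma; it cites it as Proposition E.1.2 of Benedetti--Petronio \cite{BP92}, so there is no in-paper argument to compare against. Your argument is the standard one, working directly from the sub-basis $\{H : H \cap K = \emptyset\}$ ($K$ compact) and $\{H : H \cap V \neq \emptyset\}$ ($V$ open) defining the Chabauty topology, and it is complete: the local compactness of $G$ supplies the relatively compact separating neighborhood in part (a), compactness of $K$ gives the limit point in the reverse direction, and metrizability (which holds since $G$ is locally compact Hausdorff second countable) is used correctly to assemble the full sequence $g_n \to g$ in part (b) rather than merely a subsequence.
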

 
 \begin{prop}\label{cont_eval}
 The map $\Aut(G)\times\Sub(G)\to \Sub(G)$, given by $(\phi,H)\mapsto \phi(H)$, is continuous.
 \end{prop}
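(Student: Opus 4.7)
The plan is to verify continuity sequentially: under the standing hypothesis that $G$ is locally compact second countable, the space $\Sub(G)$ is metrizable (in fact compact metrizable) and $\Aut(G)$ with the Braconnier topology is a Polish topological group, so first-countability allows us to check continuity on sequences. Suppose $\phi_n\to\phi$ in $\Aut(G)$ and $H_n\to H$ in $\Sub(G)$; the goal is to show $\phi_n(H_n)\to\phi(H)$ in $\Sub(G)$, and by Lemma \ref{top_sub} this reduces to verifying conditions (a) and (b) of that lemma.

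For condition (b), take $g\in\phi(H)$, write $g=\phi(h)$ with $h\in H$, and use $H_n\to H$ to pick $h_n\in H_n$ with $h_n\to h$. Then $(\phi_n,h_n)\to(\phi,h)$ in $\Aut(G)\times G$, so Lemma \ref{eval} (continuity of evaluation) gives $\phi_n(h_n)\to\phi(h)=g$, and $\phi_n(h_n)\in\phi_n(H_n)$ is the required approximating sequence. For condition (a), suppose $g_i\in\phi_{n_i}(H_{n_i})$ with $g_i\to g$, and write $g_i=\phi_{n_i}(h_i)$ for some $h_i\in H_{n_i}$, i.e.\ $h_i=\phi_{n_i}^{-1}(g_i)$. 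Here is where the Braconnier topology is essential: by definition, inversion $\phi\mapsto\phi^{-1}$ is continuous, so $\phi_{n_i}^{-1}\to\phi^{-1}$. Applying Lemma \ref{eval} once more to $(\phi_{n_i}^{-1},g_i)\to(\phi^{-1},g)$ yields $h_i\to\phi^{-1}(g)$. Now $h_i\in H_{n_i}$ and $H_n\to H$, so Lemma \ref{top_sub}(a) forces $\phi^{-1}(g)\in H$, whence $g=\phi(\phi^{-1}(g))\in\phi(H)$, as required.

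There is no real obstacle: the only genuine content is identifying which tool to invoke at each step, and the argument would break down if we used only the compact-open topology on $\Aut(G)$, since inversion need not be continuous there and we could not conclude $\phi_{n_i}^{-1}\to\phi^{-1}$ in step (a). Thus the proof is essentially a bookkeeping exercise combining Lemmas \ref{top_sub} and \ref{eval} with the defining property of the Braconnier refinement.
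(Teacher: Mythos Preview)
Your proof is correct and follows essentially the same argument as the paper's own proof: both reduce to the sequential characterization of Chabauty convergence (Lemma \ref{top_sub}), verify conditions (a) and (b) via Lemma \ref{eval}, and use continuity of inversion in the Braconnier topology to handle condition (a). Your write-up is slightly more explicit in justifying why sequential continuity suffices and in flagging where the Braconnier refinement is actually used, but the mathematical content is identical.
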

 \begin{proof}

Suppose that $\phi_n\to \phi$ in the Braconnier topology and $H_n\to H$ in $\Sub(G)$.  We need to show that $\phi_n(H_n)\to \phi(H)$ in $\Sub(G)$.

Note that we have also that $\phi_n^{-1}\to \phi^{-1}$ in the Braconnier topology.

Assume that $g_i=\phi_{n_i}(h_{n_i})\in \phi_{n_i}(H_{n_i})$ and $g_i\to g$. Then $h_{n_i}=\phi^{-1}_{n_i}(g_i)\to \phi^{-1}(g)$, by Lemma \ref{eval}. It follows that $\phi^{-1}(g)\in H$ and hence $g\in \phi(H)$.

Assume now that $g\in \phi(H)$. Then $g=\phi(h)$ for some $h\in H$. It follows that there exist $h_n\in H_n$ such that $h_n\to h$. We then have that $\phi_n(h_n)\to \phi(h)=g$, by Lemma \ref{eval}.

 \end{proof}

 \subsection{Abelian groups and Pontryagin duality}\label{sec:duality}

We will express the group operations in abelian groups additively. For example, if $G$ is a locally compact second countable abelian group, $n \in \N$ and $x\in G$ then $nx = x+ x +\cdots +x$ ($n$ times). Let $\T = \R/\Z$ and $\hat{G} = \Hom(G,\T)$ denote the {\em Pontryagin dual} of $G$. Note that $\Hom(G,\T)$ is itself a group under pointwise addition and it is locally compact under the compact-open topology. If $G$ is discrete then $\hat{G}$ is compact and vice versa. Moreover, $\hat{\hat{G}}=G$.
% Let $\Aut(G)$ denote the group of continuous group automorphisms of $G$ with the compact-open topology. If $G$ is discrete and countable then this topology is the same as the pointwise convergence topology.

%We note the following exercise.
\begin{prop}\label{prop:Pontryagin}
The groups $\Aut(G)$ and $\Aut(\hat{G})$ are naturally isomorphic as topological groups. Indeed the map $\psi \mapsto \hat{\psi}^{-1}$ is a continuous isomorphism (with continuous inverse) where
$$\hat{\psi}(h)(x)=h(\psi(x)).$$
\end{prop}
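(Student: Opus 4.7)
My plan is to treat this in three movements: algebraic (well-definedness and bijectivity), continuity reduction, and the core continuity argument via an exponential law.

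First, I would check that $\Phi(\psi):=\hat\psi^{-1}$ is a group homomorphism. The map $\hat\psi:\hat G\to\hat G$ sending $h\mapsto h\circ\psi$ is clearly a continuous endomorphism (since composition with continuous $\psi$ preserves characters); its inverse is $\widehat{\psi^{-1}}$, so $\hat\psi\in\Aut(\hat G)$. A direct computation using $\hat\psi(h)(x)=h(\psi(x))$ shows
\begin{equation*}
\widehat{\psi_1\psi_2}(h)(x)=h(\psi_1\psi_2(x))=\hat\psi_1(h)(\psi_2(x))=\hat\psi_2(\hat\psi_1(h))(x),
\end{equation*}
i.e.\ $\widehat{\psi_1\psi_2}=\hat\psi_2\hat\psi_1$, whence $\Phi$ reverses-reverses to a homomorphism. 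For bijectivity I would appeal to Pontryagin double duality $\hat{\hat G}=G$: the analogous construction on $\hat G$ yields $\Phi':\Aut(\hat G)\to\Aut(\hat{\hat G})=\Aut(G)$, and one verifies $\Phi'\circ\Phi=\id$ and $\Phi\circ\Phi'=\id$ using that the double-dual functor is the identity on morphisms.

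For continuity I would reduce to showing that the ``raw'' dualization $D:\Aut(G)\to C(\hat G,\hat G)$, $\psi\mapsto\hat\psi$, is continuous when both sides carry the compact-open topology. Granting this, observe that $\Phi(\psi)=\hat\psi^{-1}=\widehat{\psi^{-1}}$ factors as inversion on $\Aut(G)$ (continuous for the Braconnier topology by definition) followed by $D$, so $\Phi$ is continuous from Braconnier to compact-open. The same argument shows $\psi\mapsto\Phi(\psi)^{-1}=\hat\psi=D(\psi)$ is continuous from Braconnier to compact-open. By the defining property of the Braconnier topology on $\Aut(\hat G)$, these two facts together give that $\Phi$ is continuous from Braconnier to Braconnier. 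Continuity of $\Phi^{-1}$ is entirely symmetric, applying the same reasoning to $\Phi'$.

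The key step, and the main technical obstacle, is the continuity of $D$ in the compact-open topologies. My approach is to use the exponential law for mapping spaces. Consider the triple evaluation
\begin{equation*}
E:\Aut(G)\times\hat G\times G\longrightarrow\T,\qquad E(\psi,h,x)=h(\psi(x)).
\end{equation*}
It factors as $(\psi,h,x)\mapsto(h,\psi(x))\mapsto h(\psi(x))$; the first arrow is continuous by Lemma \ref{eval} (continuity of the evaluation $\Aut(G)\times G\to G$), and the second is the evaluation $\hat G\times G\to\T$, which is jointly continuous since $G$ is locally compact Hausdorff (Lemma \ref{cont}). Thus $E$ is continuous. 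Since $G$ is locally compact Hausdorff, the exponential law yields that $E$ corresponds to a continuous map $\Aut(G)\times\hat G\to C(G,\T)=\hat G$, namely $(\psi,h)\mapsto \hat\psi(h)$. Applying the exponential law once more, now using that $\hat G$ is locally compact Hausdorff, converts this joint continuity into continuity of $\psi\mapsto\hat\psi$ from $\Aut(G)$ (compact-open) to $C(\hat G,\hat G)$ (compact-open), which is exactly the continuity of $D$.

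The one subtle point I anticipate is the bookkeeping around the Braconnier topology when combining the two directions of continuity of $\Phi$, and making sure the exponential law is applied in the form valid for our locally compact Hausdorff second countable setting; but no new ideas should be needed beyond the standard facts stated in the preliminaries and elementary Pontryagin duality.
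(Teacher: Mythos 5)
Your proof is correct, and it takes a genuinely different route from the paper's. The paper reduces via the sequential characterization of the Braconnier topology and linearity of $\psi\mapsto\hat\psi$ to showing $\phi_i\to 0$ implies $\hat\phi_i\to 0$ (Lemma \ref{cont_1}), which it establishes through explicit work with the dual neighborhoods $K^*=\{h\in\hat G:h(K)\subset\bar U\}$: a Fourier-analytic argument (Lemma \ref{L1}, using the $L^\infty$ weak* embedding of $\hat G$) shows $K^*$ is compact, and $\sigma$-compactness of $G$ then lets one squeeze $\hat\phi_i(C)$ into a given neighborhood of $0$. Your approach instead curries twice: the triple evaluation $\Aut(G)\times\hat G\times G\to\T$ is jointly continuous by Lemmas \ref{cont} and \ref{eval}, and the forward (currying) direction of the exponential law for compact-open topologies, applied first in the $G$ variable and then in the $\hat G$ variable, yields continuity of $\psi\mapsto\hat\psi$; Braconnier continuity of $\Phi=\psi\mapsto\hat\psi^{-1}=\widehat{\psi^{-1}}$ then follows from the defining property of the Braconnier topology exactly as you say. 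This avoids the Fourier/$L^\infty$ machinery entirely and is in that sense more elementary; the paper's version of Lemma \ref{cont_1} is stated for general $\Hom(G,H)$, but your exponential-law argument would generalize to that setting without changes. One small bookkeeping note: the currying direction of the exponential law that you invoke does not require local compactness of the factor being curried out (that hypothesis is only needed for the uncurrying direction); the place the local compactness hypothesis genuinely enters your argument is in the joint continuity of the evaluation $\hat G\times G\to\T$.
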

\begin{proof}
Note that $\psi_i\to\psi$ in the Braconnier topology if and only if both $\psi_i\to\psi$ and $\psi_i^{-1}\to\psi^{-1}$ in the compact-open topology. 

To put this question in a more general context, let $G,H$ be any locally compact abelian groups. For $\psi \in \Hom(G,H)$ define $\hat{\psi} \in \Hom(\hat{H},\hat{G})$ by
$$\hat{\psi}(\phi)(g) = \phi(\psi(g))\quad \phi\in \hat{H}, g\in G.$$
It now suffices to show that if $\phi_i\to\phi$ in $\Hom(G,H)$ then $\hat{\phi}_i\to \hat{\phi}$ in $\Hom(\hat{H},\hat{G})$ (in the compact-open topology).  Since the map $\psi \mapsto \hat{\psi}$ is linear, this is equivalent to: if $\phi_i\to 0$ then $\hat{\phi}_i\to 0$. Thus the proof is finished as soon as the next lemma is proved.
\begin{lem}\label{cont_1}
$\phi_i\to 0$ implies that $\hat{\phi}_i\to 0$.
\end{lem}
\end{proof}

To prove Lemma \ref{cont_1} we need a sequence of lemmas. 

Let $U=\{z\in\T\,:\,|z|<1/8\}$ be a small neighborhood of $0\in \T$.
\begin{lem}\label{L1}
Let $K\subseteq G$ be a compact subset. Define $K^*=\{h\in \hat{G}\,|\, h(K)\subseteq\bar{U}\}$. Then $K^*$ is a closed compact neighborhood of $0\in\hat{G}$.
\end{lem}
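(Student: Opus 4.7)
The plan is to verify the three assertions---closedness, neighborhood of $0$, and compactness---in turn.

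For closedness, I would observe that for each fixed $k \in G$ the evaluation map $\hat G \to \T$ sending $h \mapsto h(k)$ is continuous. This follows from Lemma \ref{cont}, since $\hat G = \Hom(G,\T) \subseteq C(G,\T)$ carries the compact-open topology and $G$ is locally compact Hausdorff. Hence for each $k \in K$ the set $\{h \in \hat G : h(k) \in \bar U\}$ is the preimage of the closed set $\bar U$ under a continuous map, so it is closed. It follows that $K^* = \bigcap_{k \in K}\{h : h(k) \in \bar U\}$ is closed.

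For the neighborhood claim I would appeal directly to the definition of the compact-open topology on $\hat G$. The basic open set $V(K,U) = \{h \in \hat G : h(K) \subseteq U\}$ is open, contains $0$, and lies inside $K^*$ because $U \subseteq \bar U$. Hence $K^*$ contains an open neighborhood of $0$ and is itself a neighborhood of $0$.

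The main step, and the one I expect to require genuine care, is compactness. My strategy is to view $K^*$ as a subset of the compact Hausdorff space $\T^G$ equipped with the product topology. The conditions $h(k) \in \bar U$ for $k \in K$ are closed in the product topology, and the subset of $\T^G$ consisting of homomorphisms $G \to \T$ is also closed (it is cut out by the closed conditions $f(g+g') - f(g) - f(g') = 0$), so $K^*$ is a closed subset of the compact space $\T^G$ and is therefore compact in the topology of pointwise convergence. To upgrade this to compactness in the compact-open topology---which is the ambient topology on $\hat G$---I would establish that $K^*$ is an equicontinuous family of maps $G \to \T$: the homomorphism identity $h(g+g') = h(g) + h(g')$ combined with the constraint that $\bar U$ is a small (less than quarter-circle) neighborhood of $0$ in $\T$ controls the oscillation of any $h \in K^*$ on a neighborhood of $0$ in $G$, and translation by group elements then propagates this control to all of $G$. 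Once equicontinuity is in hand, the Arzela-Ascoli theorem shows that pointwise convergence and compact-open convergence coincide on $K^*$, so compactness transfers and $K^*$ is compact in $\hat G$. This equicontinuity step is the delicate point, and is precisely where a standard Pontryagin duality argument (e.g.\ \cite[\S23--24]{HR79}) is invoked.
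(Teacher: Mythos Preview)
Your arguments for closedness and for $K^*$ being a neighborhood of $0$ are correct and clean. The equicontinuity step for compactness, however, has a genuine gap: you claim that $h(K) \subseteq \bar U$ ``controls the oscillation of any $h \in K^*$ on a neighborhood of $0$ in $G$,'' but this inference requires $K$ itself to contain a neighborhood of $0$, which the lemma does not assume. Concretely, take $G = \R$ and $K = \{1\}$: identifying $\hat\R \cong \R$ via $h_t(x) = tx \bmod 1$, one has $K^* = \{t : t \bmod 1 \in [-1/8,1/8]\}$, which contains every integer and is neither equicontinuous nor compact. The Arzel\`a--Ascoli argument you cite from Hewitt--Ross is stated there precisely for $K$ a compact \emph{neighborhood} of the identity, and that hypothesis is doing real work.

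The paper's route to compactness is genuinely different: rather than Arzel\`a--Ascoli, it argues by contradiction via Fourier analysis. If $h_n \in K^*$ escapes to infinity in $\hat G$, then using the weak* compactification $\hat G \cup \{0\} \subset L^\infty(G)$ (a Riemann--Lebesgue type fact, see \cite{F95}) one gets $\int_K \exp(2\pi i h_n(g))\,dg \to 0$; but $h_n(K) \subseteq \bar U$ forces this integral to remain within $\frac{\pi}{4}\,\textrm{Haar}(K)$ of $\textrm{Haar}(K)$, a contradiction. Note that this argument also tacitly needs $\textrm{Haar}(K) > 0$---so the lemma as literally stated fails for the example above under either approach---but in the only application (the sets $K_n$ in Lemma \ref{compact} and the proof of Lemma \ref{cont_1}) one may take the $K_n$ with nonempty interior, so this is harmless. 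If you want your Arzel\`a--Ascoli argument to go through, the cleanest fix is to add the hypothesis that $K$ is a compact neighborhood of $0$.
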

\begin{proof}
 The map $G\times\hat{G}\to \T$, given by $(g,h)\mapsto h(g)$ is continuous by Lemma \ref{cont}. We have that $0(K)=0\in U$, and thus $0\in K^*$ and by continuity a neighborhood of $0$ is in $K^*$. Again by continuity $K^*$ is closed.
 
 Suppose that there are $h_n\in K^*$ such that $h_n\to\infty$. We have the inclusion $\hat{G}\subset L^\infty(G)$, and the topology on $\hat{G}$ is the restriction of the weak* topology on $L^\infty(G)=(L^1(G))^*$. Moreover, the closure of $\hat{G}$ in $L^\infty(G)$ is the compact set $\hat{G}\cup \{0\}$ (see \cite[proof of Theorem 4.2]{F95}).  Thus for any $f\in L^1(G)$ we have $\int f(g) \exp(2\pi i h_n(g))\,dg\to 0$. Take $f$ to be the characteristic function of $K$. Then $\int f(g) \exp(2\pi i h_n(g))\,dg=\int_K \exp(2\pi i h_n(g))\,dg $, and so 
 \begin{eqnarray*}
 \left| \int_K \exp(2\pi i h_n(g))\,dg -\textrm{Haar}(K)\right|&\leq& \int_K \left| \exp(2\pi i h_n(g))-1 \right| \,dg \\
 & \leq& \int_K |\exp(\pi i/4) - 1|\,dg \leq \frac{\pi}{4}\textrm{Haar}(K),
\end{eqnarray*}
 a contradiction.
\end{proof}
\begin{lem}
Let $X$ be a topological space. Suppose that $K_n$ are compact, $K_n\supseteq K_{n+1}$ and $\cap_n K_n=\{x\}$. Let $U$ be an open neighborhood of $x$. Then there is $n$ such that $K_n\subset U$.
\end{lem}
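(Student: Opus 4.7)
The plan is to argue by contradiction using the finite intersection property characterization of compactness. Suppose no $K_n$ is contained in $U$, and set $L_n := K_n \setminus U = K_n \cap (X\setminus U)$. Since $X\setminus U$ is closed, each $L_n$ is a closed subset of the compact set $K_n$, and so is itself compact. The sequence $\{L_n\}$ is decreasing, and by the standing assumption each $L_n$ is nonempty.

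Next, I would view the family $\{L_n\}$ inside the compact space $K_1$. Any finite subfamily $L_{n_1}, \dots, L_{n_k}$ has intersection equal to $L_{\max_i n_i}$, which is nonempty, so the family enjoys the finite intersection property. Invoking compactness of $K_1$ (in its subspace topology, so that each $L_n$ is a closed subset of $K_1$) yields
$$\bigcap_n L_n \neq \emptyset.$$
On the other hand,
$$\bigcap_n L_n = \Big(\bigcap_n K_n\Big)\setminus U = \{x\}\setminus U = \emptyset,$$
since $x\in U$. This contradiction gives the conclusion.

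The only technical point requiring attention is that the $L_n$ should be closed in the common compact space $K_1$, which needs compact subsets of $X$ to be closed; this holds in the Hausdorff setting where the lemma is applied (all topological groups in the paper are Hausdorff, and in particular $\hat G$ for locally compact abelian $G$ is Hausdorff). If one wishes to avoid any such assumption on $X$, the same argument can be phrased directly inside $L_1$: pick a point $y_n\in L_n$ for each $n$, extract a cluster point $y\in L_1$ of the net $\{y_n\}$, and observe by the nesting that $y$ is also a cluster point of the tail $\{y_m : m\ge n\}\subseteq L_n$, so $y\in L_n$ for every $n$, yielding the same contradiction. Either way, the argument is short and the only ``obstacle'' is bookkeeping around closedness of compact sets.
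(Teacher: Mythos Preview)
Your proof is correct and follows essentially the same contradiction strategy as the paper: assume $K_n\setminus U\neq\emptyset$ for all $n$ and use compactness to produce a point in $\bigcap_n(K_n\setminus U)$, contradicting $\bigcap_n K_n=\{x\}\subset U$. The only difference is that you invoke the finite intersection property while the paper extracts a convergent subsequence; your formulation is in fact slightly cleaner, since the paper's version tacitly assumes sequential compactness of $K_1$ (harmless in the second-countable Hausdorff setting where the lemma is applied, as you correctly note).
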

\begin{proof}
Suppose that for all $n$ there is $x_n\in K_n-U$. Since $K_n\subseteq K_1$, which is compact, we can choose a subsequence $x_{n_i}\to x_0$. Since $U$ is open and  $x_n\not\in U$, we have that $x_0\not\in U$. On the other hand, for every $m$ we have that $x_0\in K_m$. Indeed, since $K_{n+1}\subseteq K_{n}$, we have that for all $m$ there is an $I$ such that $i>I$ implies $x_{n_i} \in K_m$. Thus $x_0\in \cap_m K_m=\{x\}$, a contradiction.
\end{proof}
%\begin{lem}\label{compact}
%Suppose $K_n\subseteq G,$ $n\in \Z$ are compact subsets, such that $K_n\subseteq K_{n+1}$ for $n\in\Z$, $\cup_n K_n=G$ and $\cap_n K_n=\{0\}$. Then $K^*_n\supseteq K^*_{n+1}$ for $n\in\Z$, $\cap_n K^*_n=\{0\}$ and $\cup_n K^*_n=\hat{G}$.
%\end{lem}
\begin{lem}\label{compact}
Suppose $K_n\subseteq G,$ $n\in \N$ are compact subsets, such that $K_n\subseteq K_{n+1}$ for $n\in\N$ and $\cup_n K_n=G$. Then $K^*_n\supseteq K^*_{n+1}$ for $n\in\N$ and $\cap_n K^*_n=\{0\}$.
\end{lem}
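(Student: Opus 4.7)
The proof should be short, since both claims fall out quickly from the definition of $K^*$ once one observes that $\bar{U}$ is a sufficiently small neighborhood of $0\in\T$.

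First I would dispose of the monotonicity $K_n^*\supseteq K_{n+1}^*$. This is immediate from $K_n\subseteq K_{n+1}$: if $h\in K_{n+1}^*$, meaning $h(K_{n+1})\subseteq \bar{U}$, then \emph{a fortiori} $h(K_n)\subseteq \bar{U}$, so $h\in K_n^*$. Note also that $0\in K_n^*$ for every $n$, so the intersection contains $0$; what remains is to show it contains nothing else.

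Next, suppose $h\in\bigcap_n K_n^*$. For every $g\in G$, the assumption $\bigcup_n K_n=G$ gives some $n$ with $g\in K_n$, and then $h(g)\in\bar{U}$. Hence $h(G)\subseteq\bar{U}$. But $h(G)$ is a subgroup of $\T$, so the proof reduces to the following small fact: the only subgroup of $\T$ contained in $\bar{U}=\{z\in\T:|z|\le 1/8\}$ is the trivial one. This I would verify by recalling that every subgroup of $\T$ is either dense or cyclic of some finite order $n\ge 1$. A dense subgroup cannot lie in the proper closed set $\bar{U}$. A nontrivial finite cyclic subgroup $\langle 1/n\rangle$ with $n\ge 2$ consists of $n$ equally spaced points, one of which lies within $1/(2n)\le 1/4$ of $1/2$ and is therefore outside $\bar{U}$ (whose points all lie within $1/8$ of $0$). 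Thus $h(G)=\{0\}$, i.e.\ $h=0$.

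The combination of these two steps gives $\bigcap_n K_n^*=\{0\}$, completing the lemma. There is no substantive obstacle here; the only point that requires care is the elementary observation that $\bar{U}$ contains no nontrivial subgroup of $\T$, which is exactly why the particular choice $|z|<1/8$ was made in defining $U$.
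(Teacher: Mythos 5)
Your proof is correct and follows the same route as the paper: monotonicity is immediate from $K_n\subseteq K_{n+1}$, and any $h$ in the intersection satisfies $h(G)\subseteq\bar{U}$, forcing $h=0$ since $\bar{U}$ contains no nontrivial subgroup of $\T$. The only difference is that you spell out the elementary fact about subgroups of $\T$ in $\bar{U}$, which the paper states without justification.
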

\begin{proof}
Let $h\in \cap_n K^*_n$. Then $h(G)\subseteq \bar{U}$, and thus, since $\bar{U}$ contains no subgroup of $\T$ beside $\{0\}$, we have that $h(G)=0$, and thus $h=0$.

%Now let $h\in\hat{G}$. We have that $0=h(0)=h(\cap_n K_n)=\cap_n h(K_n)$, since $K_n$ are compact. Since $h(K_n)$ are compact subsets of $\T$ intersecting to $0$, there is $n$ such that $h(K_n)\subset U,$ and so $h\in K^*_n$.
\end{proof}
\begin{proof}[Proof of Lemma \ref{cont_1}]
Suppose that $\phi_i:G\to H$ converges to $0\in \Hom(G,H)$ in the compact-open topology. We need to show that $\hat{\phi}_i: \hat{H}\to \hat{G}$ converges to $0$. Let $C \subset \hat{H}$ be compact and $V \subset \hat{G}$ be an open neighborhood of $0$. It suffices to show that $\hat{\phi}_i(C) \subset V$ for all sufficiently large $i$.

%Equivalently, we need to show that for a compact subset $C\subset \hat{H}$ and an open neighborhood of $0$ $V\subset \hat{G}$, there is $I$ such that $i\geq I$ implies $\hat{\phi}_i(C)\subset V$. 

%Let $V'\subset \hat{G}$ be an open neighborhood of $0$ whose closure is contained in $V$.
 Because $G$ is locally compact second countable, $G$ is $\sigma$-compact. So there exists an increasing sequence of compact subsets $K_n\subset G$ satisfying $\cup_n K_n=G$. By Lemma \ref{compact}, $\cap_n K_n^*=\{0\}$. Since $K^*_n$ are compact, there is an $n_0$ such that $K^*_{n_0}\subset V$. 

In order to show that $\hat{\phi}_i(C)\subset V$ it suffices to check that for each $x\in K_{n_0}$ and $y \in C$,  $\hat{\phi}_i(y)(x)\in \bar{U}$. Indeed, this implies $\hat{\phi}_i(C)\subset K^*_{n_0}\subset V$. 

We have that $\hat{\phi}_i(y)(x)=y(\phi_i(x))$. Therefore it suffices to check that $\phi_i(K_{n_0})\subseteq C^*$. By Lemma \ref{L1}, $C^*$ is a neighborhood of $0$, and thus, since $\phi_i\to 0$, there exists $I$ such that $i\geq I$ implies that $\phi_i(K_{n_0})\subseteq C^*$. 
\end{proof}
Because of Proposition \ref{prop:Pontryagin}, we will not distinguish between $\Aut(G)$ and $\Aut(\hat{G})$.

%\begin{proof}
%This is an exercise.
%\end{proof}

Define $\Ann:\Sub(G) \to \Sub(\hat{G})$ and $\Ker:\Sub(\hat{G}) \to \Sub(G)$ by
$$\Ann(X) = \{ h \in \hat{G}:~ h(x)=0~ \forall x\in X\}$$
$$\Ker(H) = \{x\in G:~h(x)=0~\forall h \in H\}.$$
It is easy to check using Lemma \ref{top_sub} that both $\Ann$ and $\Ker$ are continuous maps. Thus they are continuous inverses of each other and they are $\Aut(G)$-equivariant. So they induce homeomorphisms $\Ann:\Char(G) \to \Char(\hat{G})$, $\Ker:\Char(\hat{G}) \to \Char(G)$ and affine homeomorphisms $\Ann_*: \CRS(G) \to \CRS(\hat{G})$ and $\Ker_*:\CRS(\hat{G}) \to \CRS(G)$. Also $\Ann$ and $\Ker$ are order-reversing in the sense that $H\le K$ implies $\Ann(H) \ge  \Ann(K)$ and similarly with $\Ker$. Lastly, they take addition to intersection and vice versa. To be precise, suppose $H,K$ are subgroups of  $\hat{G}$ and $H',K'$ are subgroups of $G$. Then
\begin{eqnarray}\label{eqn:dual}
\Ker(H+K) = \Ker(H) \cap \Ker(K) &&\quad \Ker(H \cap K) = \Ker(H)+\Ker(K)\\
\Ann(H'+K')= \Ann(H') \cap \Ann(K') &&\quad \Ann(H' \cap K') = \Ann(H')+\Ann(K') \nonumber.
\end{eqnarray}

%A similar statement holds with $\Ann$ in place of $\Ker$. 
%Next we obtain an application of the formulas above. First we need some definitions.

%The equations above imply:
%\begin{lem}\label{lem:duality1}
%For any measure $\lambda \in \CRS(\hcA)$, $\Ker(H_\lambda) = N_{\Ker_*\lambda}$. % where $H_\lambda, N_{\Ker_*\lambda}$ are as in Definition \ref{defn:hull}. 
%\end{lem}

\begin{lem}\label{lem:Ann}
For any $n\ge 2$, $\Ann(n\cA) = (\Z[1/n]/\Z)^\N \le \T^\N$.
\end{lem}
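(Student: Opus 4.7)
The plan is to compute $\Ann(n\cA)$ directly from the Pontryagin pairing between $\cA = \oplus_\N \Z$ and $\hat{\cA} = \T^\N$, which is defined by $y(x) = \sum_i x_i y_i \in \T$ (a finite sum, since $x \in \cA$ has finite support). Recall $n\cA = \{x \in \cA : x_i \in n\Z \text{ for all } i\}$, a characteristic subgroup of $\cA$.

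First I would show one inclusion by testing against the obvious elements of $n\cA$. For each $j \in \N$, the vector $n e_j \in n\cA$ (where $e_j$ is the $j$-th standard basis element), so any $y = (y_i) \in \Ann(n\cA)$ must satisfy $y(n e_j) = n y_j = 0$ in $\T$. Hence each coordinate $y_j$ lies in the $n$-torsion subgroup of $\T$, which equals $(1/n)\Z/\Z$ — the subgroup the paper denotes $\Z[1/n]/\Z$. Thus $\Ann(n\cA) \subseteq (\Z[1/n]/\Z)^\N$.

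For the reverse inclusion, suppose $y \in \T^\N$ has $n y_j = 0$ for every $j$. Take any $x \in n\cA$; by definition $x_j = n z_j$ for some $z_j \in \Z$, with $z_j = 0$ for all but finitely many $j$. Then
\[
y(x) = \sum_j x_j y_j = \sum_j n z_j y_j = \sum_j z_j (n y_j) = 0 \quad \text{in } \T,
\]
so $y \in \Ann(n\cA)$. Combining the two inclusions yields the claimed equality.

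There is no real obstacle here — the only things to be careful about are (i) the finiteness of the support of elements of $\cA$, which ensures the pairing is well-defined and the second computation terminates, and (ii) the identification of the $n$-torsion subgroup of $\T = \R/\Z$ with $\Z[1/n]/\Z$ in the paper's notation (i.e.\ the cyclic group generated by $1/n$ inside $\T$). Everything else is immediate from the definition of the annihilator.
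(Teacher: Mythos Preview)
Your proof is correct and is essentially identical to the paper's own argument: both directions are established by testing against the basis elements $ne_j \in n\cA$ for one inclusion and writing a general $x \in n\cA$ as $x = nz$ for the other. The only cosmetic difference is that the paper proves the inclusions in the opposite order.
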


\begin{proof}
Recall from the introduction that if $x=\{x_i\} \in \cA = \oplus_\N \Z$ and $y =\{y_i\} \in \hcA  = \T^\N$ then $y(x):=\sum_{i\in \N} x_iy_i \in \T$. We have that $x\in n\cA$ if and only there exists $x' \in \cA$ with $x = nx'$. If $x'= \{x'_i\}$ and $y \in (\Z[1/n]/\Z)^\N$ then $y(x) = \sum_{i \in \N} nx'_iy_i = \Z$ because $ny_i = \Z$ for all $i$. This implies $\Ann(n\cA) \ge  (\Z[1/n]/\Z)^\N$. To see the other direction, suppose $y \in \Ann(n\cA)$. Fix $j \in \N$ and let $x=\{x_i\}$ be such that $x_i=n$ if $i=j$ and $x_i=0$ otherwise. Then $x \in n\cA$ so $y(x)= 0 = x_jy_j = ny_j$. Therefore, $y_j \in \Z[1/n]/\Z$. Since $j$ is arbitrary, $y \in (\Z[1/n]/\Z)^\N$. Since $y$ is arbitrary, $\Ann(n\cA) \le(\Z[1/n]/\Z)^\N$. 
\end{proof}

%\begin{lem}\label{lem:automorphisms}
%Because $\hcA_n$ is characteristic in $\hcA$ the inclusion map $i:\hcA_n \to \hcA$ induces a homomorphism $i^*:\Aut(\hcA) \to \Aut(\hcA_n)$. This homomorphism has dense image.
%\end{lem}

%\begin{proof}
%We identify $\hcA$ with the infinite-dimensional torus $\T^\N$ and $\hcA_n$ with $(\Z[1/n]/\Z)^\N$. For any $m\in \N$, $\Aut(\T^m)$ embeds into $\Aut(\hcA)$: it is the subgroup that fixes all $k$-th coordinates for $k>m$. The subgroup $(\Z[1/n]/\Z)^m$ is characteristic in $\T^m$. So the inclusion map $(\Z[1/n]/\Z)^m \to \T^m$ induces a homomorphism $\Aut(\T^m) \to \Aut((\Z[1/n]/\Z)^m)$. The groups $\Aut(\T^m)$ and  $GL(m,\Z)$ are canonically isomorphic as are the groups $\Aut((\Z[1/n]/\Z)^m)$ and $GL(m,\Z/n\Z)$. The following diagram commutes:

%which surjects onto $GL(m,\Z/n\Z)$ which is isomorphic to $\Aut( (\Z[1/n]/\Z)^m)$. In this way we see that $\Aut(\T^m)$ surjects onto $\Aut((\Z[1/n]/\Z)^m)$.

%\end{proof}

% $\Aut(\T^n)$ embeds into $\Aut(\hcA)$, that $\Aut(\T^n)$ surjects $\Aut( (\Z/n\Z)^m)$, that $\cup_n \Aut(\T^n)$ is dense in $\Aut(\hcA)$ and that $\Aut(\hcA)$ has dense image in $\Aut(\hcA_n)$. ***

 \subsection{Characteristic subgroups}
 
Let $\Char(G)$ denote the set of closed characteristic subgroups of $G$. 
\begin{lem}\label{lem:characteristic}
$\Char(\cA)=\{ r\cA:~ r =0,1,2,\ldots\}$, $\Char(\hcA)=\{ \Ann(r\cA):~r=0,1,2,\ldots\}$,  $\Char(\cA_n) = \{0\} \cup \{r\cA_n:~r \mid n, r \in \N\}$, $\Char(\hcA_n) = \{\hcA_n\} \cup \{\Ann(r\cA_n):~r\mid n, r\in \N\}$.
\end{lem}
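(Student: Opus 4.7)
The plan is to classify $\Char(\cA)$ and $\Char(\cA_n)$ directly by exploiting the rich automorphism groups of the discrete groups, and then to pass to the dual statements for $\hcA$ and $\hcA_n$ via Pontryagin duality. For $\Char(\cA)$: each $r\cA$ is obviously characteristic since automorphisms preserve divisibility. For the reverse direction, I would use that $\Aut(\cA)$ contains coordinate permutations and (extensions by the identity of) every $GL_k(\Z)$. Smith normal form implies that any $x\in\cA$ whose nonzero coordinates have gcd $d$ lies in the $\Aut(\cA)$-orbit of $(d,0,0,\ldots)$. So for a nonzero characteristic $H\le\cA$, define $r$ to be the smallest positive content of an element of $H$. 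Hitting such an element with a suitable automorphism puts $(r,0,0,\ldots)$ into $H$, and then permutations yield $re_i\in H$ for all $i$, so $r\cA\subseteq H$. Conversely, for any nonzero $y\in H$ with content $d$, I can move $y$ to $(d,0,0,\ldots)$ by an automorphism, and then subtract off copies of $(r,0,0,\ldots)$ (Euclidean algorithm on the first coordinate); minimality of $r$ forces $r\mid d$, so $y\in r\cA$.

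For $\Char(\hcA)$: invoke Proposition \ref{prop:Pontryagin}, which identifies $\Aut(\cA)$ with $\Aut(\hcA)$, together with the fact established in Section \ref{sec:duality} that $\Ann:\Sub(\cA)\to\Sub(\hcA)$ is an $\Aut$-equivariant homeomorphism. Taking $\Aut$-fixed points on both sides yields $\Char(\hcA)=\Ann(\Char(\cA))=\{\Ann(r\cA):r\in\Z_+\}$.

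For $\Char(\cA_n)$: repeat the $\cA$-argument modulo $n$. The group $\Aut(\cA_n)$ contains each $GL_k(\Z/n\Z)$ acting on the first $k$ coordinates (extended by the identity), and a Smith-normal-form argument over $\Z/n\Z$ shows that the $GL_k(\Z/n\Z)$-orbits in $(\Z/n\Z)^k$ are parametrized by the content $d(x)=\gcd(n,x_1,\ldots,x_k)$, with orbit representative $(d,0,\ldots,0)$. For a characteristic $H\le\cA_n$, each intersection $H_k=H\cap(\Z/n\Z)^k$ is $GL_k(\Z/n\Z)$-invariant, hence equal to $d_k(\Z/n\Z)^k$ for some $d_k\mid n$. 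The compatibility $H_{k+1}\cap(\Z/n\Z)^k=H_k$ forces $d_k=d_{k+1}$, so $H=d\cA_n$ for a single $d\mid n$. Finally, $\Char(\hcA_n)$ follows from Pontryagin duality applied to the compact/discrete pair $(\hcA_n,\cA_n)$ exactly as in the second paragraph.

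I expect the main subtlety to lie not in any single step but in bookkeeping across infinite rank: one has to verify that the finite-rank classifications glue consistently, and that the various $\Ann$ operations (inside $\hcA$ versus inside $\hcA_n$) agree on the relevant subgroups. The latter is handled by the fact that for $r\mid n$ the annihilator of $r\cA_n\le\cA_n$ inside $\hcA_n$ equals $\Ann(r\cA)$ computed inside $\hcA$, since $\Ann(r\cA)\subseteq\Ann(n\cA)=\hcA_n$ in that case. With this identification, the four parts of the lemma fit together cleanly.
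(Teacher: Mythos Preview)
Your proposal is correct. For $\Char(\cA)$ and the two duality steps it is essentially the paper's argument in different language: what you call the \emph{content} of an element is exactly what the paper calls the \emph{co-order}, and your Smith-normal-form/Euclidean-algorithm reasoning is equivalent to the paper's observation that $\Aut(\cA)$ acts transitively on elements of a given co-order and that $r_1\cA+r_2\cA=\gcd(r_1,r_2)\cA$.

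The one place where you take a genuinely different route is $\Char(\cA_n)$. You work intrinsically: intersect a characteristic $H$ with each finite block $(\Z/n\Z)^k$, classify $GL_k(\Z/n\Z)$-invariant subgroups of $(\Z/n\Z)^k$ as $d(\Z/n\Z)^k$, and check the $d_k$'s are constant. The paper instead pulls back through the quotient map $\pi:\cA\to\cA_n$: since $n\cA$ is characteristic in $\cA$, every $\phi\in\Aut(\cA)$ descends to $\cA_n$, so $\pi^{-1}(K)$ is characteristic in $\cA$ for any characteristic $K\le\cA_n$; one then reads off $\pi^{-1}(K)=r\cA$ from the already-proved $\cA$ case and concludes $K=\gcd(r,n)\cA_n$. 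The paper's approach is shorter because it recycles the $\cA$ classification rather than redoing an orbit analysis over $\Z/n\Z$; your approach has the mild advantage of not needing to check that automorphisms descend (you only need that $GL_k(\Z/n\Z)$ embeds in $\Aut(\cA_n)$, which is immediate). Your remark about the two possible $\Ann$'s agreeing when $r\mid n$ is correct and is exactly the identification the paper uses implicitly.
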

\begin{proof}
Let us use the following terminology. An element $g\in \cA$ is {\em primitive} if there exists a subgroup $H\le\cA$ such that $\cA=C\oplus H$ where $C$ is the cyclic subgroup generated by $g$. An element $h\in \cA \setminus \{0\}$ has {\em co-order} $r \ge 1$ if there exists a primitive $g\in \cA$ such that $rg=h$. Equivalently, $h$ has co-order $r$ if, in the standard basis $h=(h_1,h_2,\ldots) \in \oplus_\N \Z$ and $r=\gcd(h_1,h_2,\ldots)$. We observe that $\Aut(\cA)$ acts transitively on primitive elements and therefore acts transitively on elements of co-order $r$ and the subgroup generated by all elements of co-order $r$ is $r\cA$. Thus $r\cA \in \Char(\cA)$ for $r=0,1,\ldots$.

Let $K \le \cA$ be characteristic. If $K$ contains an element of co-order $r$ then it must contain all such elements and so $r\cA \le K$. Since every nonzero element has a co-order, it follows that $K$ is a sum of subgroups of the form $r\cA$.  Since $r_1\cA + r_2\cA = \gcd(r_1,r_2)\cA$, this implies $\Char(\cA)=\{ r\cA:~ r =0,1,2,\ldots\}$. By duality, $\Char(\hcA)=\{ \Ann(r\cA):~r=0,1,2,\ldots\}$. 

Recall that $\cA_n = \cA/n\cA$. Let $\pi:\cA \to \cA_n$ be the quotient map. If $K\le\cA_n$ is characteristic then $\pi^{-1}(K)\le\cA$ is characteristic (because $n\cA$ is chacteristic in $\cA$). So $\pi^{-1}(K) = r\cA$ for some $r=0,1,2,\ldots$ which implies $K=\gcd(r,n)\cA_n$. This classifies characteristic subgroups of $\cA_n$. The result for $\hcA_n$ follows by duality.

\end{proof}

\begin{thm}\label{thm:Adyan}
Every non-abelian free group of finite or countable rank has $2^{\aleph_0}$ many characteristic (in fact, fully invariant) subgroups. Moreover, we can choose these subgroups to lie in the commutator subgroup.
\end{thm}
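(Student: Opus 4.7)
My plan is to apply Adyan's theorem \cite{A} not to $F_r$ itself but to its commutator subgroup, which bypasses any need to reason about whether Adyan's identities already lie in $[F_r,F_r]$. The key observation is that for any nonabelian free group $F=F_r$ (with $r\geq 2$ finite or countably infinite), the commutator subgroup $N:=[F,F]$ is itself a nonabelian free group of countably infinite rank: by the Nielsen--Schreier theorem $N$ is free, and the Schreier index formula applied to the subgroup $F_2\leq F$ shows that already $[F_2,F_2]\leq N$ has infinite free rank, since $F_2/[F_2,F_2]\cong\Z^2$ has infinite index in $F_2$. In particular $N$ is a free group of rank $\geq 2$, so Adyan's theorem applies to $N$.

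By Adyan, $N$ admits $2^{\aleph_0}$ pairwise distinct fully invariant subgroups $\{W_\alpha\}_{\alpha<2^{\aleph_0}}$. I next claim that each $W_\alpha$ is in fact fully invariant in the ambient group $F$. Indeed, $N$ is fully invariant (a fortiori characteristic) in $F$, so every endomorphism $\phi\in\End(F)$ satisfies $\phi(N)\subseteq N$; the restriction $\phi|_N$ is therefore a well-defined endomorphism of $N$, and since $W_\alpha$ is fully invariant in $N$,
\[
\phi(W_\alpha)=\phi|_N(W_\alpha)\subseteq W_\alpha.
\]
Distinctness of the $W_\alpha$ as subgroups of $N$ is obviously preserved when viewed as subgroups of $F$. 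Since $W_\alpha\subseteq N=[F,F]$ by construction, this yields the required $2^{\aleph_0}$ fully invariant (in particular characteristic) subgroups of $F$, all contained in the commutator subgroup.

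I do not anticipate a genuine obstacle in carrying out this plan. The only non-trivial input is Adyan's theorem itself; the upgrade of full invariance from $N$ to $F$ is automatic because every endomorphism of $F$ restricts to an endomorphism of $N$, and the identification of $[F,F]$ as a nonabelian free group of countably infinite rank is standard Nielsen--Schreier bookkeeping. If anything, the most delicate point is keeping the two flavors of ``fully invariant'' straight: the argument works because fully invariant in $N$ implies fully invariant in $F$ for subgroups $W\leq N$, even though in general these two notions are incomparable.
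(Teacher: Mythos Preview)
Your approach is essentially identical to the paper's: both apply Adyan's theorem to obtain $2^{\aleph_0}$ fully invariant subgroups of a free group and then use that a fully invariant subgroup of the (fully invariant) commutator subgroup is fully invariant in the ambient group, thereby placing all the subgroups inside $[F,F]$. One cosmetic slip: the inference ``$[F_2,F_2]\le N$ has infinite rank, hence $N$ has infinite rank'' is invalid for free groups (finitely generated free groups can contain infinite-rank subgroups); the clean argument is simply that $N=[F,F]$ has infinite index in $F$, so Schreier gives infinite rank directly---but in any case you only need $N$ nonabelian, which is immediate.
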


\begin{proof}
This result is a corollary of a famous theorem of Adyan \cite{A}. It can also be derived from \cite{Ol70} although we find it simpler to obtain the result from \cite{A}. We explain the necessary background next.

A subgroup $H\le G$ is {\em fully invariant} if $\varphi(H)\le H$ for every endomorphism $\varphi:G \to G$. This condition is stronger than being characteristic.

 An efficient method for contructing fully invariant subgroups is to use laws.  Let $X=\{x_1,x_2,\ldots\}$ be a countable alphabet whose elements are considered to be variables taking values in a countable group $G$ and let $\F(X)$ denote the free group with generating set $X$. For any word
$$w = x_{a_1}^{b_1}\cdots x_{a_n}^{b_n} \in \F(X)$$
let $f_w:G^n \to G$ be the corresponding {\em word map}:
$$f_w(g_1,\ldots,g_n) = g_{a_1}^{b_1}\cdots g_{a_n}^{b_n}.$$
If $W \subset \F(X)$ is any subset, we let $G(W)\le G$ be the subgroup generated by the union of the images $\cup_{w\in W} f_w(G)$. This subgroup is fully invariant and the quotient group $G/G(W)$ satisfies the laws $w=1$ $(w\in W)$. Every fully invariant subgroup of a free group can be obtained this way \cite[Theorem 12.34]{Neu67}. 

 A word $u \in \F(X)$ is a {\em consequence} of a set $W \subset \F(X)$ if $G(\{u\}) \subset G(W)$ for all $G$. In other words, if any group which satisfies the laws $w=1$ for $w\in W$ necessarily satisfies the law $u=1$. 

Let $\F_\infty$ denote the free group with countable rank. Suppose $u\in \F(X)$, $W \subset \F(X)$ and $\F_\infty(\{u\}) \subset \F_\infty(W)$. We claim that $u$ is a consequence of $W$. To see this, let $G$ be any countable group and let $\phi:\F_\infty \to G$ be a homomorphism with the property that for every nontrivial $g\in G$ there is a generator $h \in \F_\infty$ with $\phi(h)=g$. Then $\phi(\F_\infty(W))=G(W)$ and $\phi(F_\infty(\{u\}))=G(\{u\})$. So $G(\{u\}) \subset G(W)$ which implies $u$ is a consequence of $W$ as claimed. 

A subset $I \subset \F(X)$ is called {\em independent} if there does not exist $u\in I$ such that $u$ is a consequence of $I\setminus \{u\}$. We claim that if $I$ is independent then the subgroups $\{\F_\infty(A):~A \subset I\}$ are distinct and fully invariant (and therefore, characteristic) subgroups. To see this, suppose $A,B \subset I$ and $\F_\infty(A)=\F_\infty(B)$. It suffices to show that $A=B$. Suppose $b \in B \setminus A$. Then $\F_\infty(\{b\}) \le \F_\infty(B) = \F_\infty(A) \le \F_\infty(I \setminus \{b\})$. So $b$ is a consequence of $I \setminus \{b\}$ and therefore $I$ is not independent. This contradiction shows that $B \subset A$. By symmetry $A \subset B$ and so $A=B$ as required.

So for the countable rank case, it suffices to show that there exists an infinite independent subset of $\F(X)$. This problem had been open for a long time before Adyan produced a specific example in \cite{A}. In fact he shows that any odd number $n \ge 4381$, the set
$$I_n=\{ (x_1^{np}x_2^{np}x_1^{-np}x_2^{-np})^n:~ p \textrm{ prime }\}$$
is independent\footnote{Adyan uses the term ``irreducible'' in place of ``independent''.} (and in the book \cite{A79} Adyan proves the same statement for $n\ge 1001$).

This shows that $\F_\infty$ has $2^{\aleph_0}$ many fully invariant subgroups. Observe that for any natural number $r \ge 2$, $\F_\infty$ is isomorphic to the commutator subgroup of $\F_r$, the rank $r$ free group. Any fully invariant subgroup of the commutator subgroup of $\F_r$ is fully invariant as a subgroup of $\F_r$ because the commutator subgroup is itself fully invariant. 

\end{proof}

\subsection{The weak* topology}

Let $X$ be a compact metrizable space. Let $\cP(X)$ denote the set of all Borel probability measures on $X$. This set admits the {\em weak* topology}: a sequence $\{\mu_i\}_{i=1}^\infty \subset \cP(X)$ converges to $\mu_\infty \in \cP(X)$ if and only if: for every continuous function $f \in C(X)$ 
$$\lim_{n\to\infty } \int f~d\mu_i = \int f~d\mu_\infty.$$

By the Banach-Alaoglu Theorem, $\cP(X)$ is compact and metrizable. Moreover, it is a convex subspace of the Banach dual of $C(X)$. Recall that a {\em Choquet simplex} $\Delta$ is a compact convex subset of a locally convex topological vector space with the property that for every $\mu \in \Delta$ there exists a unique Borel probability measure $\theta$ on $\Delta^{e}$, the set of extreme points of $\Delta$, such that $\int \nu~d\theta(\nu) = \mu$. It is easy to see that $\cP(X)$ is a Choquet simplex. Indeed the map $\delta:X \to \cP(X)^e$ defined by $\delta(x)$ is the Dirac probability measure concentrated on $x$, is a homeomorphism. So for any $\mu \in \cP(X)$, $\mu  = \int \nu ~d\delta_*\mu(\nu)$ is the unique extremal decomposition.

Let $G$ be a topological group. An action of $G$ on $X$ is {\em continuous} if the map $(g,x) \mapsto gx$ (from $G\times X$ to $X$) is continuous. A Borel measure $\mu \in \cP(X)$ is {\em $G$-invariant} if $\mu(gA)=\mu(A)$ for every $g\in G$ and Borel $A \subset X$. Let $\cP_G(X) \subset \cP(X)$ denote the subspace of $G$-invariant measures. It is closed in $\cP(X)$ and is therefore compact in the weak* topology. It is also convex, so we may let $\cP^e_G(X) \subset \cP_G(X)$ denote the subspace of extreme points.  $\cP_G(X)$ is a Choquet simplex (see \cite{Ph01}).

 \subsection{Dynamics}\label{2.5}
Let $G$ be a topological group and $(X,\mu)$ a standard probability space. An action of $G$ on $X$ is {\em measurable} if the map $(g,x)\mapsto gx$ (from $G\times X$ to $X$) is measurable. It is {\em probability-measure-preserving} (pmp) if in addition to being measurable, $\mu(gA)=\mu(A)$ for every $g\in G$ and measurable $A \subset X$. In this case, $G$ acts by unitaries (shifts) on the function space $L^2(X)=L^2(X,\mu)$ via the Koopman representation: $g\phi(x):=\phi(g^{-1}x)$. 
 
 \begin{defn}\label{weak}
A pmp action $G \cc (X,\mu)$ is {\em weakly mixing} if $L^2(X)$ has no finite dimensional subspaces which are $G$-invariant, besides the zero subspace and the subspace of constant functions.
 \end{defn}
 
 \begin{defn}\label{ergodic}
A pmp action $G \cc (X,\mu)$ is {\em ergodic} if any $G$-invariant subset of $X$ is either null or conull.
 \end{defn}
We can also reformulate this definition in terms of unitary representations. Indeed, $G \cc (X,\mu)$ is ergodic iff $L^2(X)$ contains no non-constant $G$-invariant vectors. %Equivalently, any $G$-invariant vector in $L^2(X)$ is a constant function.

We list also the properties of weak mixing that we will use. First we remind the definition of a factor system.
\begin{defn}
An $G \cc (Y,\nu)$ is a {\em factor} of $G \cc (X,\mu)$ if there is a $G$-equivariant measurable map $p:X\to Y$ such that $p_*\mu=\nu$, where $p_*\mu$ is defined by $p_*\mu(B)=\mu(p^{-1}B)$ for every measurable $B\subset Y$. In this case $L^2(Y,\nu)$ isometrically and $G$-equivariantly embeds in $L^2(X,\mu)$, by the map $\phi\mapsto \phi\circ p$.
\end{defn}

\begin{lem}\label{properties} Let $G \cc (X,\mu)$ be a pmp action. 
\begin{itemize}
\item[a)] If $H$ is a subgroup of $G$ and the action $H\cc (X,\mu)$ is weakly mixing then $G\cc (X,\mu)$ is weakly mixing. 
\item[b)] A factor of a weakly mixing system is weakly mixing and a factor of an ergodic system is ergodic.
%\item[c)] If both actions $G\cc(X,\mu)$ and $G\cc(X',\mu')$ are weakly mixing then the diagonal action $G\cc(X\times X',\mu\otimes \mu')$ is also weakly mixing.
\end{itemize}
\end{lem}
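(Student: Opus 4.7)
The plan is to read both parts through the $L^2$-characterizations of ergodicity and weak mixing, so that everything reduces to elementary facts about invariant subspaces and isometric embeddings of Hilbert spaces.

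For part (a), I would argue as follows. Suppose the action $G \cc (X,\mu)$ admits a $G$-invariant finite-dimensional subspace $V \subseteq L^2(X,\mu)$ other than $\{0\}$ or the constants. Since $H \leq G$, any $G$-invariant subspace is automatically $H$-invariant, so $V$ would also be a nontrivial finite-dimensional $H$-invariant subspace of $L^2(X,\mu)$ outside the constants. This contradicts the weak mixing of $H \cc (X,\mu)$. Hence no such $V$ exists and $G \cc (X,\mu)$ is weakly mixing by Definition \ref{weak}.

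For part (b), let $p\colon X \to Y$ be the factor map, and use the isometric $G$-equivariant embedding $\iota\colon L^2(Y,\nu) \hookrightarrow L^2(X,\mu)$ defined by $\iota(\phi) = \phi \circ p$ (well-defined because $p_\ast\mu = \nu$). The key observation is that $\iota$ is injective and sends constant functions to constant functions. For weak mixing: if $V \subseteq L^2(Y,\nu)$ is a finite-dimensional $G$-invariant subspace, then $\iota(V) \subseteq L^2(X,\mu)$ is a finite-dimensional $G$-invariant subspace of the same dimension, so by weak mixing upstairs $\iota(V)$ is either zero or the constants; applying $\iota^{-1}$ (on its image) yields the analogous conclusion for $V$. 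For ergodicity: if $\phi \in L^2(Y,\nu)$ is $G$-invariant, then $\iota(\phi) = \phi \circ p \in L^2(X,\mu)$ is $G$-invariant by equivariance of $\iota$, hence constant by ergodicity of $G \cc (X,\mu)$; since $\iota$ is injective and $p$ is surjective on a conull set, $\phi$ itself must be constant $\nu$-almost everywhere.

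There is no substantial obstacle here: both parts are formal consequences of the $L^2$-reformulations of the definitions. The only subtlety worth stating carefully is the verification that $\iota\colon \phi \mapsto \phi \circ p$ really is an isometric, $G$-equivariant embedding taking constants to constants, which is immediate from $p_\ast\mu = \nu$ and the $G$-equivariance of $p$.
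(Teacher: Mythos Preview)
Your proof is correct. The paper states this lemma without proof, treating it as a standard fact; your argument via the $L^2$-characterizations (Definition~\ref{weak} and Definition~\ref{ergodic}) and the isometric $G$-equivariant embedding $\iota:L^2(Y,\nu)\hookrightarrow L^2(X,\mu)$ is exactly the routine verification one would supply, and there are no gaps.
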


 \section{Characteristic random subgroups of free groups}\label{sec:free}

 Let us recall a few definitions and set notation. We let $\F_r$ denote a free group of rank $r$ with $2\le r \le \aleph_0$. (Recall that the {\em rank} of a free group is the cardinality of a free generating set). Keep in mind that every subgroup of a free group is a free group. A subgroup $K\le \F_r$ is {\em characteristic} if $\phi(K)=K$ for every $\phi \in \Aut(\F_r)$.
 
In order to prove Theorem \ref{thm:free}, we will construct a CRS $\lambda$ of $\F_r$ for every characteristic subgroup $K\le \F_r$ which lies in the commutator subgroup (so $K \le  [\F_r,\F_r]$) in such a way that the normal closure $N_\lambda=K$. Theorem \ref{thm:Adyan} implies there are $2^{\aleph_0}$ many such characteristic subgroups and so this will imply Theorem \ref{thm:free}. %While the construction of $\mu_K$ from $K$ is simple, verifying that $\mu_K$ is $\F_r$-weakly mixing is nontrivial and accounts for most of the proof. It is no easier to prove that $\mu_K$ is $\F_r$-ergodic than to prove that $\mu_K$ is $\F_r$-weakly mixing.

%In order to prove that $\mu_K \ne \mu_L$ we will need the following definition:
%\begin{defn}[Normal closure]
%Let $\lambda$ be a conjugation-invariant measure on $\Sub(\F_r)$. The {\em normal closure of $\lambda$}, denoted $N_\lambda$, is the subgroup of $\F_r$ generated by all subgroups in the support of $\lambda$.
%\end{defn}

The next proposition contains the construction of $\lambda$. First we need a definition.
\begin{defn}\label{defn:hull}[HT]
 Let $G$ be a locally compact group and $\lambda \in \IRS(G)$. Recall that the {\em support} of $\lambda$ is the smallest closed subset $X \subset \Sub(G)$ such that $\lambda(X)=1$. The {\em hull} of $\lambda$ is the intersection of all subgroups in the support of $\lambda$. It is a subgroup of $G$, denoted by $H_\lambda$. The {\em normal closure} of $\lambda$ is the smallest closed subgroup of $G$ containing all of the subgroups in the support of $\lambda$. It is a subgroup of $G$ denoted by $N_\lambda$. %This definition is due to Hartman-Tamuz \cite{HT}.
 \end{defn}
 \begin{prop}\label{prop:key}
Let $K\le \F_r$ be an infinite rank characteristic subgroup and $p\ge 2$ be prime. Let $K_p := [K,K]K^p$ be the subgroup of $K$ generated by the commutator $[K,K]$ and the $p$-th powers of elements of $K$. Then there exists a continuous CRS $\lambda$ of $\F_r$ such that $K_p=H_\lambda\le N_\lambda=K$ where $H_\lambda,N_\lambda$ are the hull and normal closure of $\lambda$ (Definition \ref{defn:hull}). Moreover, if the action of $\F_r$ on $\widehat{K/K_p}$ $\cong \hcA_p$ is weakly mixing (with respect to Haar measure) then $\lambda$ is $\F_r$-weakly mixing. The action of $\F_r$ on $\widehat{K/K_p}$ is given by
$$(g \cdot \phi)(f K_p) = \phi(g^{-1}fg K_p)\quad \forall f\in K, g\in \F_r, \phi \in \widehat{K/K_p}.$$
\end{prop}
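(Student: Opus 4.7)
The plan is to let $\lambda$ be the pushforward of Haar measure on $\widehat{K/K_p}$ by the map sending a character to the lift of its kernel. Writing $\pi:K\to K/K_p$ for the quotient, set
$$\Psi:\widehat{K/K_p}\to\Sub(\F_r),\qquad \Psi(\phi):=\pi^{-1}(\ker\phi),\qquad \lambda:=\Psi_*m,$$
where $m$ is Haar measure on the compact group $\widehat{K/K_p}$. Since $K$ has infinite rank and is free (Nielsen--Schreier), $K/K_p$ is the free elementary abelian $p$-group of countable rank, so $\widehat{K/K_p}\cong\hcA_p=(\Z/p\Z)^\N$. The map $\Psi$ is Chabauty-continuous: for $g\in K$ the set $\{\phi:g\in\Psi(\phi)\}=\{\phi:\phi(\pi(g))=0\}$ is the preimage of $\{0\}$ under a continuous character with finite image in $\tfrac{1}{p}\Z/\Z$, hence clopen; for $g\notin K$ it is empty.

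Next I would verify that $\lambda\in\CRS(\F_r)$. Because $K$ is characteristic in $\F_r$ and $K_p$ is defined intrinsically from $K$, $K_p$ is itself characteristic in $\F_r$; hence every $\alpha\in\Aut(\F_r)$ descends to $\bar\alpha\in\Aut(K/K_p)$ satisfying $\pi\circ\alpha=\bar\alpha\circ\pi$ on $K$, and its dual automorphism of $\widehat{K/K_p}$ preserves Haar measure. The identity $\alpha(\pi^{-1}(\ker\phi))=\pi^{-1}(\ker(\phi\circ\bar\alpha^{-1}))$ shows $\Psi$ is $\Aut(\F_r)$-equivariant, so $\lambda$ is $\Aut(\F_r)$-invariant; specializing to inner automorphisms recovers the $\F_r$-action on $\widehat{K/K_p}$ stated in the proposition. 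To see $\lambda$ is continuous (atomless), observe that a fiber $\Psi^{-1}(L)$ has at most $p-1$ elements for $L\neq K$ (two $\F_p$-linear characters share a kernel iff they differ by a nonzero scalar) and is the singleton $\{0\}$ for $L=K$; since $\widehat{K/K_p}$ is infinite and compact, Haar has no atoms, and neither does $\lambda$.

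For the hull and normal closure I use that Chabauty convergence of subgroups of the discrete group $\F_r$ amounts to pointwise convergence of indicators. Every $\Psi(\phi)$ contains $K_p$, and this property is preserved under Chabauty limits, so $K_p\leq L$ for every $L$ in the support of $\lambda$, giving $K_p\leq H_\lambda$. Conversely, for any $k\in K\setminus K_p$ there is a character $\phi$ with $\phi(\pi(k))\neq 0$, so $k\notin\Psi(\phi)$, yielding $H_\lambda\subset K_p$ and hence $H_\lambda=K_p$. For the normal closure, $\Psi(\phi)\subset K$ implies $N_\lambda\leq K$; conversely every $k\in K$ lies in some $\Psi(\phi)$ (take $\phi=0$ if $k\in K_p$; otherwise take any nontrivial linear functional vanishing on the cyclic subgroup generated by $\pi(k)$, which exists because $K/K_p$ has infinite $\F_p$-rank), so $\bigcup_\phi\Psi(\phi)=K$ and hence $N_\lambda=K$.

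The weak-mixing conclusion is then immediate: the $\F_r$-equivariance of $\Psi$ realizes $(\Sub(\F_r),\lambda,\F_r)$ as a measurable factor of $(\widehat{K/K_p},m,\F_r)$, so Lemma \ref{properties}(b) transfers weak mixing from the hypothesis to $\lambda$. I expect the main technical subtlety to be the identification of the hull, where one must pass from the image of $\Psi$ to its Chabauty closure in $\Sub(\F_r)$; the argument goes through only because $\F_r$ is discrete, so $\Sub(\F_r)$ embeds into the Cantor space $\{0,1\}^{\F_r}$ and $K_p\subset L$ is preserved under pointwise limits. Everything else — $\Aut(\F_r)$-invariance, atomlessness, and the factor-map setup for weak mixing — then follows formally from the construction.
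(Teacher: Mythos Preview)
Your proof is correct and follows essentially the same route as the paper's: both push forward Haar measure on $\widehat{K/K_p}$ via $\phi\mapsto\pi^{-1}(\ker\phi)$, verify $\Aut(\F_r)$-invariance through the induced action on $K/K_p$, and deduce weak mixing from the factor map. Your treatment is in places more careful than the paper's---you argue atomlessness via finite fibers rather than infinite $\Aut$-orbits, and you track the Chabauty closure of the image when computing the hull (the paper's description of the support omits the subgroup $K=\Psi(0)$, though this does not affect the conclusion); conversely, your normal-closure step is slightly overworked, since $\Psi(0)=K$ already shows $\bigcup_\phi\Psi(\phi)=K$ without splitting into cases.
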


\begin{proof}
Because $K$ has infinite rank, $K/K_p$ is isomorphic with $\cA_p$ and therefore $\widehat{K/K_p} \cong \hcA_p \cong (\Z/p\Z)^\N$ is compact. Let $X$ be a Haar-random element of $\widehat{K/K_p}$. Because Haar measure is automorphism-invariant (by Halmos \cite{Ha43}), $\Ker(X)\le K/K_p$ is a CRS of $K/K_p$. Recall that $X$ is a homomorphism from $K/K_p$ to $\T$. We let $\Ker(X)$ denote its kernel and let $H\le K$ be the inverse image of $\Ker(X)$ under the quotient map $q:K \to K/K_p$ (so $H = q^{-1}(\Ker(X))$). Then $H$ is a CRS of $K$ and, since $K$ is characteristic, $H$ is a CRS of $\F_r$. Let $\lambda$ denote the law of $H$. 

We claim that $\lambda$ is continuous. Because every non-trivial element of $K/K_p$ has order $p$, $X$ has order $p$ a.s.  Therefore, $\Ker(X)$ has index $p$ in $K/K_p$. The automorphism group of $\cA_p=\oplus_\N (\Z/p\Z)$ acts transitively on the set of all index $p$ subgroups. Therefore, the orbit of $\Ker(X)$ under $\Aut(K/K_p)$ is infinite almost surely. This implies that $\lambda$ is non-atomic.

A subgroup $J\in \Sub(\F_r)$ is in the support of $\lambda$ if and only if $J\le K$ and $J$ has index $p$ in $K$. Because this collection of subgroups generates $K$, $N_\lambda=K$. The intersection of all such subgroups is $K_p=H_\lambda$.

Now suppose that the action of $\F_r$ on $\widehat{K/K_p}$ is weakly mixing. Let $\mu$ denote Haar measure on $\widehat{K/K_p}$ and observe that the map $q^{-1}\Ker:\widehat{K/K_p}\to \Sub(K) \subset \Sub(\F_r)$ is $\F_r$-equivariant (where $\F_r$ acts on $\Sub(K)$ by conjugation) and $q^{-1}\Ker_*\mu = \lambda$. Because weak mixing is preserved under factors, this implies $\lambda$ is $\F_r$-weakly mixing.
\end{proof}

%\begin{remark}
%The construction above also works when $p\ge 2$ is not prime but it is easier to see why $\lambda$ is continuous when $p$ is prime.
%\end{remark}

\begin{lem}\label{lem:mixing}
If $K\le \F_r$ is a characteristic subgroup and $K$ lies inside the commutator subgroup $[\F_r,\F_r]$ then for any prime $p$, the action of $\F_r$ on $\widehat{K/K_p}\cong \hcA_p$ is weakly mixing with respect to the Haar measure on $\widehat{K/K_p}$.
\end{lem}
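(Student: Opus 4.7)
Plan:

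By the Koopman decomposition $L^2(\widehat{K/K_p}) = \bigoplus_{v \in K/K_p} \C\chi_v$, the $\F_r$-conjugation action permutes the characters $\chi_v$ according to the $\F_r$-conjugation action on $K/K_p$, so weak mixing is equivalent to every nonzero $v \in K/K_p$ having infinite orbit. If $v$ had finite orbit, its stabilizer would have finite index; the normal core $H_0 \vartriangleleft \F_r$ would also have finite index; and by Lagrange $a^{[\F_r:H_0]} \in H_0$ would fix $v$ for every $a \in \F_r$. So it suffices to show: for a free generator $a$ of $\F_r$ and any $k \ne 0$, conjugation by $a^k$ has no nonzero fixed vector on $K/K_p$.

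The main input is the Fox--Crowell exact sequence for $\F_r$ (on a free basis indexed by $S$) and the normal subgroup $K$ with quotient $Q = \F_r/K$:
\[
0 \to K^{ab} \xrightarrow{\partial} \Z[Q]^{(S)} \to I(Q) \to 0,
\]
where $K^{ab}= K/[K,K]$ carries the conjugation $Q$-action and $\partial$ is assembled from the Fox derivatives. Using the identity $\partial_s(g k g^{-1}) \equiv \bar{g}\,\partial_s(k) \pmod{K}$ (which follows from the derivation rule $\partial_s(xy) = \partial_s(x) + x\,\partial_s(y)$ and the fact that $gkg^{-1} \in K$ projects to $1$ in $Q$), one verifies that $\partial$ is $\F_r$-equivariant, with $\F_r$ acting on $\Z[Q]^{(S)}$ by left multiplication through $\F_r \twoheadrightarrow Q$. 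Since $I(Q) \subseteq \Z[Q]$ is $p$-torsion free ($\Z[Q]$ being a free abelian group), reducing mod $p$ preserves injectivity and gives an $\F_r$-equivariant embedding $K/K_p \hookrightarrow \F_p[Q]^{(S)}$.

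The rest of the argument is algebra. Because $K \subseteq [\F_r,\F_r]$, any free generator $a$ has image $\bar{a} \in Q$ of infinite order: $a^n \in K$ would imply $a^n \in [\F_r,\F_r]$, forcing $n = 0$ in $\F_r^{ab} = \Z^{(S)}$. For such $\bar{a}$ and any $k \ne 0$, every left $\langle \bar{a}^k\rangle$-orbit on $Q$ is infinite; consequently if $u = \sum_g c_g\, g \in \F_p[Q]$ satisfies $(\bar{a}^k - 1)u = 0$, the coefficients $c_g$ are constant along these infinite orbits and must all vanish by finiteness of support. Thus $\bar{a}^k - 1$ is a non-zero-divisor on $\F_p[Q]$, acts injectively on the free module $\F_p[Q]^{(S)}$, and therefore on the submodule $K/K_p$, ruling out nonzero fixed vectors of $T_{a^k}$. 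The main obstacle will be the careful bookkeeping needed to establish the $\F_r$-equivariance of the Fox embedding and the $p$-purity of $K^{ab}$ inside $\Z[Q]^{(S)}$; the essential role of the hypothesis $K \subseteq [\F_r,\F_r]$ is precisely to guarantee that free generators have infinite order in $\F_r/K$, which is what makes the non-zero-divisor argument go through.
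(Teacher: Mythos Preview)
Your argument is correct and takes a genuinely different route from the paper's. Both proofs begin with the same reduction (via the character decomposition of $L^2(\widehat{K/K_p})$, which is the paper's Lemma~\ref{lem:Petersen}): weak mixing is equivalent to every nonzero $v\in K/K_p$ having infinite $\F_r$-conjugation orbit. Both proofs also isolate the same decisive consequence of the hypothesis $K\subseteq[\F_r,\F_r]$, namely that a free generator $a$ has infinite order in $Q=\F_r/K$. The divergence is in how this is leveraged. The paper proceeds geometrically: it builds, via a Schreier-graph argument (Lemma~\ref{lem:free-basis}), an $\Ad(g_0)$-invariant free basis $\cB$ of $K$ in which every $\Ad(g_0)$-orbit is infinite, and then reads off directly that $\{\Ad(g_0)^m(h)K_p:m\in\Z\}$ is infinite for any $h\notin K_p$. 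You proceed homologically: the Magnus/Crowell embedding $K^{ab}\hookrightarrow\Z[Q]^{(S)}$ via Fox derivatives, together with the torsion-freeness of $I(Q)$, gives an $\F_r$-equivariant injection $K/K_p\hookrightarrow\F_p[Q]^{(S)}$; then the infinite order of $\bar a$ makes $\bar a^k-1$ a non-zero-divisor on $\F_p[Q]$, forcing $\Ad(a^k)$ to have no nonzero fixed vectors on $K/K_p$. Your approach is slicker and imports a standard piece of machinery in place of the explicit basis construction; the paper's approach is more self-contained and yields the $\Ad(g_0)$-invariant basis as a byproduct that may be of independent interest. One minor remark: your normal-core reduction is harmless but unnecessary, since showing that $\Ad(a^k)$ has no nonzero fixed vectors for all $k\neq0$ already gives that the $\langle a\rangle$-orbit of each nonzero $v$ is infinite, hence so is its $\F_r$-orbit.
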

We will prove this lemma in the next section. Let us see now how it implies Theorem \ref{thm:free}:

\begin{proof}[Proof of Theorem \ref{thm:free}]
By Theorem \ref{thm:Adyan} every nonabelian free group admits an uncountable family of characteristic subgroups. The commutator subgroup $[\F_r,\F_r]$ of $\F_r$ is an infinite rank free group. It is also characteristic. Because a characterisic subgroup of a characteristic subgroup is characteristic in the ambient group, Theorem \ref{thm:Adyan} implies that the set of all characteristic subgroups of $\F_r$ that lie in $[\F_r,\F_r]$ has cardinality $2^{\aleph_0}$. The theorem now follows immediately from Proposition \ref{prop:key} and Lemma \ref{lem:mixing}.
\end{proof}

 \subsection{Mixing}
 
 In this subsection we prove Lemma \ref{lem:mixing} after the next two lemmas.
 
 \begin{lem}\label{lem:Petersen}
 Let $G$ be a countable abelian group and $\hat{G}$ its Pontryagin dual. Let $\Gamma \le  \Aut(\hat{G})$ be a subgroup.  Then the action of $\Gamma$ on $\hat{G}$ is weakly mixing with respect to the Haar measure on $\hat{G}$ if and only the action of $\Gamma$ on $G$ has no finite orbits other than the trivial orbit containing the identity. The action of $\Gamma$ on $G$ is induced from the inclusion $\Gamma \le  \Aut(\hat{G})=\Aut(G)$ (see Proposition \ref{prop:Pontryagin} for the identification of $\Aut(\hat{G})$ with $\Aut(G)$).
 \end{lem}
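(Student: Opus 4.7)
My plan is to translate the question into a statement about the Koopman representation of $\Gamma$ on $L^2(\hat G)$ via Pontryagin duality and then reduce it to a combinatorial orbit analysis on $G$.

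First, recall that the characters $\chi_g(h) := h(g)$ for $g \in G$ form an orthonormal basis of $L^2(\hat G, \mathrm{Haar})$. Using the identification $\Aut(\hat G) \cong \Aut(G)$ from Proposition~\ref{prop:Pontryagin}, I would compute directly that the Koopman operator $U_\gamma f(h) := f(\gamma^{-1} h)$ acts on these basis vectors by $U_\gamma \chi_g = \chi_{\gamma\cdot g}$, where $\gamma\cdot g$ denotes the induced action of $\Gamma$ on $G$. Consequently $L^2(\hat G)$ is unitarily isomorphic, as a $\Gamma$-representation, to the permutation representation $\ell^2(G)$, which decomposes as the Hilbert sum $\bigoplus_O \ell^2(O)$ over the $\Gamma$-orbits $O \subset G$; the trivial orbit $\{0\}$ contributes the one-dimensional space of constants $\C\chi_0$.

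By Definition~\ref{weak} I then need to show that $\ell^2(G)$ has no nonzero finite-dimensional $\Gamma$-invariant subspace orthogonal to the constants iff every nontrivial $\Gamma$-orbit on $G$ is infinite. One direction is immediate: if $O$ is a nontrivial finite orbit, then $\ell^2(O)$ is a finite-dimensional $\Gamma$-invariant subspace orthogonal to the constants, so the action fails to be weakly mixing. For the converse, given a finite-dimensional $\Gamma$-invariant $V \subset \ell^2(G)$, the orthogonal projections $\pi_O$ onto the $\Gamma$-invariant summands $\ell^2(O)$ are themselves $\Gamma$-equivariant, so each $\pi_O(V)$ is a finite-dimensional $\Gamma$-invariant subspace of $\ell^2(O)$. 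It therefore suffices to rule out nonzero finite-dimensional $\Gamma$-invariant subspaces inside $\ell^2(O)$ whenever $O$ is infinite.

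The one real technical step is a trace argument. Let $W \subset \ell^2(O)$ be finite-dimensional, $\Gamma$-invariant, with orthogonal projection $p$; then $p$ is self-adjoint, finite-rank, and $\Gamma$-equivariant, and $\{e_g\}_{g\in O}$ is an orthonormal basis of $\ell^2(O)$. Unitarity of $U_\gamma$ together with $\Gamma$-equivariance of $p$ give $\|p e_g\|^2 = \langle p e_g, e_g\rangle = \langle p e_{\gamma\cdot g}, e_{\gamma\cdot g}\rangle$, so this quantity is constant along the transitive orbit $O$. Since $\mathrm{tr}(p) = \dim W$ is finite while $O$ is infinite, the constant must vanish, forcing $p e_g = 0$ for every $g\in O$ and hence $W = 0$. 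Combining the two steps, $V$ lives in the span of the finite-orbit characters; under the hypothesis the only such orbit is $\{0\}$, so $V \subseteq \C\chi_0$, yielding weak mixing. The main obstacle is isolating the trace argument; everything else is routine bookkeeping with the permutation representation.
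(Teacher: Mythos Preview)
Your proof is correct and shares the paper's initial setup (the Fourier basis $\{\chi_g\}_{g\in G}$ and the identification of the Koopman representation with the permutation representation $\ell^2(G)$), but the forward direction---no nontrivial finite orbits implies weak mixing---is argued quite differently.

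The paper uses an asymptotic argument: by a diagonalization it produces a single sequence $\{\gamma_i\}\subset\Gamma$ with $\gamma_i(g)\to\infty$ for every nonzero $g$, then shows directly that $\langle \gamma_i f_1,f_2\rangle \to (\int f_1)(\int f_2)$ for all $f_1,f_2\in L^2(\hat G)$, so that $\gamma_i f$ weak--converges to its mean; unitarity then forces any finite-dimensional invariant subspace into the constants. Your route is instead a static trace argument: project a hypothetical finite-dimensional invariant $V$ onto each orbit block $\ell^2(O)$, observe that the diagonal entries $\langle p e_g,e_g\rangle$ of the finite-rank projection are constant along the transitive orbit, and conclude they vanish whenever $O$ is infinite since their sum is $\mathrm{tr}(p)=\dim W<\infty$. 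Your argument is more elementary---it avoids the diagonalization and the weak-convergence step---and pinpoints exactly why finite-dimensional subrepresentations cannot live over an infinite orbit. The paper's argument, on the other hand, actually establishes a mixing-type asymptotic decay of matrix coefficients along a subsequence, which is a bit more than what the lemma asks for; and its backward direction exhibits a nonconstant \emph{invariant function} (so non-ergodicity), which is marginally stronger than your observation that $\ell^2(O)$ is a finite-dimensional invariant subspace.
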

 
 \begin{proof}
 This lemma is well-known. Probably the first time a version of it appeared is in \cite[Theorem 1]{Ha43}. There is also a version in \cite[Theorem 5.7]{Pe83}. For the sake of completeness we give the argument here. Let $\mu$ denote the normalized Haar measure on $\hat{G}$. This measure is $\Aut(\hat{G})$-invariant and therefore, $\Gamma$-invariant. For $g\in G$, let $\chi_g \in L^2(\hat{G},\mu)$ be the function $\chi_g(h) = \exp(2\pi i h(g))$. It is well-known that $\{\chi_g:~g\in G\}$ is an orthonormal basis of $L^2(\hat{G},\mu)$. 
 
Suppose that the action of $\Gamma$ on $G$ has no nontrivial finite orbits. Then for every nonzero $g\in G$ there is a sequence $\{\gamma_i\}_{i=1}^\infty \subset \Gamma$ such that $\lim_{i\to\infty} \gamma_i(g)=+\infty$ in the sense that for every finite subset $F \subset G$, $\gamma_i (g) \notin F$ for all sufficiently large $i$. By a diagonalization argument, there exists such a sequence $\{\gamma_i\}_{i=1}^\infty \subset \Gamma$ such that $\lim_{i\to\infty} \gamma_i(g)=+\infty$ for every nonzero $g\in G$.

Let $g,h \in G$ be nonzero elements. We let $\Gamma$ act on $L^2(\hat{G},\mu)$ through the Koopman representation.  Then 
$$\langle \gamma_i \chi_g, \chi_h\rangle = \langle \chi_{\gamma_i(g)}, \chi_h \rangle $$
tends to 0 as $i\to\infty$ where $\langle \cdot, \cdot \rangle$ denotes the inner product in $L^2(\hat{G},\mu)$. This is because $\gamma_i g$ is eventually not equal to $h$ and $\{\chi_g:~g\in G\}$ is an orthonormal basis. It follows that if $g,h \in G$ are arbitrary then
$$\lim_{i \to \infty} \langle \gamma_i \chi_g, \chi_h\rangle =\left( \int \chi_g~d\mu\right)\left(\int \chi_h~d\mu\right).$$
Because the span of $\{\chi_g:~g\in G\}$ is dense in $L^2(\hat{G},\mu)$, 
$$\lim_{i \to \infty} \langle \gamma_i f_1, f_2 \rangle =\left( \int f_1~d\mu\right)\left(\int f_2~d\mu\right)$$
for every $f_1,f_2 \in L^2(\hat{G},\mu)$. In other words, for every $f\in L^2(\hat{G},\mu)$, $\gamma_i f$ limits to the constant $\int f d\mu$ in the weak topology on $L^2(\hat{G},\mu)$ as $i\to\infty$. Since $\Gamma$ is acting unitarily on $L^2(\hat{G},\mu)$, this implies that the only finite-dimensional $\Gamma$-invariant subspace consists of the constants. So $\Gamma \cc \hat{G}$ is weakly mixing.

On the other hand, if $\{g_1,\ldots, g_n\}$ is a nontrivial finite orbit of the $\Gamma$-action on $G$ then $\chi_{g_1}+\cdots + \chi_{g_n}$ is a nonconstant $\Gamma$-invariant function on $\hat{G}$. Therefore the action of $\Gamma$ on $\hat{G}$ is non-ergodic. In particular, it is not weakly mixing.
 \end{proof}

\begin{notation}
For $g\in \F_r$, we let $\Ad(g) \in \Aut(\F_r)$ denote the inner automorphism defined by $\Ad(g)(x)=gxg^{-1}$. More generally, if $C \le  B$ are normal subgroups of $\F_r$ then we also let $\Ad(g) \in \Aut(B/C)$ be the automorphism $\Ad(g)(bC) = gbg^{-1}C$. 
\end{notation}

Recall that an element $g_0 \in \F_r$ is {\em primitive} if there exists a free generating set $S \subset \F_r$ that contains $g_0$.
\begin{lem}\label{lem:free-basis}
Let $K\le \F_r$ be an infinite rank normal subgroup. Suppose there is a primitive element $g_0\in \F_r$ such that $g_0K$ has infinite order in $\F_r/K$. Then there exists a free generating set $\cB$ for $K$  that is $\Ad(g_0)$-invariant. Moreover, every $\Ad(g_0)$-orbit in $\cB$ is infinite.
\end{lem}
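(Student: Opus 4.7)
The plan is to use covering space theory. Extend $g_0$ to a free generating set $\{g_0\} \cup \{g_i\}_{i \in I}$ of $\F_r$ (possible since $g_0$ is primitive) and form the wedge of circles $X$ with one petal per generator, so $\pi_1(X,x_0) \cong \F_r$. Let $p \colon Y \to X$ be the regular cover corresponding to $K$, so $\pi_1(Y,y_0) = K$ and the deck group $\Aut(Y/X) \cong Q := \F_r/K$ acts freely on $Y$. Concretely, $Y$ has vertex set $Q$ (basepoint $y_0 = 1_Q$), and for each $q \in Q$ and generator $s$ there is a labeled edge from $q$ to $q \cdot \pi(s)$, with $Q$ acting by left multiplication. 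Writing $q_0 = \pi(g_0)$ and $D = \langle q_0 \rangle \le Q$, the hypothesis on $g_0 K$ says $D \cong \Z$.

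The key step is to build a $D$-invariant spanning tree $T_Y$ of $Y$. I will pass to the intermediate quotient $Y' = Y/D$, which is connected since $Y$ is. Pick any spanning tree $T' \subset Y'$, and let $e_0$ be the $g_0$-labeled edge of $Y'$ based at $[1_Q]$; since $[q_0] = [1_Q]$ this edge is a loop, and since $T'$ contains no loops, $e_0 \notin T'$. Set $U' := T' \cup \{e_0\}$. Then $\pi_1(U')$ is infinite cyclic, generated by the loop through $e_0$, and the holonomy map $\pi_1(U') \to D$ induced by $Y \to Y'$ sends this generator to $q_0$, hence is an isomorphism. Standard covering-space facts then force $T_Y := p_D^{-1}(U')$, where $p_D \colon Y \to Y'$ is the quotient map, to be simultaneously connected, simply connected, and spanning --- hence a spanning tree of $Y$, manifestly $D$-invariant.

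From $T_Y$ one reads off a free basis $\cB$ of $K$ in the usual way: for each non-tree edge $e$ from $u$ to $v$, take the loop $\alpha_u \cdot e \cdot \alpha_v^{-1}$, where $\alpha_w$ is the unique tree path from $y_0$ to $w$. Because $T_Y$ is $D$-invariant, the deck transformation $\phi_{q_0}$ permutes the non-tree edges. The key observation is that the $g_0$-edge in $Y$ from $y_0$ to $q_0 y_0$ is itself a tree edge (it lifts $e_0$), so it coincides with $\alpha_{q_0 y_0}$; a short calculation then identifies the induced permutation $e \mapsto \phi_{q_0}(e)$ on $\cB$ with $\Ad(g_0)$ acting via the embedding $K \hookrightarrow \F_r$. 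Finally, deck transformations act freely on edges of $Y$ (a labeled edge fixed by $\phi_{q_0}$ would force $q_0^2 = 1$, contradicting $D \cong \Z$), so every $D$-orbit on non-tree edges, and hence every $\Ad(g_0)$-orbit on $\cB$, is infinite.

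The main obstacle will be building the $D$-invariant spanning tree: the naive preimage $p_D^{-1}(T')$ is a disjoint union of $|D|$ copies of $T'$, and one must adjoin exactly one $D$-orbit of edges whose holonomy generates $D$ in order to connect them into a tree. The role of $e_0$ is precisely to supply such an orbit, and primitivity of $g_0$ --- used to put $g_0$ into a free basis of $\F_r$ --- is what guarantees that $g_0$ appears as a loop (rather than as part of a longer word) in $Y'$.
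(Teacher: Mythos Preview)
Your proof is correct and follows essentially the same route as the paper's. The paper phrases everything in terms of Schreier right-coset graphs rather than covering spaces: it forms $\Sch(K\backslash \F_r,S)$ (your $Y$) and $\Sch(J\backslash \F_r,S)$ with $J=\langle K,g_0\rangle$ (your $Y'=Y/D$), chooses a spanning tree downstairs, and builds the $\Ad(g_0)$-invariant spanning tree upstairs by adjoining the infinite path of $g_0$-labeled edges through the basepoint---exactly your $p_D^{-1}(e_0)$. One small imprecision: to conclude that every $D$-orbit on edges is infinite you need that $\phi_{q_0^n}$ fixes no edge for every $n\neq 0$, not just $n=1$; the argument is the same (a fixed edge would force $q_0^{2n}=1$, hence $n=0$ since $D\cong\Z$), but your parenthetical ``$q_0^2=1$'' only treats $n=1$.
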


\begin{proof}
Because $g_0$ is primitive there exists a free generating set $S \subset \F_r$ that contains $g_0$. Let $J$ be the subgroup of $\F_r$ generated by $K$ and $g_0$.  Let $\Sch(J\backslash \F_r,S)$ denote the Schreier right-coset graph of $J \backslash \F_r$: its vertex set is $J \backslash \F_r$ and for every coset $Jh \in J \backslash \F_r$ and every $s\in S$ there is an edge from $Jh$ to $Jhs$ labeled $s$. 

Recall that a subgraph of a graph is {\em spanning} if it contains all the vertices. It is a {\em tree} if it is simply-connected and it is a {\em forest} if each of its connected components is a tree. Because $\Sch(J\backslash \F_r,S)$ is a connected graph, there exists a spanning tree $T \subset \Sch(J\backslash \F_r,S)$. 

Now consider the Schreier right-coset graph $\Sch(K\backslash \F_r,S)$. The quotient map $\pi:K \backslash \F_r \to J \backslash \F_r$ determines a covering map from $\Sch(K\backslash \F_r,S)$ onto $\Sch(J\backslash \F_r,S)$, also denoted by $\pi$. Observe that $\pi^{-1}(T)$ is a spanning forest of $\Sch(K\backslash \F_r,S)$.

Let $T' \subset \Sch(K\backslash \F_r,S)$ be the union of $\pi^{-1}(T)$ with all edges of the form $\{Kg_0^m, Kg_0^{m+1}\}$ for $m \in \Z$. We claim that $T'$ is a spanning tree of $\Sch(K\backslash \F_r,S)$. It is spanning because $\pi^{-1}(T)$ is spanning. To see that $T'$ is connected, we will construct a path in $T'$ from an arbitrary coset $Kx \in K \backslash \F_r$ to the identity coset $K \in K \backslash \F_r$. Because $T$ is a spanning tree there exists a path in $T$ from $Jx$ to $J$. We observe that $\pi^{-1}(J) = \{Kg_0^m:~m\in \Z\}$. Therefore this path lifts to a path in $\pi^{-1}(T)$ from $Kx$ to $Kg_0^m$ for some $m\in \Z$. We may then append to this path all edges of the form $(Kg_0^i, Kg_0^{i+1})$ for $0\le i \le m-1$ (if $m\ge 0$) to obtain the required path. The case $m \le 0$ is similar.

To see that $T'$ is simply connected it suffices to show that the connected component of $\pi^{-1}(T)$ intersects the infinite path $p=\{ \{Kg_0^m, Kg_0^{m+1}\}:~m\in \Z\}$ in exactly one vertex. Indeed suppose that $Kg_0^n \ne Kg_0^m$ are in the same component of $\pi^{-1}(T)$. Let $q$ be a simple path from $Kg_0^n$ to $Kg_0^m$ in $\pi^{-1}(T)$. Since $\pi$ is a covering map, $\pi(q)$ is a simple path from $J=\pi(Kg_0^m)$ to $J=\pi(Kg_0^n)$. However, this contradicts that $\pi$ is a covering map and $T$ is a tree. This shows that $T'$ is a spanning tree as required.

Next we observe that $T'$ is invariant under left-multiplication by $g_0$ (here we are using the fact that $K$ is normal, so multiplication on the left is well-defined). This follows from the observation that 
$$\pi(g_0Kx)=\pi(Kg_0x)=Jx=\pi(Kx)$$
for any $x$ and the path $p$ is also left-$g_0$-invariant.

 Let $E$ be the set of all edges in $\Sch(K\backslash \F_r,S)$ that are not in $T'$. Because $T'$ is left-$g_0$-invariant, so is $E$. Choose an orientation for every edge in $E$ so that the left-action of $g_0$ on $E$ preserves orientations.

For every vertex $v$ of $\Sch(K\backslash \F_r,S)$, let $p_v$ be the unique oriented path in $T'$ from the identity coset $K$ to $v$.
For every oriented edge $e=(v,w) \in E$ let $p_e=p_v\cdot e\cdot p_w^{-1}$ denote the concatenation of $p_v$, $e$ and $p_w^{-1}$. Because $p_e$ is a circuit based at $K$, reading off its edge labels gives an element $k_e \in K$. Moreover, by a well-known result of Schreier, $\cB:=\{k_e:~ e\in E\}$ is a free basis for $K$. Observe that $k_{g_0e} = \Ad(g_0)(k_e)$ for any $e\in E$. Because $E$ is left-$g_0$-invariant, this basis is $\Ad(g_0)$-invariant. Because $Kg_0$ has infinite order in $K\backslash \F_r$, it follows that for every $e \in E$, $\{g_0^ne\}_{n\in \Z}$ is infinite. Therefore, every $\Ad(g_0)$-orbit in $\cB=\{k_e:~e\in E\}$ is also infinite.
\end{proof}

\begin{proof}[Proof of Lemma \ref{lem:mixing}]
Let $K\le \F_r$ be a characteristic subgroup that lies in the commutator $[\F_r,\F_r]$. Then there exists a primitive element $g_0 \in \F_r$ such that $g_0K$ has infinite order in $\F_r/K$. For instance, any element of a basis of $\F_r$ satisfies this property. By Lemma \ref{lem:free-basis} there exists a $\Ad(g_0)$-invariant free basis $\cB$ for $K$ such that every $\Ad(g_0)$-orbit in $\cB$ is infinite. 

Let $hK_p \in K/K_p$ be a nonidentity element. By Lemma \ref{lem:Petersen}, it suffices to prove that $\{f hf^{-1}K_p:~f\in \F_r\}$ is infinite. Because $\cB$ is a free basis, $h=b_1^{e_1}\cdots b_n^{e_n}$ for some $b_1,\ldots, b_n \in \cB$ and exponents $e_i \in \Z \setminus \{0\}$ such that $b_{i+1} \ne b_i$ for $1\le i \le n-1$. Then
$$\Ad(g_0)^m(h) = (\Ad(g_0)^m(b_1))^{e_1}\cdots (\Ad(g_0)^m(b_n))^{e_n}.$$
Because the basis $\cB$ is $\Ad(g_0)$-invariant, each nontrivial $\Ad(g_0)$-orbit in $\cB$ is infinite, and the image $\bar{\cB}$ of $\cB$ in $K/K_p$ is a basis for the elementary abelian $p$-group $K/K_p$, it follows that $\{\Ad(g_0)^m(h)K_p:~m\in \Z\} \subset K/K_p$ is infinite. This proves the lemma.

\end{proof}

  \section{Random homomorphisms}\label{sec:homom}

Suppose $G$ is a countable abelian group and $K$ is a compact abelian group. Let $\Hom(G,K)$ denote the space of all homomorphisms from $G$ to $K$ with the topology of pointwise convergence. This set naturally embeds into the product space $K^G$ as a closed set. By Tychonoff's Theorem, $K^G$ is compact. Therefore, $\Hom(G,K)$ is compact. It is also an abelian group under pointwise addition and the group structure is compatible with its topology. For any subgroup $H\le G$ and closed subgroup $L\le K$ there is a natural embedding of $\Hom(G/H,L)$ into $\Hom(G,K)$. Indeed, we may identify $\Hom(G/H,L)$ with the set of homorphisms of $\phi:G \to K$ such that $\phi(G) \le L$ and $H\le \Ker(\phi)$. Also $\Hom(G/H,L)$ is a closed subgroup of $\Hom(G,K)$. 

The group $\Aut(K)$ acts on $\Hom(G,K)$ by 
$$(\psi_* h)(g) = \psi(h(g))\quad \forall \psi \in \Aut(K), h\in \Hom(G,K), g\in G.$$
A key step in the proof of Theorems \ref{thm:hcA}-\ref{thm:cAn} is the next result: 
\begin{thm}\label{thm:key2}
Let $G$ be a countable group and let $\chi$ be an $\Aut(\hcA)$-invariant and indecomposable Borel probability measure on $\Hom(G,\hcA)$. Then  there exist a subgroup $H\le G$ such that $\chi$ is the normalized Haar measure on $\Hom(G/H,\hcA)$.
\end{thm}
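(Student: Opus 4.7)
My plan is to combine de Finetti--Hewitt--Savage with additional constraints from transvections in $\Aut(\cA)$ and the indecomposability hypothesis. First identify $\Hom(G,\hcA)$ with $\hat G^\N$ via $\hcA=\T^\N$. By Proposition \ref{prop:Pontryagin}, $\Aut(\hcA)\cong\Aut(\cA)$, and this group contains both the symmetric group $S_\infty$ (acting by permutations of the $\N$ coordinates) and, for each $i\neq j$, the transvection $\tau_{ij}\in\Aut(\cA)$ defined by $\tau_{ij}(e_i)=e_i+e_j$ and $\tau_{ij}(e_k)=e_k$ for $k\neq i$. A direct computation on the dual shows that $\tau_{12}$ acts on $\hat G^\N$ by $(h_1,h_2,h_3,\ldots)\mapsto(h_1+h_2,h_2,h_3,\ldots)$, affecting only the first coordinate.

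Since $\chi$ is $S_\infty$-invariant, Hewitt--Savage yields a unique Borel probability measure $\theta$ on $\mathcal{P}(\hat G)$ and a ``directing random measure'' $\nu$, measurable with respect to the tail $\sigma$-algebra $\mathcal{T}$ of the coordinate process, such that $\chi=\int\nu^\N\,d\theta(\nu)$ and, conditionally on $\nu$, the coordinates $h_i$ are i.i.d.\ $\nu$. Because $\tau_{12}$ only alters the first coordinate, it preserves $\mathcal{T}$ and hence fixes $\nu$. The invariance $\tau_{12,*}\chi=\chi$ therefore descends to conditional distributions, giving $\tau_{12,*}(\nu^\N)=\nu^\N$ for $\theta$-a.e.\ $\nu$. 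Inspecting the joint marginal on coordinates $1$ and $2$ yields $(X_1,X_2)\stackrel{d}{=}(X_1+X_2,X_2)$ for i.i.d.\ $X_i\sim\nu$, so $\nu*\nu=\nu$. By the classical theorem on idempotent probability measures on a compact group, $\nu$ is the normalized Haar measure on a closed subgroup $L_\nu\le\hat G$, for $\theta$-a.e.\ $\nu$.

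Finally, push $\theta$ forward via $\nu\mapsto L_\nu$ to a Borel probability measure $\tilde\theta$ on closed subgroups of $\hat G$, so that $\chi=\int \mu_L^\N\,d\tilde\theta(L)$, where $\mu_L^\N$ is Haar measure on $L^\N\le\hat G^\N$. The explicit matrix formula for the $\Aut(\cA)$-action on $\hat G^\N$ shows that $L^\N$ is $\Aut(\hcA)$-invariant as a closed subgroup, and Haar measure on an invariant compact subgroup is preserved by automorphisms (Halmos), so each $\mu_L^\N$ is $\Aut(\hcA)$-invariant. Indecomposability of $\chi$ forces $\tilde\theta=\delta_{L_0}$ for a single closed subgroup $L_0$: otherwise, splitting the integral over a Borel subset of $\Sub(\hat G)$ would exhibit $\chi$ as a nontrivial convex combination of distinct $\Aut(\hcA)$-invariant measures. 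Setting $H=\Ann(L_0)\le G$ via Pontryagin duality gives $L_0^\N=\Hom(G/H,\hcA)$, so $\chi$ is the normalized Haar measure on $\Hom(G/H,\hcA)$. The main obstacle is the conditional argument in the middle paragraph: one must verify that $\nu$ is $\mathcal{T}$-measurable and genuinely fixed by $\tau_{12}$, so that invariance of $\chi$ produces pointwise (not merely averaged) invariance of $\nu^\N$ under $\tau_{12}$.
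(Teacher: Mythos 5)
Your proposal is correct, and it follows the paper's overall architecture (de Finetti--Hewitt--Savage to reduce to a directing measure $\nu$, then additional $\Aut(\hcA)$-invariance to pin down $\nu$, then indecomposability to eliminate the mixture) but replaces the middle step with a cleaner device. The paper uses \emph{two} automorphisms --- the coordinate negation $(x_1,x_2,\ldots)\mapsto(-x_1,x_2,\ldots)$ and the transvection $(x_1,x_2,\ldots)\mapsto(x_1+x_2,x_2,\ldots)$ --- to deduce, respectively, that $\mu$ is inversion-invariant and that $(A_z)_*\mu=\mu$ for $\mu$-a.e.\ $z$; it then upgrades the latter to all $z$ in $\supp\mu$ by a density/continuity argument, concludes that $\supp\mu$ is a closed subgroup, and hence that $\mu$ is Haar. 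You instead extract the single identity $\nu*\nu=\nu$ from the transvection alone (by projecting $\tau_{12,*}(\nu^\N)=\nu^\N$ onto the first coordinate) and invoke the Kawada--Ito theorem that idempotent probability measures on a compact group are exactly Haar measures of closed subgroups. This is genuinely shorter: it dispenses with the negation automorphism and with the support/density argument, at the cost of citing Kawada--Ito.

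The obstacle you flag at the end --- that $\tau_{12,*}\chi=\chi$ must be turned into the \emph{pointwise} statement $\tau_{12,*}(\nu^\N)=\nu^\N$ for $\theta$-a.e.\ $\nu$, not merely an equality after averaging over $\theta$ --- is a real subtlety, but it is resolvable exactly as you suggest. Since the directing measure $\nu$ is measurable with respect to $\sigma(h_2,h_3,\ldots)$ (the exchangeable $\sigma$-algebra of an i.i.d.\ sequence coincides with the tail, which is contained in $\sigma(h_2,h_3,\ldots)$) and $\tau_{12}$ acts as the identity on $(h_2,h_3,\ldots)$, the function $\omega\mapsto\nu(\omega)$ is literally $\tau_{12}$-invariant. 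Pushing $\chi$ forward under $\omega\mapsto(\nu(\omega),\omega)$ gives $\int\delta_\nu\otimes\nu^\N\,d\theta(\nu)$, and pushing forward under $\omega\mapsto(\nu(\omega),\tau_{12}\omega)=(\nu(\tau_{12}\omega),\tau_{12}\omega)$ gives, by $\tau_{12}$-invariance of $\chi$, the same measure, which simultaneously equals $\int\delta_\nu\otimes\tau_{12,*}(\nu^\N)\,d\theta(\nu)$. Comparing disintegrations over the first factor gives $\tau_{12,*}(\nu^\N)=\nu^\N$ for $\theta$-a.e.\ $\nu$, as needed. The same care is implicitly required in the paper's own phrasing (``Because we know that $h_2,h_3,\ldots$ are iid with law $\mu$, it follows that $\mu''=\mu$''), so you are on equal footing there. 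The remaining steps --- Haar on $\Hom(G/H,\hcA)$ is $\Aut(\hcA)$-invariant by Halmos, and indecomposability forces $\tilde\theta$ to be a point mass because distinct $L$ yield distinct invariant measures $\mu_L^\N$ --- are sound and match the paper's closing argument.
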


\begin{proof}
We identify $\hcA$ with the infinite dimensional torus $\T^\N$ and $\Hom(G,\hcA)$ with the infinite product $\Hom(G,\T)^\N$.  Let $h =(h_1,h_2,\ldots) \in \Hom(G,\T)^\N$ be a random homomorphism with law $\chi$. If $\sigma:\N \to \N$ is an arbitrary permutation and $\rho_\sigma \in \Aut(\hcA)$ is defined by $\rho_\sigma(x_1,x_2,\ldots) = (x_{\sigma(1)},x_{\sigma(2)},\ldots)$ then $(\rho_\sigma)_*h = (h_{\sigma(1)}, h_{\sigma(2)},\ldots)$. This means that $h$ is an exchangeable sequence of random variables. So the de Finetti-Hewitt-Savage Theorem\footnote{This Theorem states that if $X_1,X_2,\ldots$ is a family of random variables each taking values in a compact metrizable space and whose joint law is invariant under all permutations of the indices then $X_1,X_2,\ldots$ is conditionally iid.} (see \cite{G} for example) implies that the variables $\{h_i\}_{i\in \N}$ are conditionally iid (independent identically distributed). This means the following. Let $\cP(\Hom(G,\T))$ denote the space of all Borel probability measures on $\Hom(G,\T)$ with the weak* topology. Then there exists a unique Borel probability measure $\omega$ on $\cP(\Hom(G,\T))$ such that
$$\chi = \int \mu^\N~d\omega(\mu)$$
where, for any $\mu \in \cP(\Hom(G,\T))$, $\mu^\N$ denotes the product measure on $\Hom(G,\T)^\N$. 

%Moreover, $\omega$ is unique. This is because the law of $h_1$ is $\int \mu~d\omega(\mu)$ and $\cP(\Hom(G,\T))$ is a Choquet simplex.

So there exists a random measure $\mu \in \cP(\Hom(G,\T))$ with law $\omega$ such that $\{h_i\}$ is an iid sequence of random variables with law $\mu$.

Define $\phi,\psi \in \Aut(\T^\N)$ by
\begin{eqnarray*}
\phi(x_1,x_2,x_3,\ldots) &=& (-x_1,x_2,x_3, \ldots) \\
\psi(x_1,x_2,x_3,\ldots) &=& (x_1+x_2, x_2, x_3, \ldots).
\end{eqnarray*}
%Let $\pi:\Hom(G,\T)^\N \to \Hom(G,\T)$ denote projection to the first coordinate. Let $
Observe that 
\begin{eqnarray*}
\phi_*(h_1,h_2,h_3,\ldots) &=& (-h_1,h_2,h_3, \ldots) \\
\psi_*(h_1,h_2,h_3,\ldots) &=& (h_1+h_2, h_2, h_3, \ldots).
\end{eqnarray*}
Because $\chi$ is invariant under $\phi_*$, we can repeat the argument above with the automorphisms $\rho_\sigma$ to conclude that the variables $-h_1,h_2,h_3,\ldots$ are iid with law $\mu'$ for some $\mu' \in \cP(\Hom(G,\T))$. Because we know that $h_2,h_3,\ldots$ are iid with law $\mu$, it follows that $\mu'=\mu$. Thus if $M:\Hom(G,\T) \to \Hom(G,\T)$ is the map $M(k)=-k$ then $M_*\mu=\mu$ for $\omega$-a.e. $\mu$.

Similarly, because $\chi$ is invariant under $\psi_*$ the variables $h_1+h_2,h_2,h_3,\ldots$ are iid with law $\mu''$ for some $\mu'' \in \cP(\Hom(G,\T))$. Because we know that $h_2,h_3,\ldots$ are iid with law $\mu$, it follows that $\mu''=\mu$. For $z \in \Hom(G,\T)$, let $A_z:\Hom(G,\T) \to \Hom(G,\T)$ be the addition map $A_z(k)=k+z$. Because $h_1+h_2,h_2$ have law $\mu$ and $h_1+h_2$ is independent of $h_2$ it follows that $(A_z)_*\mu=\mu$ for $\mu$-a.e. $z \in \Hom(G,\T)$ and for $\omega$-a.e. $\mu \in \cP(\Hom(G,\T))$. 

Let $S_\mu$ be the support of $\mu$ and $S'_\mu$ be the set of all $z \in S_\mu$ such that $(A_z)_*\mu=\mu$. We have already shown that $\mu(S'_\mu)=1$. So $S'_\mu$ is dense in $S_\mu$. Thus for any $z \in S_\mu$ there exists a sequence $\{z_n\}_{n=1}^\infty \subset S'_\mu$ such that $z_n \to z$ as $n \to\infty$. It follows that $A_{z_n}$ converges to $A_z$ uniformly. So $\lim_{n\to\infty} (A_{z_n})_*\mu = (A_z)_*\mu$. Because $(A_{z_n})_*\mu=\mu$ for all $n$, this implies $(A_z)_*\mu=\mu$. In particular, the support of $\mu$ is invariant under the inverses map and addition. So $S_\mu$ is a closed subgroup of $\Hom(G,\T)$. Moreover, because $\mu$ is invariant under addition by elements of $S_\mu$, $\mu$ must be the Haar measure of $S_\mu$. %So $\mu^\N$ is the Haar measure of $S_\mu^\N$. We observe that $S_\mu^\N$ is $\Aut(\hcA)$-invariant (no matter what $S_\mu$ is). It follows

By definition $\Hom(G,\T) = \hat{G}$. Let $H_\mu=\Ann(S_\mu)\le G$. Then $S_\mu=\Hom(G/H_\mu,\T)$ and $\mu^\N$ is the Haar measure on $S^\N_\mu=\Hom(G/H_\mu,\T)^\N$ which we identify with $\Hom(G/H_\mu,\T^\N)$. For any subgroup $H\le G$, $\Hom(G/H,\T^\N)$ is $\Aut(\hcA)$-invariant. Because the measure $\omega$ is uniquely determined by $\chi$ and $\chi$ is an arbitrary $\Aut(\hcA)$-invariant probability measure,  it follows that the Haar measure on $\Hom(G/H,\T^\N)$ is $\Aut(\hcA)$-indecomposable for every $H\le G$. Because $\chi$ is $\Aut(\hcA)$-indecomposable it follows that $\chi$ must actually be the Haar measure on a subgroup of the form $\Hom(G/H, \T^\N)$.
\end{proof}

%\begin{lem}\label{l0}
%Let $\eta$ denote the Haar measure on $\hcA$. Then for $\eta$-a.e. $h\in \hcA$, the subgroup generated by $h$ is dense in $\hcA$.
%\end{lem}
%\begin{proof}
%Observe that the subgroup generated by $h$ is dense in $\hcA$ if and only if $\Ker(h)=\{0\}$. Since $\A$ is countable, it suffices to check that for each nonzero $x\in \A$, the subgroup $\Ann (x)=\{h\in \hcA: h(x)=0 \}$ has zero Haar measure. But the map $h \mapsto h(x)$ is a homomorphism of $\hcA$ into $\T$ and therefore $\Ann(x)$ must have infinite index in $\hcA$. Since every subgroup of infinite index has zero Haar measure, $\eta(\Ann(x))=0$.
%\end{proof}

\section{Characteristic random subgroups of abelian groups}\label{sec:abelian}

In this section we prove Theorems \ref{thm:hcA}-\ref{thm:cAn}. But first we need two lemmas and a proposition. We note that this lemma in other words means that a divisible abelian group is an injective module over $\Z$, which is well-known. We include the proof for convenience.

\begin{defn}
An abelian group $X$ is {\em divisible} if for every $x\in X$ and integer $n\ne 0$ there exists $y\in X$ such that $ny=x$. For example, $\T$ and $\T^\N$ are divisible.
\end{defn}

\begin{lem}\label{lem:extension}
Let $G$ be a countable abelian group. Also let $X$ be a divisible abelian group. Let $G_0\le G$ be a subgroup and $h:G_0 \to X$ a homomorphism. Then there exists a homomorphism $h':G \to X$ such that $h'(g)=h(g)$ for all $g\in G_0$.
\end{lem}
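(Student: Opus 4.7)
The plan is to extend $h$ one element at a time, which suffices because $G$ is countable (so Zorn's lemma can be avoided, though it also works). Enumerate $G = \{g_1, g_2, \ldots\}$ and construct an increasing chain of subgroups $G_0 \le G_1 \le G_2 \le \cdots$ with $G_n = \langle G_{n-1}, g_n\rangle$, together with homomorphisms $h_n : G_n \to X$ extending $h_{n-1}$. Then $h' := \bigcup_n h_n : \bigcup_n G_n = G \to X$ is the desired extension. So the whole problem reduces to the following one-step extension lemma: given a homomorphism $h_{n-1} : G_{n-1} \to X$ and an element $g \in G$, extend $h_{n-1}$ to $\langle G_{n-1}, g\rangle$.

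For this one-step extension there are two cases. If $\langle g \rangle \cap G_{n-1} = 0$, then $\langle G_{n-1}, g\rangle = G_{n-1} \oplus \langle g \rangle$ (internally), and since $g$ has infinite order modulo $G_{n-1}$, we may define $h_n(a + kg) := h_{n-1}(a)$ for $a \in G_{n-1}$, $k \in \Z$; this is well-defined and a homomorphism. If instead $\langle g \rangle \cap G_{n-1} \ne 0$, let $m$ be the least positive integer with $mg \in G_{n-1}$. Using that $X$ is divisible, choose $y \in X$ with $my = h_{n-1}(mg)$, and define $h_n(a + kg) := h_{n-1}(a) + ky$ for $a \in G_{n-1}$, $k \in \Z$. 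To verify well-definedness, suppose $a + kg = a' + k'g$ in $\langle G_{n-1}, g\rangle$; then $(k-k')g = a' - a \in G_{n-1}$, forcing $m \mid (k-k')$, say $k - k' = mq$. Then $(k-k')y = qmy = q h_{n-1}(mg) = h_{n-1}((k-k')g) = h_{n-1}(a' - a)$, so $h_{n-1}(a) + ky = h_{n-1}(a') + k'y$, as required. It is then straightforward that $h_n$ is a homomorphism and agrees with $h_{n-1}$ on $G_{n-1}$.

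The only real content is the case $\langle g\rangle \cap G_{n-1} \ne 0$, where divisibility of $X$ is used precisely to solve $my = h_{n-1}(mg)$; this is the step that would fail for an arbitrary abelian $X$. Everything else is bookkeeping. Since each $h_n$ extends $h_{n-1}$ and each element of $G$ appears in some $G_n$, the union $h'$ is a well-defined homomorphism on all of $G$ extending $h$.
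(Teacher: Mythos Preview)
Your proof is correct and follows essentially the same route as the paper's: reduce by countability to a one-step extension, split into two cases according to whether some nonzero multiple of the new element lies in the current subgroup, and in the nontrivial case use divisibility of $X$ to solve $my = h_{n-1}(mg)$, with the same well-definedness verification. (One small remark: in your first case, $\langle g\rangle \cap G_{n-1}=0$ does not force $g$ to have infinite order modulo $G_{n-1}$---$g$ could have finite order in $G$---but the internal direct sum $G_{n-1}\oplus\langle g\rangle$ still holds, so the $G_{n-1}$-component $a$ is uniquely determined and your definition of $h_n$ is well-defined regardless.)
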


\begin{proof}
By induction, it suffices to prove that if $g_1 \in G \setminus G_0$ and $G_1\le G$ is the subgroup generated by $G_0 \cup \{g_1\}$ then there exists a homomorphism $h_1:G_1 \to \T$ such that $h_1(g)=h(g)$ for all $g\in G_0$.

Suppose that $n g_{1} \notin G_0$ for any nonzero integer $n$. Then define $h_1:G_{1} \to X$ by
$$h_1(r+ng_{1})=h(r)$$
for any $r\in G_0$ and any integer $n$. Observe that if $r+ng_{1} = s+mg_{1}$ for some $r,s \in G_0$ and $n,m \in \Z$ then $(r-s)=(m-n)g_{1}$ implies $m=n$ and $r=s$. So $h_1$ is well-defined and is a homomorphism.

Suppose now that there exists a positive integer $p_0$ such that $p_0g_{1} = g_0 \in G_0$. Let us assume that $p_0$ is the smallest positive integer such that $p_0g_{1} \in G_0$. Let $t \in X$ be an element such that $p_0t=h(g_0)$. 

\noindent {\bf Claim}. If $r,s \in G_0$ and $m,n \in \Z$ satisfy $r+ng_{1}=s+mg_{1}$ then $h(r)+nt=h(s)+mt$. 

\begin{proof}[Proof of Claim]
Because $(m-n)g_{1} = r-s \in G_0$, the definition of $g_0$ implies $m-n = kp_0$ for some integer $k$. So $r-s=(m-n)g_{1} = kp_0g_{1}=kg_0$. So
$$h(r-s)=h(kg_0)=kh(g_0)=kp_0t = (m-n)t.$$ %This proves the Claim.
\end{proof}
We now define
$$h_1(r+ng_{1})=h(r) + nt$$
for any $r\in G_0$ and any integer $n$. By the claim this is well-defined and is a homomorphism. 
\end{proof}

\begin{defn}
Define $\overline{\Image}:\Hom(G,\hcA) \to \Sub(\hcA)$ by: $\overline{\Image}(h)$ is the closure of the image of $h$.
\end{defn}
%We need the following proposition.
%We will prove this in the next section. 
\begin{prop}\label{prop:haar}
Let $G$ be a countable abelian group, $\chi$ denote the Haar measure on $\Hom(G,\hcA)$ and $h\in \Hom(G,\hcA)$ be a random homomorphism with law $\chi$. %For $h\in \Hom(G,\cA)$, let $\overline{\Image}(h) \in \Sub(\hcA)$ denote the closure of the image of $h$. % and $\overline{\Image}:\Hom(G,\hcA) \to \Sub(\hcA)$ the map $\overline{\Image}(h)$ is the closure of the image of $h$. Then
\begin{itemize}
\item If $G$ is finite then $h$ is injective a.s.
\item If $G=\cA_n$ for some $n \ge 2$ then $\overline{\Image}(h) = \Ann(n\cA)$ a.s. 
\item If $G$ contains an element of infinite order then $\overline{\Image}(h) = \hcA$ a.s. 
\item If the set of orders of elements of $G$ is unbounded then $\overline{\Image}(h) = \hcA$ a.s. 
\end{itemize}

\end{prop}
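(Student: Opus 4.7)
The plan is to identify $\hcA$ with $\T^{\N}$ and then $\Hom(G,\hcA)$ with $\hat G^{\N}$ as compact topological groups (componentwise), under which identification the Haar measure $\chi$ becomes the product of countably many copies of Haar measure $\mu$ on $\hat G$. Consequently a $\chi$-distributed $h=(h_i)_{i\in\N}$ is an i.i.d.\ sequence of $\mu$-distributed elements of $\hat G$. All four bullets will be derived from this product structure together with elementary Pontryagin duality.

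For the first bullet, fix a nontrivial $g\in G$ of order $m\ge 2$. Every character of the cyclic group $\langle g\rangle$ extends to a character of $G$ (Lemma \ref{lem:extension}), so the evaluation map $\hat G\to\T$, $\phi\mapsto\phi(g)$, surjects onto $\T[m]=(1/m)\Z/\Z$; hence $\Pr[h_i(g)=0]=1/m<1$, and independence gives $\Pr[\forall i\ h_i(g)=0]=0$. A union bound over the finitely many nontrivial $g\in G$ yields $\Ker(h)=\{0\}$ almost surely. For the second bullet, the containment $\Image(h)\subseteq\Ann(n\cA)$ is immediate from $nG=0$, so only density remains. Writing $\cA_n=\oplus_\N\Z/n\Z$ with standard generators $e_1,e_2,\ldots$, the tuples $\bigl(h_1(e_j),\ldots,h_k(e_j)\bigr)$ for $j\in\N$ are i.i.d.\ uniform on $(\Z/n\Z)^k$, since Haar on $\hat G=\prod_\N\Z/n\Z$ is the product measure. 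By the second Borel--Cantelli lemma every element of $(\Z/n\Z)^k$ is realized almost surely, so the projection of $\Image(h)$ onto the first $k$ toral coordinates exhausts $(\Z[1/n]/\Z)^k=\pi_k(\Ann(n\cA))$. Intersecting these probability-one events over $k\in\N$ and invoking the standard fact that a closed subgroup of a countable product of finite groups which surjects onto every finite partial product equals the whole product, we conclude $\overline{\Image}(h)=\Ann(n\cA)$ a.s.

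For the third and fourth bullets I would argue uniformly. Since $\Ann$ and $\Ker$ exchange $\Sub(\hcA)$ and $\Sub(\cA)$ contravariantly and descend to bijections between closed subgroups, $\overline{\Image}(h)=\hcA$ iff no nonzero $\alpha\in\cA=\oplus_\N\Z$ annihilates $\Image(h)$; using the finite support of $\alpha$, this becomes the statement that no nontrivial integer combination $\sum_i\alpha_i h_i$ vanishes in $\hat G$. Fix such an $\alpha$ and choose $j$ with $\alpha_j\neq 0$. The identity $\alpha_j\phi(g)=\phi(\alpha_j g)$ gives $\{\phi\in\hat G:\alpha_j\phi=0\}=\Ann(\alpha_j G)$, and Pontryagin duality identifies $\hat G/\Ann(\alpha_j G)\cong\widehat{\alpha_j G}$, so this kernel has infinite index and hence Haar measure zero as soon as $\alpha_j G$ is infinite. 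Under the third bullet an infinite-order $g_0\in G$ produces the infinite-order element $\alpha_j g_0\in\alpha_j G$, while under the fourth bullet choosing $g_n\in G$ of order $m_n\to\infty$ makes $\alpha_j g_n$ have order $m_n/\gcd(m_n,\alpha_j)\to\infty$; either way $\alpha_j G$ is infinite. Conditioning on $\{h_i\}_{i\ne j}$ and applying Fubini yields $\Pr[\sum_i\alpha_i h_i=0]=0$, and a countable union bound over $\alpha\in\cA\setminus\{0\}$ completes the argument.

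The principal bookkeeping obstacle is the duality step in the last two bullets, namely identifying $\{\phi:\alpha_j\phi=0\}$ with $\Ann(\alpha_j G)$ and translating ``$\alpha_j G$ is infinite'' into ``$\Ann(\alpha_j G)$ has Haar measure zero'' via the compact quotient $\hat G/\Ann(\alpha_j G)\cong\widehat{\alpha_j G}$; once these are set up correctly, the remainder reduces to the product structure of $\chi$ together with elementary Borel--Cantelli and Fubini computations.
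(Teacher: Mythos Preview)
Your argument is correct. It differs from the paper's proof mainly in which product decomposition of $\Hom(G,\hcA)$ it exploits and in how much it leans on $\Aut(\hcA)$-invariance. For the first bullet, the paper observes that $h\mapsto\Ker(h)$ is $\Aut(\hcA)$-invariant, so by ergodicity $\Ker(h)$ is a.s.\ constant, and then translates by a fixed injective $h'$ to force that constant to be trivial; your coordinate-by-coordinate computation via $\Pr[h_i(g)=0]=1/m$ is more hands-on but equally valid. For the second bullet, the paper decomposes along the generators of $\cA_n$ (writing $h$ as an i.i.d.\ sequence $X_j=h(e_j)$ of Haar-random points in $\Ann(n\cA)$) and then dualizes, showing $\bigcap_j\Ker(X_j)=n\cA$ a.s.; you instead decompose along the $\T^\N$-coordinates and use Borel--Cantelli to fill every finite partial product. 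For the last two bullets, both proofs reduce to showing no nonzero $\alpha\in\cA$ annihilates $\Image(h)$; the paper uses $\Aut(\hcA)$-invariance to normalize $\alpha$ to $ke_1$ and then bounds $\Pr[h(g)(ke_1)=0]$ for a single well-chosen $g\in G$, whereas you keep $\alpha$ arbitrary and condition on all coordinates but one. Your route is more self-contained in that it never invokes $\Aut(\hcA)$-ergodicity or symmetry reductions, at the price of slightly more explicit bookkeeping; the paper's route is shorter in places precisely because it spends the symmetry.
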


\begin{proof}

Suppose $G$ is finite. Because $\chi$ is $\Aut(\hcA)$-invariant and ergodic, there exists a subgroup $H\le G$ such that $\Ker(h)=H$ for $\chi$-a.e. $h$. Let $h' \in \Hom(G,\hcA)$ be an injective homomorphism. Because $\chi$ is the Haar measure on $\Hom(G,\hcA)$, $\chi$ is invariant under the map $h \mapsto h+h'$. Therefore, $H=\Ker(h)=\Ker(h+h')$ for $\chi$-a.e. $h$. Because the kernel of $h'$ is trivial, this implies $H$ is trivial. This proves the first item.

Suppose now that $G=\cA_n = \oplus_\Z (\Z/n\Z)$. Then for $\chi$-a.e. $h\in \Hom(G,\hcA)$, the image of $h$ lies in the subgroup $\Ann(n\cA)$ (this is because $\Ann(n\cA)$ contains every element of order $n$ in $\hcA$). Let $X_1,X_2,\ldots \in \hcA$ be iid random variables each with law equal to the Haar measure on $\Ann(n\cA)$. Then the law of the subgroup $\overline{\langle X_1,X_2,\ldots \rangle}$ is the same as the law of $\overline{\Image}(h)$ where $h \in \Hom(\cA_n,\hcA)$ is chosen uniformly at random. So it suffices to show that $\overline{\langle X_1,X_2,\ldots \rangle}  = \Ann(n\cA)$ a.s. By duality this is equivalent to showing that $\cap_{i=1}^\infty \Ker(X_i) = n\cA$ almost surely. We have that  $\cap_{i=1}^\infty \Ker(X_i) \supset n\cA$, and will now show the inverse inclusion.
Note that since $X_i$ are iid, we have that $$\mathbb{P}(v\in\cap_i \Ker(X_i))=\lim_i \mathbb{P}(X_1(v)=0)^i,$$ which is $0$ if $\mathbb{P}(X_1(v)=0)<1$. Thus it suffices to show that $\mathbb{P}(X_1(v)=0)=1$ implies that $v\in n\A$. However $\mathbb{P}(X_1(v)=0)=1$ says that $X(v)=0$ for a.a. $X\in\Ann(n\A)$ with respect to Haar measure on $\Ann(n\A)$. Since the condition $X(v)=0$ defines a closed set of $X$'s, we have by continuity that $X(v)=0$ for all $X\in\Ann(n\A)$. Thus $v\in\Ker(\Ann(n\A))=n\A$ (see \S \ref{sec:duality}).

To prove the last two items, note first that $\overline{\Image}(h) = \hcA$ is equivalent to $\Ker(h(G))=0$. So it suffices to show that $\Ker(h(G))=0$ almost surely. Because $\A$ is countable, this is equivalent to the statement that for any nonzero $v\in\A$,  $v \notin \Ker(h(G))$ almost surely. Let us identify $\A$ with $\oplus_\N \Z$. Since $\chi$ is $\Aut(\hcA)$-invariant, it suffices to show check this condition in the special case $v=ke_1$ where $k \geq 1$ and $e_1=(1,0,\dots)$.

For any $g\in G$, the probability that $ke_1 \in \Ker(h(G))$ is at most the probability that $ke_1 \in \Ker(h(g))$. So it suffices to show there is a sequence $\{g_n\} \subset G$ such that $\mathbb{P}(h(g_n)(ke_1) =0) \to 0$ as $n\to\infty$ where $\mathbb{P}(\cdot)$ denotes probability.

Fix $g\in G$. Consider the map from $\Hom(G,\hcA)$ to $\hcA$ given by $h\mapsto h(g)$. This map is a continuous homomorphism. So it pushes the Haar measure on $\Hom(G,\hcA)$ forward to Haar measure, denoted by $\eta$, on the image subgroup $\{h(g):~h\in \Hom(G,\hcA)\}$. By Lemma \ref{lem:extension}, the image subgroup is either $\hcA$ (if $g$ has infinite order) or $\Ann(m\cA)$, if $g$ has order $m<\infty$. %In either case, it suffices to show that $\eta(\{x\in \hcA:~x(ke_1)=0\})=0$. 

If $g$ has infinite order then because the closed subgroup $\{x\in \hcA:~x(ke_1)=0\}$ has infinite index in $\hcA$, $\eta(\{x\in \hcA:~x(ke_1)=0\})=0$. Equivalently,  $\mathbb{P}(h(g)(ke_1) =0) =0$. Thus $\overline{\Image}(h)=\hcA$.

Suppose $g$ has finite order $m$. Identify $\Ann(m\A)$ with $\prod_\N \Z[1/m]/\Z$. We compute that
\[
\mathbb{P}(h(g)(ke_1)=0) = \eta(\{x| x(ke_1)=0\})=\frac{\#\{x\in \Z[1/m]/\Z \,\,|\,\,xk=0 \}}{m}=\frac{\gcd(k,m)}{m}.
\]
Since this ratio goes to $0$ when $m$ goes to infinity, we are done.
\end{proof}

We have the following obvious lemma, which is easily proved using uniqueness in the structure theorem for finite abelian groups.
\begin{lem}\label{lem:over}
%Let $F=\oplus_{j=1}^{s}\Z/q_j^{t_j}\Z$ with none $q_j^{t_j}$ dividing $n$, then $F$ is over $n$ (see the Definition \ref{defn:over}). 
Given a finite abelian group $F$ and integer $n\ge 1$, let $F_{(n)}=\{x\in F\,|\, nx=0\}$. Then for any finite abelian group $H$ there is a finite abelian group $F$ such that $F/F_{(n)} \cong H$ and $F$ is over $n$ (in the sense of Definition \ref{defn:over}). Moreover $F$ is uniquely determined up to isomorphism.
\end{lem}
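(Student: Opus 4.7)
The plan is to reduce to the case of cyclic $p$-groups via the primary decomposition, and then to a direct computation in the cyclic case. Write $F = \oplus_i \Z/p_i^{r_i}\Z$ with $r_i \ge 1$. Since the formation of $F_{(n)}$ respects direct sums, one has $F_{(n)} = \oplus_i (\Z/p_i^{r_i}\Z)_{(n)}$ and hence $F/F_{(n)} \cong \oplus_i (\Z/p_i^{r_i}\Z)/(\Z/p_i^{r_i}\Z)_{(n)}$. Likewise, by Definition \ref{defn:over}, $F$ is over $n$ iff each summand $\Z/p_i^{r_i}\Z$ is over $n$, i.e., $p_i^{r_i} \nmid n$ for every $i$. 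This splits both existence and uniqueness into independent problems, one for each prime-power summand.

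First I would carry out the cyclic computation. For $F = \Z/p^t\Z$ and $e = v_p(n)$ the $p$-adic valuation of $n$, an element $x$ satisfies $nx=0$ iff $p^t \mid nx$, which yields $F_{(n)} = p^{\max(t-e,0)}\Z/p^t\Z$ and therefore $F/F_{(n)} \cong \Z/p^{\max(t-e,0)}\Z$. The condition ``$F$ is over $n$'' is $p^t \nmid n$, equivalently $t > e$, and in that range the quotient $\Z/p^{t-e}\Z$ is a nontrivial cyclic $p$-group. Consequently, for every $s \ge 1$ the unique (up to isomorphism) cyclic $p$-group $F$ that is over $n$ and has $F/F_{(n)} \cong \Z/p^s\Z$ is $F = \Z/p^{s+e}\Z$.

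For existence in general, decompose $H \cong \oplus_j \Z/p_j^{s_j}\Z$ with $s_j \ge 1$ (the empty direct sum if $H=0$) and set
\[
F \;=\; \oplus_j \Z/p_j^{s_j + v_{p_j}(n)}\Z.
\]
The cyclic calculation combined with the primary-decomposition remark above shows that each summand of $F$ is over $n$ (hence so is $F$) and that $F/F_{(n)} \cong H$.

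For uniqueness, let $F = \oplus_i \Z/p_i^{r_i}\Z$ be any finite abelian group over $n$ with $F/F_{(n)} \cong H$. Being over $n$ forces $r_i > v_{p_i}(n)$ for every $i$, and the cyclic case yields $F/F_{(n)} \cong \oplus_i \Z/p_i^{r_i - v_{p_i}(n)}\Z$. By the uniqueness part of the structure theorem for finite abelian groups, the multiset $\{(p_i,\, r_i - v_{p_i}(n))\}_i$ is determined by the isomorphism class of $H$; since $v_{p_i}(n)$ depends only on $p_i$ and $n$, the multiset $\{(p_i, r_i)\}_i$ is determined as well, so $F$ is unique up to isomorphism. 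There is no real obstacle here beyond the cyclic bookkeeping and an appeal to the uniqueness half of the structure theorem.
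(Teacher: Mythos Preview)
Your proof is correct and follows essentially the same approach as the paper's: both reduce via the structure theorem to cyclic $p$-power summands, compute $F_{(n)}$ and $F/F_{(n)}$ explicitly on each summand, and then invoke uniqueness of elementary divisors. The only difference is notational---you write the cyclic computation in terms of the $p$-adic valuation $v_p(n)$, while the paper expresses the same quantity as $\mathrm{lcm}(n,p^t)/n$; your formula $u_i = s_i + v_{p_i}(n)$ is exactly the paper's $u_i = t_i + k_i$ with $k_i$ the largest power of $p_i$ dividing $n$.
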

\begin{proof}
%Let $p_1,\ldots, p_s$ be prime numbers and $u_1,\ldots, u_s$ be positive integers such that $n=p_1^{u_1}\cdots p_s^{u_s}$. 

By the classification of finite abelian groups, there are primes $p_1,\ldots, p_s$ and positive integers $t_1,\ldots, t_s$ such that $H=\oplus_{j=1}^{s}\Z/p_j^{t_j}\Z$. 

Let $F$ be any finite abelian group. Without loss of generality, $F = \oplus_{i=1}^r \Z/q_i^{u_i}\Z$ for some primes $q_i$ and integers $u_i \ge 1$ that are uniquely determined by $F$. Then
$$F_{(n)} = \bigoplus_{i=1}^r \frac{\textrm{lcm}(n,q_i^{u_i})}{n} \Z \Big/q_i^{u_i}\Z, \quad F/F_{(n)} \cong \Z\Big/\frac{\textrm{lcm}(n,q_i^{u_i})}{n} \Z.$$
Therefore, $F$ is over $n$ if and only if $q_i^{u_i}$ does not divide $n$ for every $i$. Suppose this is the case. Then $F/F_{(n)} \cong H$ if and only  if: after permuting indices if necessary, $r=s$, $p_i=q_i$ for all $i$ and $p_i^{t_i} =  \textrm{lcm}(n,p_i^{u_i})/n$ for all $i$. This condition uniquely determines $u_i$. Indeed, $u_i = t_i+k_i$ where $k_i \ge 0$ is determined by: $p_i^{k_i} \mid n$ and $p_i^{k_i+1} \nmid n$. 
\end{proof}

%\begin{lem}
%Let $X$ be a compact topological space $G$ be a group that acts by homeomorphisms on $X$. Denote by $\P_G(X)$ the convex closed set of all $G$-invariant measures on $X$. Denote by $\P^e_G(X)$ the set of extreme points of $\P_G(X)$.  Suppose that $Y\subset X$ is a $G$-invariant closed subset. Then $\P_G(Y)\subset \P_G(X)$ and $\P^e_G(Y)\subset \P^e_G(X)$.
%\end{lem}
%\begin{proof}
%The firs assertion is obvious. For the second, suppose that $\mu\in \P^e_G(Y)$, and $\mu=t\nu_1+(1-t)\nu_2$ for some $t\in (0,1)$ and $\nu_1,\nu_2\in \P_G(X)$. We have since $t\neq 0,1$ that the support of $\mu$ is the union of supports of $\nu_1$ and $\nu_2$. Since the support of $\mu$ is a subset of $Y$, it follows that the supports of $\nu_i$ are subsets of $Y$, therefore $\nu_1,\nu_2$ are in fact elements of $\P_G(Y)$. A contradiction.
%\end{proof}

\begin{proof}[Proof of Theorem \ref{thm:hcA}]
%We will prove Theorem \ref{thm:--} first. 
Observe that the special case $G=\hcA_n$ ($n\ge 2$) follows from the case $G=\hcA$ because $\hcA_n$ is naturally a characteristic subgroup of $\hcA$. Indeed we may regard $\hcA_n \cong (\Z[1/n]/\Z)^\N$ as a characteristic subgroup of $\hcA \cong \T^\N$. Therefore every CRS of $\hcA_n$ is automatically a CRS of $\hcA$.  In fact the inclusion map $\hcA_n \to \hcA$ induces an $\Aut(\hcA)$-equivariant inclusion map $\Sub(\hcA_n) \to \Sub(\hcA)$ which induces an affine embedding $\CRS(\hcA_n) \to \CRS(\hcA)$. Therefore it suffices to prove the special case $G=\hcA$.

Let $\lambda \in \CRS^e(\hcA)$. Define a measure $\chi$ on $\Hom(\cA,\hcA)$ by
$$\chi = \int \chi_{K}~d\lambda(K)$$
where $\chi_K$ denotes the Haar measure on the subgroup $\Hom(\cA,K) \le  \Hom(\cA,\hcA)$. Observe that $\overline{\Image}_*\chi=\lambda$. Because $\lambda$ is $\Aut(\hcA)$-invariant, so is $\chi$. By Theorem \ref{thm:key2}, $\chi$ is a convex integral of Haar measures on subgroups of $\Hom(\cA,\hcA)$ of the form $\Hom(\cA/H,\hcA)$. So $\lambda$ is a convex integral of measures of the form $\overline{\Image}_*\chi_{\cA/H}$ where $\chi_{\cA/H}$ denotes Haar measure on $\Hom(\cA/H,\hcA)$. Because $\lambda$ is indecomposable and each measure of the form $\overline{\Image}_*\chi_{\cA/H}$ is $\Aut(\hcA)$-invariant there is a countable abelian group $G$ such that $\lambda=\overline{\Image}_*\chi_G$ where $\chi_G$ denotes Haar measure on $\Hom(G,\hcA)$.

%Define $\overline{\Image}:\Hom(\cA,\hcA) \to \Sub(\hcA)$ by: $\overline{\Image}(h)$ is the closure of the image of $h$. 

%So it suffices to classify $\overline{\Image}_* \chi_{\cA/H}$ for every subgroup $H\le \cA$ and closed characteristic subgroup $L\le \hcA$. Indeed, every measure of the form $\overline{\Image}_* \chi_{\cA/H,L}$ is $\Aut(\hcA)$-invariant since $\overline{\Image}$ is $\Aut(\hcA)$-equivariant. 

By Proposition \ref{prop:haar} if there does not exist a finite bound on the order of elements of $G$ then the image of $h$ is dense in $\hcA$ for $\chi_G$-a.e. $h$. Thus  $\lambda=\delta_{\hcA}$, the point measure on $\hcA$. This measure corresponds to the pair $(0,[0])$, where $0$ is a trivial group (recall the statement of Theorem \ref{thm:hcA}).

Suppose now that there does exist a bound on the order of elements of $G$. Then the first Pr\"ufer Theorem implies 
$$G\cong F \oplus \cA_{n_1} \oplus \cdots \oplus \cA_{n_r}$$
for some finite abelian group $F$ and integers $n_1,\ldots, n_r \ge 1$ (note $\cA_1$ is the trivial group). Without loss of generality, we identify $G$ with the direct sum above. So for any $h \in \Hom(G,\hcA)$, 
$$\overline{h(G)} = \overline{h(F)} + \overline{h(\cA_{n_1})}+ \cdots + \overline{h(\cA_{n_r})}.$$
By Proposition \ref{prop:haar} we have that $\sum_i \overline{h(\A_{n_i})}=\sum_i \Ann(n_i\cA) = \Ann(m\A)$ for $\chi_G$-a.e. $h$ where $m=\textrm{lcm}(n_i)$.

If $F=F_0\oplus F_1$ for some subgroups $F_0,F_1$ and $mF_0 = 0$ then $h(F_0) \le \Ann(m\cA)$. Therefore 
$$\overline{h(G)} = h(F_1) + \Ann(m\cA) = \overline{h(F_1 \oplus  \cA_{n_1} \oplus \cdots \oplus \cA_{n_r})}$$
for $\chi_G$-a.e. $h$. So without loss of generality, we may assume that $F$ does not have any direct summand $F_0$ with $mF_0=0$. In other words, we may assume $F$ is over $m$ (Definition \ref{defn:over}). Thus $\lambda$ corresponds to the pair $(m,[F])$ in the notation of Theorem \ref{thm:hcA}. Now we only have to check that the pair $(m,[F])$ is uniquely determined by $\lambda$.

Because $\overline{h(G)}$ is a finite extension of $\Ann(m\cA)$ for a.e. $h$, the integer $m$ is uniquely determined by $\lambda$. By Proposition \ref{prop:haar}, $h$ restricted to $F$ is injective for $\chi_G$-a.e. $h$. So $F/F_{(m)} \cong \overline{h(G)}/\Ann(m\cA)$ for $\chi_G$-a.e. $h$. Lemma \ref{lem:over} now implies the isomorphism class of $F$ is uniquely determined by $\lambda$ and the requirement that $F$ is over $m$.

\end{proof}

\begin{proof}[Proof of Theorem \ref{thm:cAn}]
%Define $\Ann:\Sub(\cA) \to \Sub(\hcA)$ and $\Ker:\Sub(\hcA) \to \Sub(\cA)$ by
%$$\Ann(X) = \{ h \in \hcA:~ h(x)=0~ \forall x\in X\}$$
%$$\Ker(H) = \{x\in \cA:~h(x)=0~\forall h \in H\}.$$
%Observe that these maps are continuous inverses of each other and they are $\Aut(\cA)$-equivariant. So they induce homeomorphisms $\Ann:\Char(\cA) \to \Char(\hcA)$, $\Ker:\Char(\hcA) \to \Char(\cA)$ and affine homeomorphisms $\Ann_*: \CRS(\cA) \to \CRS(\hcA)$ and $\Ker_*:\CRS(\hcA) \to \CRS(\cA)$. Also $\Ann$ and $\Ker$ are order-reversing in the sense that $H\le K$ implies $\Ann(H) > \Ann(K)$ and similarly with $\Ker$. Lastly, they take addition to intersection and vice versa. To be precise, suppose $H,K$ are subgroups of  $\cA$. Then
%$$\Ann(H+K) = \Ann(H) \cap \Ann(K) \quad \Ann(H \cap K) = \Ann(H)+\Ann(K).$$
%A similar statement holds with $\Ker$ in place of $\Ann$.

Consider first the case $G=\cA$. As discussed in \S \ref{sec:duality}, any indecomposable CRS of $\cA$ is of the form $\eta=\Ker_*\lambda$ for some $\lambda \in \CRS^e(\hcA)$. So Theorem \ref{thm:hcA} implies $\lambda$ corresponds to the pair $(m,[F])$, where $m\ge 0$ is an integer and $F$ is a finite abelian group over $m$. Thus if $h \in \Hom(F,\hcA)$ is random with law equal to the Haar measure of $\Hom(F,\hcA)$ then $\eta$ is the law of $\Ker(h(F) + \Ann(m\cA))$. By (\ref{eqn:dual}) from \S \ref{sec:duality}, $\eta$ is the law of 
$$\Ker(h(F)) \cap \Ker(\Ann(m\cA))=\Ker(h(F)) \cap m\cA.$$
Note now that $v\in \Ker(h(F))$ if and only if for every $f\in F$ we have $0=h(f)(v)=\hat{h}(v)(f)$, where $\hat{h}:\A\to\hat{F}$ is the homomorphism dual to $h$. Since this is true for every $f\in F$, it follows that $\hat{h}(v)=0$, or $v\in \Ker(\hat{h})$. Thus $\Ker(h(F))=\Ker(\hat{h})$. To finish the proof in the case $G=\A$, it is left to note that the duality gives a continuous group isomorphism between $\Hom(F,\hcA)$ and $\Hom(\A,\hat{F})$, and that  $F\simeq\hat{F}$ since $F$ is finite.

Suppose now that $G=\A_n=\A/n\A$.  Since $n\A$ is a characteristic subgroup of $\A$, any CRS on $\A_n$ by taking its preimage under the factor map $\A\to \A/n\A=\A_n$ gives a CRS on $\A$, which will contain $n\A$ almost surely. Thus in order to describe $\CRS^e(\A_n)$ it suffices to describe the set of those elements $\CRS^e(\A)$ that contain $n\A$ with probability $1$. Clearly, they are those elements of $\CRS^e(\A)$ that correspond to the pair $(m,[F])$ with $m|n$ and $nF=0$.

Note that if $m|n$ and $nF=0$, then $m\A/n\A=m\A_n$ and $\Hom(\A,F)=\Hom(\A_n,F)$, since for any $h\in\Hom(\A,F)$ we have that $h(n\A)=nh(\A)\subset nF=0$. Thus an element in $\CRS^e(\A_n)$ corresponding to the pair $(n,[F])$ can be written as $m\A_n\cap\Ker(h)$, where $h\in \Hom(\A_n,F)$ has the Haar law.
\end{proof}

\subsection{Topology of $\CRS^e(\hcA)$}
Next we describe the topology of the space $\CRS^e(\hcA)$, identified with the set of pairs $(n,[F])$ such that $n\geq 0$ and $[F]$ is an isomorphism class of a finite abelian group $F$ over $n$.
\begin{thm}\label{thm:topology}
Let $(n_i,[F_i])$ be a sequence corresponding to $\lambda_i \in \CRS^e(\hcA)$. The following statements hold.
\begin{itemize}
\item If $n_i\to\infty$, then $\lambda_i\to \delta_{\hcA}$.
\item If $n_i$ stabilizes, but $\maxorder(F_i)$, the maximal order of elements in $F_i$, goes to $\infty$, then $\lambda_i\to \delta_{\hcA}$.
\item Suppose $n_i$ stabilizes to $n$. Suppose also that eventually $F_i\cong F\oplus (\oplus_{j=1}^s(\Z/q_j^{t_j}\Z)^{m_j(i)})$ for some fixed finite abelian group $F$, integer $s \ge 0$, primes $q_j$ and integers $t_j \ge 0$, and that for all $j$ $\lim_{i\to\infty} m_j(i)=\infty$ (this is the case of bounded $\maxorder(F_i)$). Then $\lambda_i$ converges to indecomposable CRS corresponding to the pair $(\textrm{lcm}(n,\{q_j^{t_j}\}_{j=1}^s),[F])$
\item For any sequence it is possible to choose subsequence which is one of the first three types.
\end{itemize}
\end{thm}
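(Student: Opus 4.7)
The plan is to work dually. By \S\ref{sec:duality}, $\Ann:\Sub(\hcA)\to\Sub(\A)$ is an $\Aut(\hcA)$-equivariant Chabauty homeomorphism, and under this duality the CRS for $(n,[F])$ becomes the law of $n\A\cap\ker(\hat h)$, where $\hat h:\A\to F$ is Haar-uniform in $\Hom(\A,F)=F^\N$ (its values on the standard basis of $\A$ are i.i.d.\ uniform in $F$). Since $\A$ is countable discrete, Chabauty convergence in $\Sub(\A)$ reduces to agreement on every finite test set, so it suffices to track $\Pr(x\in K_i)$ for each $x\in\A$ and apply a union bound. In particular $\lambda_i\to\delta_{\hcA}$ iff for every nonzero $x\in\A$ the probability that $x$ lies in the dual random subgroup tends to $0$.

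For Case~1 ($n_i\to\infty$), $\Ann(n_i\A)=((1/n_i)\Z/\Z)^\N$ converges deterministically to $\hcA$ in Chabauty, because any target $(x_k)\in\T^\N$ is approximated coordinatewise to within $1/n_i$. Since $\Ann(n_i\A)+h(F_i)\supseteq\Ann(n_i\A)$, the larger subgroup inherits condition (b) of Lemma~\ref{top_sub} and satisfies (a) automatically, hence $\lambda_i\to\delta_{\hcA}$ for every outcome of $h$. For Case~2 ($n_i\equiv n$, $\maxorder(F_i)\to\infty$), fix nonzero $x=\sum c_k e_k\in n\A$ and pick $k_0$ with $c_{k_0}\neq 0$. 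Using the i.i.d.\ structure of the coordinates of $\hat h_i$, write $\hat h_i(x)=c_{k_0}Y_{k_0}+Z$ with $Y_{k_0}$ uniform in $F_i$ independent of $Z$, so $\Pr(\hat h_i(x)=0)\leq|F_i[c_{k_0}]|/|F_i|$ where $F_i[c_{k_0}]=\{y\in F_i:c_{k_0}y=0\}$. Some summand $\Z/p^r\Z$ of $F_i$ has $p^r\to\infty$ while $\gcd(c_{k_0},p^r)\leq|c_{k_0}|$ stays bounded, so this ratio vanishes, and a union bound over any finite test set gives $\lambda_i\to\delta_{\hcA}$.

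For Case~3, decompose $h=h_F\oplus h_{G_i}$ into independent Haar factors with $G_i=\oplus_j(\Z/q_j^{t_j}\Z)^{m_j(i)}$; the images of its standard generators are i.i.d.\ uniform $Y_{j,k}\in\Ann(q_j^{t_j}\A)\cong(\Z/q_j^{t_j}\Z)^\N$. For each fixed $K$, the Burnside basis theorem reduces generation of $(\Z/q_j^{t_j}\Z)^K$ to spanning the Frattini quotient $\F_{q_j}^K$ after reducing mod $q_j$, and the spanning probability there equals $\prod_{\ell=0}^{K-1}(1-q_j^{\ell-m_j(i)})\to 1$. Hence $\langle Y_{j,k}\rangle_k\to\Ann(q_j^{t_j}\A)$ in probability in Chabauty, and summing over the finitely many $j$'s gives $h_{G_i}(G_i)\to\sum_j\Ann(q_j^{t_j}\A)=\Ann(N\A)$ in probability, with $N=\textrm{lcm}_j(q_j^{t_j})$. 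The addition map $(M,K)\mapsto M+K$ is Chabauty-continuous on $\Sub(\hcA)\times\Sub(\hcA)$ by compactness of $\hcA$, so joint convergence lifts to $\Ann(n\A)+h(F_i)\to\Ann(n'\A)+h_F(F)$ in distribution with $n'=\textrm{lcm}(n,N)$. Since the growing summand types are isolated in $G_i$, $F$ has no $\Z/q_j^{t_j}\Z$ summand; together with $F_i$ being over $n$ this forces $F$ to be over $n'$, so the limit is exactly the CRS for $(n',[F])$.

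For the fourth item, first extract a subsequence with $n_i\to\infty$ if $\{n_i\}$ is unbounded (Type~1), or stabilize $n_i\equiv n$ otherwise; then extract to force $\maxorder(F_i)\to\infty$ if possible (Type~2). In the remaining case, $F_i$ has uniformly bounded exponent, so only finitely many summand types $\Z/p^r\Z$ with $p^r\nmid n$ can appear, and a diagonal extraction over these finitely many types makes each multiplicity converge in $\{0,1,2,\ldots,\infty\}$, yielding a Type~3 decomposition. The main technical point is the random-generation step in Case~3, which as sketched reduces to the classical spanning-probability estimate over $\F_{q_j}$ via the Burnside basis lift from $\Z/q_j^{t_j}\Z$ to $\F_{q_j}$; the other delicate bookkeeping is verifying that the decomposition $F_i=F\oplus G_i$ in Case~3 produces an $F$ which is indeed over $n'$, but this follows directly from the requirement that all growing summand types be collected into $G_i$.
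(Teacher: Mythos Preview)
Your argument is essentially correct and reaches the same conclusions as the paper, but by a genuinely different route in Cases~2 and~3. The paper works entirely on the $\hcA$ side using explicit Chabauty neighborhoods $N_{U_{J,\ep}}$: for Case~2 it shows (Lemma~\ref{lem:top-2}) that a Haar-random $h\in\Hom(\Z/m\Z,\hcA)$ has $h(\Z/m\Z)+U_{J,\ep}=\hcA$ with high probability as $m\to\infty$, and for Case~3 it shows (Lemma~\ref{lem:cont3}, via Lemma~\ref{lem:ratio00}) that the image of $(\Z/m\Z)^k$ fills $\Ann(m\A)$ up to $U_{J,\ep}$ as $k\to\infty$, then uses the observation $H\in N_U(K_1)\Rightarrow H+T\in N_U(K_1+T)$ in place of your continuity-of-addition lemma. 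Your Case~2 instead passes to the dual $\Sub(\A)$, where $\A$ is discrete and weak* convergence to $\delta_{\{0\}}$ reduces to the elementary pointwise estimate $\Pr(\hat h_i(x)=0)\le|F_i[c_{k_0}]|/|F_i|\to 0$; this is cleaner than the paper's density argument. Your Case~3 replaces the paper's elementary counting by the Burnside basis theorem plus the $\F_{q_j}$-spanning probability, and packages the ``adding a fixed piece'' step as continuity of $(M,K)\mapsto M+K$ on $\Sub(\hcA)$ (valid since $\hcA$ is compact); this is more conceptual but imports slightly heavier machinery where the paper gets by with bare hands.

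One genuine soft spot: your last sentence in Case~3 asserts that ``$F$ has no $\Z/q_j^{t_j}\Z$ summand'' and that this, together with $F_i$ over $n$, forces $F$ over $n'$. Neither step is justified by the hypothesis as stated---the decomposition $F_i\cong F\oplus G_i$ does not forbid $F$ from containing a $\Z/q_j^{t_j}\Z$ summand, and even if it did, $F$ could still carry a $\Z/q_j^{r}\Z$ summand with $r<t_j$, which is not over $n'$. What your argument (and the paper's) actually proves is that $\lambda_i$ converges to the law of $\Ann(n'\A)+h_F(F)$; this is always an indecomposable CRS, but its canonical label is $(n',[F'])$ where $F'$ is obtained from $F$ by stripping summands killed by $n'$. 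The paper's proof has exactly the same gap (it ends at the integral over $\Hom(F,\hcA)$ without checking that $F$ is over $n'$), so this is really an imprecision in the theorem statement rather than a defect specific to your approach.
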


\begin{cor}\label{cor:top}
$\CRS^e(\hcA)$ is closed in $\CRS(\hcA)$ (i.e., $\CRS(\hcA)$ is a Bauer simplex).
\end{cor}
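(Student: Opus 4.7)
The plan is to deduce the corollary directly from Theorem \ref{thm:topology} by extracting convergent subsequences. Since $\CRS(\hcA) \subset \cP(\Sub(\hcA))$ inherits a compact metrizable topology from the weak* topology, closure is equivalent to sequential closure in this setting.

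First, I would fix a sequence $\{\lambda_i\} \subset \CRS^e(\hcA)$ converging in the weak* topology to some $\lambda \in \CRS(\hcA)$, and aim to show that $\lambda \in \CRS^e(\hcA)$. By Theorem \ref{thm:hcA}, each $\lambda_i$ is labeled by a unique pair $(n_i,[F_i])$ with $F_i$ a finite abelian group over $n_i$.

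Next, I would apply the fourth clause of Theorem \ref{thm:topology} to pass to a subsequence $\{\lambda_{i_k}\}$ whose parameters $(n_{i_k},[F_{i_k}])$ fall into one of the first three cases of that theorem. In each case the theorem identifies the weak* limit of $\lambda_{i_k}$ explicitly as an element of $\CRS^e(\hcA)$: either $\delta_{\hcA}$, corresponding to the pair $(0,[0])$, in the first two cases, or the indecomposable CRS corresponding to $(\lcm(n,\{q_j^{t_j}\}_{j=1}^s),[F])$ in the third. Since $\CRS(\hcA)$ is Hausdorff, this subsequential limit must coincide with $\lambda$, whence $\lambda \in \CRS^e(\hcA)$. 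This establishes that $\CRS^e(\hcA)$ is closed in $\CRS(\hcA)$.

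Finally, as $\CRS(\hcA)$ was already observed in the introduction (following \cite{Ph01}) to be a Choquet simplex, and since a Choquet simplex whose extreme boundary is closed is by definition a Bauer simplex, the parenthetical assertion of the corollary follows immediately. There is no genuine obstacle here, since Theorem \ref{thm:topology} carries all the weight; the only delicate point is making sure the list of subsequential limits is exhaustive, which is precisely the content of the fourth clause of that theorem.
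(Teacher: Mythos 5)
Your proof is correct and is exactly the intended argument: Theorem~\ref{thm:topology} is structured precisely so that its fourth clause lets one extract a subsequence falling into one of the first three cases, each of whose limits is explicitly identified as an element of $\CRS^e(\hcA)$, and uniqueness of limits in the Hausdorff weak* topology then forces the original limit into $\CRS^e(\hcA)$. The paper does not spell out a separate proof of the corollary, leaving it as an immediate consequence of Theorem~\ref{thm:topology} in the way you describe.
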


\begin{proof}[Proof of Theorem \ref{thm:topology}]
Recall (\cite{BHK09}) that for a compact group $K$, given a neighborhood $U\subset K$ of identity, we obtain a neighborhood $N_U(K_1)$ of $K_1\in\Sub(K)$ in the Chabauty topology  by 
\[
N_U(K_1):=\{H\leq K\,|\,H+U\supset K_1,\, K_1+U\supset H \}.
\]
Note that for any subgroup $T\in\Sub(K)$, if  $H\in N_U(K_1)$, then $H+T\in N_{U}(K_1+T)$.

Given $\epsilon>0$ and a finite subset $J \subset \N$ define a neighborhood $U_{J,\ep}$ of the identity  in $\hcA=(\R/\Z)^\N$ by 
\[
U_{J,\ep}=\{v=(v_j)\in(\R/\Z)^\N\,|\,|v_j|<\ep\text{ for all }j\in J\}
\]
where $|x+\Z| := \min_{k\in \Z} |x+k|$. The sets $U_{J,\ep}$ form a neighborhood basis of the identity in $\hcA$.

We will obtain the theorem from a sequence of lemmas.
\begin{lem}\label{lem:cont1}
For any $m>\ep^{-1}$ and $J \subset \N$, $\Ann(m\A)+U_{J,\ep}=\hcA$.
\end{lem}
\begin{proof}
Recall that $\Ann(m\A)=(\Z[1/m]/\Z)^\N$. In particular, both $\Ann(m\cA)$ and $U_{J,\ep}$ are product sets. So it suffices to show that for any $j \in J$ the projection of $\Ann(m\cA)+U_{J,\ep}$ onto the $j$-th coordinate is $\R/\Z$. Indeed, this projection is $\{k/m+\ep'+\Z:~0\leq k<m, ~|\ep'|<\ep\}$. If $m>\ep^{-1}$ then this set is all of $\R/\Z$.
\end{proof}
The first item now follows. Indeed, suppose that $f$ is a continuous function on $\Sub(\hcA)$. Then, by continuity, for each $\delta$ there are $J$ and $\ep$ such that $|f(\hcA)-f(H)|<\delta$ for each $H\in N_{U_{J,\ep}}(\hcA)$. Note that if $n_i>\ep^{-1}$ and $h\in \Hom(F_i,\hcA)$ we have that $\Ann(n_i\A)+h(F_i)\in N_{U_{J,\ep}}(\hcA)$ by the Lemma \ref{lem:cont1}. Thus
\[
\left|\lambda_i(f)-f(\hcA)\right|\leq \int_{\Hom(F_i,\hcA)} \left|f(\Ann(n_i\A)+h(F_i))-f(\hcA)\right|d\chi_{F_i}(h)\leq \delta
\]
where $\chi_{F_i}$ denotes Haar probability measure on $\Hom(F_i,\hcA)$. Since $\delta,f$ are arbitrary, this implies that if $n_i\to\infty$ then $\lambda_i \to \delta_\hcA$ as required. To prove the second item, we need the next lemma.
\begin{lem}\label{lem:top-2}
Let $V_{m,J,\epsilon}$ be the set of all $h\in\Hom(\Z/m\Z,\hcA)$ such that $h(\Z/m\Z)+U_{J,\ep}=\hcA$. Then for any fixed finite set $J \subset \N$ and any $\epsilon$, 
$$\lim_{m\to\infty} \chi_{\Z/m\Z}(V_{m,J,\epsilon}) = 1$$
where $\chi_{\Z/m\Z}$ denotes Haar probability measure on $\Hom(\Z/m\Z,\hcA)$. 
\end{lem}
\begin{proof}
Note that the map $h\mapsto h(1)$ identifies $\Hom(\Z/m\Z,\hcA)$ with $\Ann(m\A)=(\Z[1/m]/\Z)^\N$. For $j\in J$ define $v_j=h(1)_j\in \Z[1/m]/\Z$. Then $h(\Z/m\Z)+U_{J,\ep}=\hcA$ if and only if for each $j\in J$ 
$$\R/\Z = \{kv_j+\ep'+\Z:~0\leq k<m,~|\ep'|<\ep\}.$$
This in turn can happen only if $v_j\in \Z[1/d]/\Z$ for some $d > \ep^{-1}$. To show the statement of the lemma, it suffices to show that the ratio of all such $v_j$ in $\Z[1/m]/\Z$ goes to $1$ as $m\to\infty$. Or, what is the same, that the number of $v\in \Z[1/d]/\Z$, such that $d\leq \ep^{-1}$, divided by $m$, goes to $0$, but this is obvious.  
\end{proof}
From this lemma we have the second item. Indeed, given a continuous function $f$ on $\Sub(\hcA)$ and $\delta>0$ choose $J,\ep$ such that  $|f(\hcA)-f(H)|<\delta$ for each $H\in N_{U_{J,\ep}}(\hcA)$. Let $m_i$ be the maximum order of an element in $F_i$. Let 
$$Q_{i,J,\ep} = \{h \in \Hom(F_i,\hcA):~\Ann(n_i\A)+h(F_i)+U_{J,\ep}=\hcA.$$
Then $\chi_{F_i}(Q_{i,J,\ep}) \ge \chi_{F_i}(V_{m_i,J,\epsilon})$. Thus
\begin{equation*}
\begin{aligned}
&\left|\lambda_i(f)-f(\hcA)\right|\leq \int_{\Hom(F_i,\hcA)} \left|f(\Ann(n_i\A)+h(F_i))-f(\hcA)\right|~d\chi_{F_i}(h)\leq\\
& \int_{Q_{i,J,\ep}} \left|f(\Ann(n_i\A)+h(F_i))-f(\hcA)\right|~d\chi_{F_i}(h)+\\
&\int_{\Hom(F_i,\hcA)\setminus Q_{i,J,\ep}} \left|f(\Ann(n_i\A)+h(F_i))-f(\hcA)\right|~d\chi_{F_i}(h)\\
& \leq \delta + (1-\chi_{F_i}(Q_{i,J,\ep})) \|f\|_\infty \le \delta + (1-\chi_{\Z/m_i\Z}(V_{m_i,J,\ep}))\|f\|_\infty.
\end{aligned}
\end{equation*}
Lemma \ref{lem:top-2} now implies the second item.

To prove the third item, we need the next two lemmas.
\begin{lem}\label{lem:ratio00}
Fix $m >1$. Consider the set \[\{(a_1,\dots,a_k)\in(\Z[1/m]/\Z)^k\,|\,a_1,\dots, a_k\text{ generate }\Z[1/m]/\Z\}.\] Then the number of elements in this set divided by $m^k$ goes to $1$ as $k$ goes to infinity.
\end{lem}
\begin{proof}
It suffices to show that the number of elements in the complement of this set divided by $m^k$ goes to $0$. Note that the complement is the union, over $d|m$, $d\neq m$, of sets $(\Z[1/d]/\Z)^k$. Clearly $d^k/m^k\to 0$ as $k\to\infty$.

\end{proof}
\begin{lem}\label{lem:cont3}
Consider the set of those $h\in \Hom((\Z/m\Z)^k,\hcA)$ such that $h((\Z/m\Z)^k)+U_{J,\ep}\supset\Ann(m\A)$. The Haar measure of this set goes to $1$ as $k$ goes to infinity (for fixed $J,\ep,m$).
\end{lem}
\begin{proof}
Denote by $e_s$, $1\leq s\leq k$ the standard generators of $(\Z/m\Z)^k$. Since $J$ is finite, and projections onto coordinates of $\hcA=(\R/\Z)^\N$ are independent, it suffices to show that for each $j\in J$ 
$$\chi_{(\Z/m\Z)^k}\left(\left\{h \in \Hom( (\Z/m\Z)^k, \hcA):~ \textrm{Proj}_j( \textrm{Image}(h)) = \Z[1/m]/\Z\right\}\right) \to 1$$
as $k\to\infty$ where $\textrm{Proj}_j:(\R/\Z)^\N \to \R/\Z$ denotes projection onto the $j$-th coordinate. The number on the left is equal to the cardinality of the set \[\{(a_1,\dots,a_k)\in(\Z[1/m]/\Z)^k\,|\,a_1,\dots, a_k\text{ generate }\Z[1/m]/\Z\}.\] divided by $m^k$. By the Lemma \ref{lem:ratio00} we are done. 
\end{proof}

We will now prove the third item. Fix a finite set $J \subset \N$ and $\ep>0$. Recall that $F_i\cong F\oplus (\oplus_{j=1}^s(\Z/q_j^{t_j}\Z)^{m_j(i)})$, and let $k(i)=\min_j\{m_j(i)\}$, $F'_i=\oplus_{j=1}^s(\Z/q_j^{t_j}\Z)^{m_j(i)}$ and $F''_i=\oplus_{j=1}^s(\Z/q_j^{t_j}\Z)^{k(i)}=(\Z/m\Z)^{k(i)}$, where $m=\prod_j q_j^{t_j}$. By assumption, $k(i)\to\infty$ as $i\to\infty$. Note that $F''_i\subset F'_i$. Thus by Lemma \ref{lem:cont3} we have that the set $Q_i$ of those $h\in \Hom(F'_i,\hcA)$ such that $h(F'_i)+U_{J,\ep}\supset\Ann(m\A)$ has Haar measure going to $1$ as $i\to\infty$. In other words $Q_i$ is the set of those $h$ such that $h(F'_i)\in N_{U_{J,\ep}}(\Ann(m\A))$. We also have that for $h\in Q_i$ and for any $h_1\in\Hom(F,\hcA)$, $\Ann(n_i\A)+h(F'_i)+h_1(F)\in N_{U_{J,\ep}}(\Ann(n_i\A)+\Ann(m\A)+h_1(F))$.

We have now, for a continuous function $f$ on $\Sub(\hcA)$, using Fubini's Theorem
\begin{equation*}
\begin{aligned}
& \lambda_i(f)=\int_{\Hom(F,\hcA)}\int_{\Hom(F'_i),\hcA} f(\Ann(n_i\A)+h(F'_i)+h_1(F))\,d\chi_{F'_i}(h)\,d\chi_{F}(h_1).\\
\end{aligned}
\end{equation*}
It now follows, as in the proof of the second item, that the interior integral converges to $f(\Ann(n_i\A)+\Ann(m\A)+h_1(F))$, and so $\lambda_i(f)$ goes to 
\[
\int_{\Hom(F,\hcA)}f(\Ann(n_i\A)+\Ann(m\A)+h_1(F))\,d\chi_{F}(h_1)
\]
as $i\to\infty$. Because $f$ is arbitrary, this implies the third item.

The last, fourth, item is obvious.
\end{proof}

\subsection{Infinite product of Galois fields}\label{GO1}
In the last two subsections we derive by different means some results of \cite{GO09} on Grassmanians over a finite field.
Let $V$ be a locally compact vector space over $\F_q$. Endow  $\Aut_{\F_q}(V)$ with the Braconnier topology.  In this setup we use the duality of vector spaces instead of Pontryagin duality: if $V$ is a locally compact vector space over $\F_q$, then $V^*=\Hom_{\F_q}(V,\F_q)$, the set of continuous linear maps, endowed with the compact-open topology.

Since $\Aut_{\F_q}(V)\leq \Aut(V)$, Proposition \ref{prop:Pontryagin} applies to show that $\Aut_{\F_q}(V)\simeq \Aut_{\F_q}(V^*)$.
Note that $V^{**}\simeq V$ and $((\F_q)^\N)^*\simeq \oplus_\N \F_q$.

\begin{defn}
We call $\phi$ in  $\Aut_{\F_q}((\F_q)^\N)$ (respectively in $\Aut_{\F_q}(\oplus_\N \F_q)$) {\em finitary} if it changes only a finite number of coordinates. It is clear that finitary automorphisms form a group. Denote it by $\Aut_{\F_q}^{\textrm{fin}}((\F_q)^\N)$ (respectively $\Aut_{\F_q}^{\textrm{fin}}(\oplus_\N \F_q)$).
\end{defn}
It is easy to check that the isomorphism $\phi\mapsto (\phi^*)^{-1}$ between $\Aut_{\F_q}((\F_q)^\N)$ and $\Aut_{\F_q}(\oplus_\N \F_q)$ induces an isomorphism $\Aut_{\F_q}^{\textrm{fin}}((\F_q)^\N)\simeq \Aut_{\F_q}^{\textrm{fin}}(\oplus_\N \F_q).$

\begin{prop}\label{GO}
The subgroup of finitary automorphisms is dense.
\end{prop}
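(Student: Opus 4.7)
The plan is to reduce to the discrete case $\Aut_{\F_q}(\oplus_\N \F_q)$ via the topological-group isomorphism $\phi \mapsto (\phi^*)^{-1}$ established in this subsection; since that isomorphism is noted to send finitary automorphisms to finitary automorphisms, density on one side transports to density on the other. Because $\oplus_\N \F_q$ is discrete, the Braconnier topology on $\Aut_{\F_q}(\oplus_\N \F_q)$ agrees with the compact-open topology, which here is simply pointwise convergence. A basic neighborhood of $\psi$ therefore has the form
\[
N(\psi; v_1,\dots,v_n) = \{\psi' \in \Aut_{\F_q}(\oplus_\N \F_q) : \psi'(v_i)=\psi(v_i),\ 1\le i\le n\}
\]
for some finite collection of vectors $v_1,\dots,v_n \in \oplus_\N \F_q$.

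Fix such a neighborhood. The natural choice is to take a finite set $T\subseteq \N$ large enough that $\supp(v_i) \cup \supp(\psi(v_i)) \subseteq T$ for every $i$; then both $U := \Span(v_1,\dots,v_n)$ and $\psi(U)$ are subspaces of the finite-dimensional vector space $V_T := \Span(e_j : j\in T)$, and the restriction $\psi|_U : U \to \psi(U)\subseteq V_T$ is an injective $\F_q$-linear map whose image sits in $V_T$.

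The one real step is to extend $\psi|_U$ to an automorphism $\tau$ of $V_T$. Since $\dim_{\F_q} U = \dim_{\F_q} \psi(U)$, the quotients $V_T/U$ and $V_T/\psi(U)$ have the same finite dimension, so one may choose complements $U', U'' \le V_T$ with $V_T = U\oplus U' = \psi(U)\oplus U''$ and any $\F_q$-linear isomorphism $\sigma : U' \to U''$; then $\tau := \psi|_U \oplus \sigma$ is an automorphism of $V_T$ extending $\psi|_U$. Defining $\psi'$ to be $\tau$ on $V_T$ and the identity on each $e_j$ with $j\notin T$ gives a finitary automorphism of $\oplus_\N \F_q$ that agrees with $\psi$ on each $v_i$, so $\psi' \in N(\psi; v_1,\dots,v_n)\cap \Aut^{\mathrm{fin}}_{\F_q}(\oplus_\N \F_q)$.

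The only potential obstacle is the extension step, but it reduces to the elementary observation that a linear isomorphism between subspaces of a finite-dimensional $\F_q$-space extends to an automorphism of the ambient space precisely because the two complementary quotients have equal dimension. Everything else (identifying the Braconnier topology with pointwise convergence on a discrete group, transporting density through the duality isomorphism, and checking that $\psi'$ is a homeomorphism — automatic on a discrete group) is immediate.
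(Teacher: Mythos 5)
Your proof is correct and follows essentially the same route as the paper: reduce via the duality isomorphism to the discrete side, observe that the Braconnier topology on $\Aut_{\F_q}(\oplus_\N\F_q)$ is the topology of pointwise convergence, restrict attention to a finite-dimensional truncation large enough to contain both the prescribed vectors and their images, extend the resulting injective linear map to an automorphism of that truncation by a dimension count, and complete by the identity on the remaining coordinates. The only cosmetic difference is that the paper works directly with the standard basis vectors $e_1,\dots,e_n$ of a coordinate subspace $\oplus_1^n\F_q$ containing the finite set, while you take the span of arbitrary prescribed vectors and choose complements; this is a presentation choice, not a different argument.
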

\begin{proof}
Since $(\F_q)^\N$ is compact and $\oplus_\N \F_q$ is discrete, the Braconnier topology in both cases coincides with the compact-open topology. 

It suffices to show that $\Aut_{\F_q}^{\textrm{fin}}(\oplus_\N \F_q)$ is dense in $\Aut_{\F_q}(\oplus_\N \F_q)$. Since $\oplus_\N \F_q$ is discrete, the compact-open topology coincides with the topology of pointwise convergence. Take any $\phi\in \Aut_{\F_q}(\oplus_\N \F_q)$ and finite subset $F \subset \oplus_\N \F_q$. It suffices to show there exists  $\psi\in\Aut_{\F_q}^{\textrm{fin}}(\oplus_\N \F_q)$ with $\phi\resto F = \psi\resto F$.

Note $F \subset \oplus_1^n \F_q$ for some $n$. Let $e_1,\dots e_n$ be the standard basis of  $\oplus_1^n \F_q$. Then $\phi(e_1),\dots \phi(e_n)$ is contained in some $\oplus_1^m \F_q$ for some $m$.  Let $N=\max(n,m)$. It follows that there is $\psi\in \Aut_{\F_q}(\oplus_1^N \F_q)$, such that $\psi$ and $\phi$ coincide on $\oplus_1^n \F_q$ and therefore $\psi$ and $\phi$ coincide on $F$.
\end{proof}

\begin{cor}
$\CRS_{\F_q}(\oplus_\N \F_q)$ consists of all measures that are $\Aut_{\F_q}^{{\textrm{fin}}}(\oplus_\N \F_q)$-invariant.
\end{cor}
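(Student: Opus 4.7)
The plan is to show both inclusions. The inclusion $\CRS_{\F_q}(\oplus_\N \F_q) \subseteq \{\text{$\Aut_{\F_q}^{\textrm{fin}}$-invariant measures}\}$ is immediate since $\Aut_{\F_q}^{\textrm{fin}}(\oplus_\N \F_q) \leq \Aut_{\F_q}(\oplus_\N \F_q)$, so any $\Aut_{\F_q}$-invariant measure is automatically invariant under the finitary subgroup.

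For the reverse inclusion, suppose $\mu$ is a Borel probability measure on $\Subp(\oplus_\N \F_q)$ which is $\Aut_{\F_q}^{\textrm{fin}}$-invariant, and let $\phi \in \Aut_{\F_q}(\oplus_\N \F_q)$ be arbitrary. I would fix a continuous function $f : \Subp(\oplus_\N \F_q) \to \R$ and aim to show $\int f(\phi H)\,d\mu(H) = \int f(H)\,d\mu(H)$. By Proposition \ref{GO}, there exists a sequence $\phi_n \in \Aut_{\F_q}^{\textrm{fin}}(\oplus_\N \F_q)$ with $\phi_n \to \phi$ in the Braconnier topology. By Proposition \ref{cont_eval}, the map $(\psi, H) \mapsto \psi(H)$ is continuous, so for each fixed $H$ we have $\phi_n H \to \phi H$ in $\Subp(\oplus_\N \F_q)$, and hence $f(\phi_n H) \to f(\phi H)$ pointwise. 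Since $f$ is continuous on the compact space $\Subp(\oplus_\N \F_q)$, it is bounded, so by the bounded convergence theorem
\[
\int f(\phi H)\,d\mu(H) = \lim_{n\to\infty} \int f(\phi_n H)\,d\mu(H) = \int f(H)\,d\mu(H),
\]
where the last equality uses that each $\phi_n$ is finitary. Since continuous functions separate Borel measures on a compact metrizable space, this gives $\phi_*\mu = \mu$, so $\mu \in \CRS_{\F_q}(\oplus_\N \F_q)$.

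No step seems to present a serious obstacle: the argument is a standard density-plus-continuity gluing, relying on the two key ingredients already established in the paper (continuity of the evaluation action and density of finitary automorphisms). The only subtlety worth checking carefully is that the topology used in Proposition \ref{GO} (compact-open, agreeing here with the Braconnier topology) is the one in which Proposition \ref{cont_eval} guarantees joint continuity of the action; this is indeed the case since $\oplus_\N \F_q$ is discrete and $\Subp(\oplus_\N \F_q)$ inherits the Chabauty topology used in Lemma \ref{top_sub}.
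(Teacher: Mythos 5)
Your proof is correct and is exactly the argument the paper intends: the paper's one-line proof invokes Propositions \ref{GO} (density of finitary automorphisms) and \ref{cont_eval} (joint continuity of the $\Aut$-action on $\Sub$), and your write-up simply spells out the standard density-plus-bounded-convergence gluing those two propositions enable. The only micro-point you implicitly use is that sequences suffice (not just nets), which holds because $\Aut_{\F_q}(\oplus_\N\F_q)$ with the topology of pointwise convergence on a countable discrete space is metrizable.
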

\begin{proof}
Follows from Propositions \ref{GO} and \ref{cont_eval}.
\end{proof}
This shows that elements of $\CRS_{\F_q}(\oplus_\N \F_q)$ are exactly the random spaces defined in \cite{GO09}.

%The following two theorems are equivalent. Then proof is the same as for Theorems \ref{thm:hcA} and \ref{thm:cAn} (replacing $\hat{G}$ by $G^*$). 

%\begin{thm}\label{thm:hcB}
% Let $\lambda \in \CRS_{\F_q}^e( (\F_q)^\N)$, $\lambda\neq \delta_{(\F_q)^\N}$. Then there is unique $m\in \Z_+$ such that the random subgroup $h(\F_q^m)$ has law $\lambda$, where a random homomorphism $h\in\Hom_{\F_q}(\F_q^m,(\F_q)^\N)$ is given by the Haar measure on the compact group $\Hom_{\F_q}(\F_q^m,(\F_q)^\N)$. 
 %\end{thm}

%In the next Theorem we give a characterisation of $\CRS_{\F_q}^e(\oplus_\N \F_q)$.
%\begin{thm}\label{thm:cBn}
% Let $\lambda \in \CRS_{\F_q}^e(\oplus_\N \F_q)$, $\lambda\neq \delta_{\oplus_\N \F_q}$. Then there is unique $m\in\Z_+$ such that the random subgroup $\Ker(h)$ has law $\lambda$, where a random homomorphism $h\in\Hom_{\F_q}(\oplus_\N \F_q,\F_q^m)$ is given by the Haar measure on the compact group $\Hom_{\F_q}(\oplus_\N \F_q,\F_q^m)$. 
 % \end{thm}
The proofs of Theorems \ref{thm:hcB} and \ref{thm:cBn} proceed in the same way as the proof of Theorems \ref{thm:hcA} and \ref{thm:cAn} above, using the following simplified versions of the main auxiliary results.

The next result corresponds with Theorem \ref{thm:key2}:
\begin{thm}\label{t1}
Let $V$ be a countable vector space over $\F_q$ and let $\chi$ be an $\Aut_{\F_q}((\F_q)^\N)$-invariant and indecomposable Borel probability measure on $\Hom_{\F_q}(V,(\F_q)^\N)$. Then  there exist a subspace $W\le V$ such that $\chi$ is the normalized Haar measure on $\Hom_{\F_q}(V/W,(\F_q)^\N)$.
\end{thm}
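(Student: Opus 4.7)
The plan is to imitate the proof of Theorem \ref{thm:key2} with the multiplicative group of $\T$ replaced by the additive (and $\F_q$-linear) structure of $(\F_q)^\N$. Identify $\Hom_{\F_q}(V,(\F_q)^\N)$ with $(V^*)^\N$ via $h\mapsto (h_1,h_2,\dots)$, where $V^*=\Hom_{\F_q}(V,\F_q)$ carries the topology of pointwise convergence (and is compact metrizable if we first show we may reduce to the case $V$ is at most countably generated, or simply work with pointwise convergence throughout). For every permutation $\sigma:\N\to\N$, the map $\rho_\sigma(x_1,x_2,\dots)=(x_{\sigma(1)},x_{\sigma(2)},\dots)$ belongs to $\Aut_{\F_q}((\F_q)^\N)$, and $(\rho_\sigma)_*h=(h_{\sigma(1)},h_{\sigma(2)},\dots)$. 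Thus the coordinates $(h_i)_{i\in\N}$ form an exchangeable sequence of $V^*$-valued random variables, and the de Finetti–Hewitt–Savage theorem yields a unique Borel probability measure $\omega$ on $\cP(V^*)$ with
\[
\chi=\int \mu^\N\,d\omega(\mu),
\]
where $\mu^\N$ denotes the product measure on $(V^*)^\N$.

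Next I would exploit two families of $\F_q$-linear automorphisms of $(\F_q)^\N$: the shears $\psi_{i,j,c}$ sending $x_i\mapsto x_i+c\,x_j$ ($i\neq j$, $c\in\F_q$) and leaving all other coordinates fixed, and the scalings $\sigma_{i,c}$ sending $x_i\mapsto c x_i$ ($c\in\F_q^\times$) and leaving the others fixed. Applying $(\psi_{1,2,c})_*$ to $\chi$ and repeating the de Finetti argument as in the proof of Theorem \ref{thm:key2} (using that $h_2,h_3,\dots$ are already iid-$\mu$) shows that for $\omega$-a.e.\ $\mu$ and for $\mu$-a.e.\ $z\in V^*$ one has $(A_{cz})_*\mu=\mu$ for all $c\in\F_q$, where $A_z(k)=k+z$. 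By the same continuity/density argument used in Theorem \ref{thm:key2}, the support $S_\mu$ of $\mu$ is a closed $\F_q$-linear subspace of $V^*$ and $\mu$ is the normalized Haar measure on $S_\mu$. (The scalings $\sigma_{1,c}$ give nothing new here, since $\F_q$-linearity of $S_\mu$ is already forced by the shears together with the additive structure, but they confirm the picture.)

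Now translate back via duality. A closed $\F_q$-linear subspace $S_\mu\le V^*$ is exactly the annihilator of some subspace $W\le V$; that is, $S_\mu=\{h\in V^*:h(W)=0\}=\Hom_{\F_q}(V/W,\F_q)$. Consequently $\mu^\N$ is the Haar measure on $\Hom_{\F_q}(V/W,\F_q)^\N=\Hom_{\F_q}(V/W,(\F_q)^\N)$. Since the natural inclusion $\Hom_{\F_q}(V/W,(\F_q)^\N)\hookrightarrow \Hom_{\F_q}(V,(\F_q)^\N)$ is $\Aut_{\F_q}((\F_q)^\N)$-equivariant, each measure of this form is itself $\Aut_{\F_q}((\F_q)^\N)$-invariant. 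Thus the decomposition $\chi=\int \mu^\N\,d\omega(\mu)$ is a convex integral of $\Aut_{\F_q}((\F_q)^\N)$-invariant measures, and the indecomposability hypothesis on $\chi$ forces $\omega$ to be concentrated on a single $\mu$, giving $\chi$ as the Haar measure on $\Hom_{\F_q}(V/W,(\F_q)^\N)$ for one subspace $W\le V$.

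The step that requires the most care is verifying that the translation-invariance of $\mu$ by $\mu$-almost every $z$ upgrades to invariance by every $z$ in the support and then to the conclusion that $\mu$ is Haar on a closed subgroup; this is exactly where the proof of Theorem \ref{thm:key2} uses a density/continuity argument on the support $S_\mu$. In the present setting this goes through verbatim because $(V^*,+)$ is a Hausdorff topological group under pointwise convergence and the shears $\psi_{i,j,c}$ play the role that $\psi$ played in the original argument; the $\F_q$-linear structure of $S_\mu$ comes essentially for free from closure under the operations $z\mapsto cz$ induced by varying $c\in\F_q$ in the shears. Everything else is a direct translation.
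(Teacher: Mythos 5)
Your proof is correct and follows the approach the paper intends: the paper never writes out a proof of Theorem \ref{t1}, stating only that it is proved ``similar to'' Theorem \ref{thm:key2}, and your argument carries out that translation faithfully (exchangeability via coordinate permutations, de Finetti--Hewitt--Savage, shears forcing translation-invariance of the conditional law, density upgrade to all of $S_\mu$, duality, then indecomposability collapsing $\omega$ to a point mass). In particular you correctly flag the one genuinely new point in the $\F_q$ setting: when $q$ is a proper prime power the support $S_\mu$ must be shown to be $\F_q$-linear, not merely a closed additive subgroup, so that it is the annihilator of an $\F_q$-subspace $W\le V$, and the shears $\psi_{i,j,c}$ with $c$ ranging over all of $\F_q$ provide exactly this.
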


%\begin{defn}
%Define $\overline{\Image}:\Hom_{\F_q}(G,(\F_q)^\N) \to \Subp((\F_q)^\N)$ by: $\overline{\Image}(h)$ is the closure of the image of $h$.
%\end{defn}
%We need the following proposition.
%We will prove this in the next section.
%Note that if $V$ be a countable vector space over $\F_q$, then either $V$ is finite or $V=\oplus_\N \F_q$. 

The next result corresponds with Proposition \ref{prop:haar}:
\begin{prop}\label{p1}
Let $V$ be a countable vector space over $\F_q$, $\chi$ denote the Haar measure on $\Hom_{\F_q}(V,(\F_q)^\N)$ and $h\in \Hom_{\F_q}(V,(\F_q)^\N)$ be a random homomorphism with law $\chi$. %For $h\in \Hom(G,\cA)$, let $\overline{\Image}(h) \in \Sub(\hcA)$ denote the closure of the image of $h$. % and $\overline{\Image}:\Hom(G,\hcA) \to \Sub(\hcA)$ the map $\overline{\Image}(h)$ is the closure of the image of $h$. Then
\begin{itemize}
\item If $V$ is finite then $h$ is injective a.s.
\item If $V=\oplus_\N \F_q$ then $\overline{\Image}(h) = (\F_q)^\N$ a.s. 
\end{itemize}
Note that if $V$ is a countable vector space over $\F_q$, then either $V$ is finite or $V=\oplus_\N \F_q$.
\end{prop}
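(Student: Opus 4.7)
My plan is to follow the strategy used in Proposition \ref{prop:haar}, noting that things become considerably cleaner here because every nonzero element of $V$ has additive order $q$ and because the Haar measure $\eta$ on $(\F_q)^\N$ is transparent: it is the infinite product of uniform measures on $\F_q$, so $\eta(\{0\}) = \lim_{n\to\infty} q^{-n}=0$ and more generally $\eta(\Ker w)=1/q$ for any nonzero $\F_q$-linear functional $w:(\F_q)^\N\to \F_q$.

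For the first item, I would fix a nonzero $v \in V$ and consider the evaluation map $\mathrm{ev}_v: \Hom_{\F_q}(V,(\F_q)^\N) \to (\F_q)^\N$, $h\mapsto h(v)$. Because $\F_q v$ is a direct summand of the $\F_q$-vector space $V$ (every subspace is, so the analogue of Lemma \ref{lem:extension} is automatic), this map is a continuous surjective $\F_q$-linear homomorphism between compact groups. Hence $(\mathrm{ev}_v)_*\chi = \eta$, and $\bP(h(v)=0)=\eta(\{0\})=0$. Since $V$ is finite, a union bound over the nonzero elements of $V$ shows that $h$ is injective almost surely.

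For the second item, I would identify $\Hom_{\F_q}(\oplus_\N \F_q,(\F_q)^\N)$ with $((\F_q)^\N)^\N$ via $h \mapsto (h(e_n))_{n\in \N}$, where $(e_n)$ is the standard basis of $\oplus_\N \F_q$; under this identification $\chi$ becomes the infinite product $\eta^\N$, so $h(e_1),h(e_2),\ldots$ are i.i.d.\ $\eta$-distributed. Density of $\Image(h)$ in $(\F_q)^\N$ is equivalent, by duality, to triviality of its annihilator in $((\F_q)^\N)^* \simeq \oplus_\N \F_q$. For a fixed nonzero $w\in \oplus_\N \F_q$ the variables $w(h(e_i))$ are i.i.d.\ with law the pushforward of $\eta$ by a nonzero $\F_q$-linear functional, which is uniform on $\F_q$; hence
$$\bP\bigl(w(h(e_i))=0 \text{ for all } i\bigr) = \lim_{n\to\infty} q^{-n}=0.$$
Countable subadditivity over the countably many nonzero $w\in \oplus_\N \F_q$ then yields that the annihilator of $\overline{\Image}(h)$ is trivial a.s., i.e.\ $\overline{\Image}(h)=(\F_q)^\N$ almost surely.

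I do not expect any real obstacle: the complications in Proposition \ref{prop:haar} that arise from handling elements of infinite order or of various finite orders simply do not occur in this setting, and the proof reduces to straightforward product-Haar measure computations together with one application of duality.
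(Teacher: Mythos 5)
Your proof is correct, and the paper itself only says the proof of Proposition~\ref{p1} is ``similar'' to that of Proposition~\ref{prop:haar} without spelling it out. For the second item your argument is essentially the one implied by the paper's treatment of the $G=\cA_n$ case of Proposition~\ref{prop:haar}: identify $h$ with an iid sequence, pass to the annihilator by duality, compute a geometric probability, and sum over the countably many nonzero functionals. For the first item you take a cleaner route than the paper's model argument. The paper's proof of the finite case of Proposition~\ref{prop:haar} invokes $\Aut(\hcA)$-ergodicity of $\chi$ (which ultimately comes from Theorem~\ref{thm:key2}) to conclude that $\Ker(h)$ is a.s.\ a fixed subgroup $H$, and then uses translation by a fixed injective $h'$ to force $H=0$. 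You instead observe directly that each evaluation map $\mathrm{ev}_v$ is a surjective continuous homomorphism of compact groups, hence pushes Haar forward to Haar, so $\bP(h(v)=0)=\eta(\{0\})=0$; a union bound over the finitely many nonzero $v$ finishes. This sidesteps ergodicity entirely and is both shorter and self-contained; it would in fact also give a simpler proof of the first bullet of Proposition~\ref{prop:haar}, since the image of $\mathrm{ev}_v$ there is $\Ann(m\cA)$ for $m\ge 2$, still an infinite compact group with nonatomic Haar measure.
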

The proofs of Theorem \ref{t1} and Proposition \ref{p1} are similar to the proofs of Theorem \ref{thm:key2} and Proposition \ref{prop:haar}.

%We call a linear invertible operator on $\oplus_\N \F_q$ finitary, if it changes only a finite number of coordinates.
%\begin{prop}
%Any measure on $\Subp(\oplus_\N \F_q)$, which is invariant with respect to all finitary operators, is in fact invariant with respect to all linear continuous invertible operators.
%\end{prop}
%This follows from the following
%\begin{lem}
%The group of all finitary linear invertible operators is dense in the group of all linear continuous invertible operators
%\end{lem}

\subsection{Finite dimensional approximation}\label{GO2}
%In this subsection we will consider the space $\Subp(\oplus_\N \F_q)$, where $q$ is a power of prime. 

Let $F_\kappa=(\F_q)^\kappa$, and denote by $\mu_\kappa$ the measure on $\Subp(\oplus_\N \F_q)$ that is the law of $\Ker(h)$, where $h\in\Hom_{\F_q}(\oplus_\N \F_q,F_\kappa)$ is a random homomorphism with law equal to Haar measure (see Theorem \ref{thm:cBn}). Note $\mu_\kappa$ is  $\Aut_{\F_q}(\oplus_\N \F_q)$-invariant. Let $V_n=\oplus_1^n \F_q\subset \oplus_\N \F_q$.

Consider the map $\Subp(\oplus_\N \F_q)\to\Subp(\oplus_\N \F_q)$ given by $X\mapsto X\cap V_n$. Let $\mu_{\kappa,n}$ denote the probability measure on $\Subp(\oplus_\N \F_q)$ which is the pushforward of $\mu_\kappa$ under this map. Note $\mu_{\kappa,n}$ is supported on $\Subp(V_n)$. Because $\mu_\kappa$ is  $\Aut_{\F_q}(\oplus_\N \F_q)$-invariant, $\mu_{\kappa,n}$ is  $\Aut_{\F_q}(V_n)$-invariant.

We are interested in the following numbers: $$\tilde{v}_{n,k}:=\mu_\kappa(\{X\in \Subp(\oplus_\N\F_q)\,|\, \dim_{\F_q}(X\cap V_n)=k\}),$$ $$v_{n,k}:=\mu_{\kappa,n}(\{W\}),\quad W\subseteq V_n,\quad \dim_{\F_q} W=k.$$
Because $\mu_{\kappa,n}$ is  $\Aut_{\F_q}(V_n)$-invariant, $v_{n,k}$ does not depend on a specific choice of subspace $W$. Applying  $\Aut_{\F_q}(V_n)$-invariance again, we obtain:
\begin{lem}
$\tilde{v}_{n,k}=d_{n,k}v_{n,k},$ where $d_{n,k}$ is the number of $k$-dimensional subspaces in $\F_q^n$.
\end{lem}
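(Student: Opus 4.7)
The plan is a straightforward two-step argument that essentially just unpacks the definitions and then applies the invariance of $\mu_{\kappa,n}$. First I would rewrite $\tilde v_{n,k}$ purely in terms of the pushforward measure $\mu_{\kappa,n}$. Since the event $\{X : \dim_{\F_q}(X\cap V_n)=k\}$ is the preimage under $X \mapsto X \cap V_n$ of the event $\{W \in \Subp(V_n) : \dim_{\F_q} W = k\}$, the definition of pushforward gives
$$\tilde v_{n,k} = \mu_\kappa(\{X : \dim_{\F_q}(X \cap V_n) = k\}) = \mu_{\kappa,n}(\{W \in \Subp(V_n) : \dim_{\F_q} W = k\}).$$

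Next I would decompose this event into the disjoint union of singletons $\{W\}$ as $W$ ranges over the finite set of $k$-dimensional subspaces of $V_n$. There are $d_{n,k}$ such subspaces by definition, and by $\Aut_{\F_q}(V_n)$-invariance of $\mu_{\kappa,n}$ (noted just above the lemma), all of them carry the same $\mu_{\kappa,n}$-mass, namely $v_{n,k}$, since $\Aut_{\F_q}(V_n)$ acts transitively on $k$-dimensional subspaces of $V_n$. Summing over the $d_{n,k}$ atoms therefore yields
$$\tilde v_{n,k} = \sum_{\substack{W \subseteq V_n \\ \dim_{\F_q} W = k}} \mu_{\kappa,n}(\{W\}) = d_{n,k}\, v_{n,k},$$
which is the claim.

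There is no real obstacle here: both facts used — that the pushforward measure $\mu_{\kappa,n}$ is supported on $\Subp(V_n)$ and that it is $\Aut_{\F_q}(V_n)$-invariant — were already established in the paragraph preceding the lemma, and the transitivity of $\Aut_{\F_q}(V_n)$ on $k$-dimensional subspaces is the standard linear-algebra fact that any ordered basis of one such subspace extends to a basis of $V_n$ and can be mapped to any ordered basis of another. The only subtlety worth mentioning is that $\Subp(V_n)$ is finite, so there are no measurability issues in passing from the event to its finite disjoint decomposition.
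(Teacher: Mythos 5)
Your proof is correct and fills in exactly the argument the paper leaves implicit: the paper simply states that the lemma follows by "applying $\Aut_{\F_q}(V_n)$-invariance again," and your two steps (unpacking the pushforward, then summing over the $d_{n,k}$ equal-mass atoms using transitivity) are the intended justification. Nothing to add.
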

%\begin{thm}\label{appro}
%Suppose that $n\geq 0,$ $n\geq k\geq 0,$ and $\kappa\geq n-k$. Then $\tilde{v}_{n,k}$ is equal to the number of matrices of size $\kappa\times n$ which have rank $n-k$ divided by the number of all matrices of size $\kappa\times n$. Otherwise $\tilde{v}_{n,k}=0$.
%\end{thm}
\begin{proof}[Proof of Theorem \ref{appro}]
Let $\chi$ denote the Haar probability measure on $\Hom_{\F_q}(\oplus_\N \F_q,\F_q^\kappa)$. By definition $\mu_\kappa = \Ker_*\chi$. Thus
$$\tilde{v}_{n,k} = \Ker_*\chi(\{h \in \Hom_{\F_q}(\oplus_\N \F_q,\F_q^\kappa):~\dim(\Ker(h \resto V_n)) = k\}).$$
The restriction map $R:\Hom_{\F_q}(\oplus_\N \F_q,\F_q^\kappa) \to \Hom_{\F_q}(V_n,\F_q^\kappa)$ defined by $R(h) = h\resto V_n$ is a surjective homomorphism and therefore takes Haar measure to Haar measure. So if $\chi'$ is the Haar measure on $\Hom_{\F_q}(V_n,\F_q^\kappa)$ then
$$\tilde{v}_{n,k} = \Ker_*\chi'(\{h \in \Hom_{\F_q}(V_n,\F_q^\kappa):~\dim(\Ker(h)) = k\}).$$
We observe that $\Hom_{\F_q}(V_n,\F_q^\kappa)$ is a finite set which may be identified with the set of all $\kappa\times n$ matrices with values in $\F_q$. So $\chi'$ is the uniform probability measure.  The Theorem is finished by observing that  $h \in \Hom_{\F_q}(V_n,\F_q^\kappa)$ satisfies $\dim(\Ker(h))=k$ if and only if the matrix representing $h$ has rank $n-k$.
\end{proof}

We define two quantities.
\[
t_n:=(q^n-1)(q^n-q)\cdots(q^n-q^{n-1}),
\]
\[
s_n:=(q^n-1)(q^{n-1}-1)\cdots(q-1).
\]

\begin{cor}\label{computation}
If $n\geq 0,$ $n\geq k\geq 0,$ and $\kappa\geq n-k$ then
\begin{equation*}
 \tilde{v}_{n,k}=q^{(n-k)(n-k-1)/2-\kappa n}\frac{s_\kappa s_n}{s_{(n-k)}s_{(\kappa-n+k)}s_{k}}
 \end{equation*}
 \end{cor}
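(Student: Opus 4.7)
The plan is to deduce the corollary directly from Theorem \ref{appro} by invoking the classical count of $\F_q$-matrices of given rank. By Theorem \ref{appro}, under the hypotheses $n \geq k \geq 0$ and $\kappa \geq n-k$, we have
\[
\tilde{v}_{n,k} = \frac{N(\kappa, n, n-k)}{q^{\kappa n}},
\]
where $N(\kappa,n,r)$ denotes the number of $\kappa \times n$ matrices over $\F_q$ of rank $r$. Thus the corollary reduces to evaluating $N(\kappa, n, n-k)$.

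To count $N(\kappa,n,r)$ I would use the standard rank factorization: any rank-$r$ matrix $M$ of size $\kappa \times n$ can be written as $M = AB$ with $A$ a $\kappa \times r$ matrix of full column rank $r$ and $B$ an $r \times n$ matrix of full row rank $r$, and two such factorizations $(A,B)$ and $(A',B')$ give the same $M$ if and only if $A' = Ag^{-1}$, $B' = gB$ for some $g \in GL_r(\F_q)$. Therefore
\[
N(\kappa,n,r) = \frac{\bigl(\prod_{i=0}^{r-1}(q^\kappa - q^i)\bigr)\bigl(\prod_{i=0}^{r-1}(q^n - q^i)\bigr)}{\prod_{i=0}^{r-1}(q^r - q^i)}.
\]
The hypothesis $\kappa \geq n-k = r$ ensures that the numerator is nonzero, matching the case distinction in Theorem \ref{appro}.

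The final step is a routine algebraic simplification using the identities
\[
\prod_{i=0}^{r-1}(q^N - q^i) = q^{r(r-1)/2}\,\frac{s_N}{s_{N-r}}
\qquad (N \geq r),
\qquad t_r = q^{r(r-1)/2}\, s_r,
\]
applied with $N = \kappa$ and $N = n$ in the numerator, and with $N = r$ in the denominator. Substituting $r = n-k$ and using $\kappa - r = \kappa - n + k$ yields
\[
N(\kappa, n, n-k) = \frac{q^{(n-k)(n-k-1)/2}\, s_\kappa\, s_n}{s_{n-k}\, s_{\kappa - n + k}\, s_k},
\]
and dividing by $q^{\kappa n}$ gives exactly the stated formula. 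There is no real obstacle here; the content is entirely in Theorem \ref{appro} together with the well-known rank-factorization count, and the rest is bookkeeping with the $s_N$ and $t_N$ symbols defined just above the corollary.
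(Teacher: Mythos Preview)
Your proof is correct. Both you and the paper reduce to Theorem \ref{appro} and then count $\kappa\times n$ matrices of rank $r=n-k$ over $\F_q$, but the counting methods differ. The paper argues via the orbit--stabilizer theorem: $GL_\kappa(q)\times GL_n(q)$ acts on $M_{\kappa,n}(\F_q)$ with orbits the sets of matrices of fixed rank, and one computes the stabilizer of the block matrix $M_r=\begin{pmatrix}1_r&0\\0&0\end{pmatrix}$ explicitly to obtain $\textrm{rank}_{\kappa,n,r}=t_\kappa t_n/(t_r t_{\kappa-r}t_{n-r}q^{r(\kappa-r)+r(n-r)})$, then simplifies using $t_N=q^{\binom{N}{2}}s_N$. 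Your rank-factorization argument ($M=AB$ with $A$ of full column rank, $B$ of full row rank, unique up to $GL_r$) is more elementary in that it avoids computing a stabilizer and directly writes down the products $\prod_{i=0}^{r-1}(q^N-q^i)$; the paper's approach has the mild advantage of making the $GL\times GL$ symmetry manifest. Both simplifications land on the same expression $q^{r(r-1)/2}s_\kappa s_n/(s_r s_{\kappa-r}s_{n-r})$, so the bookkeeping at the end is identical.
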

\begin{proof}
Recall that 
\begin{equation*}
\begin{aligned}
& |GL_n(q)|=t_n=q^{\binom{n}{2}}s_n.
\end{aligned}
\end{equation*}
To see this, observe that $GL_n(q)$ is in 1-1 correspondence with the set of all $n$-tuples $(v_1,\ldots, v_n)$ such that each $v_i \in \F_q^n$, $v_1\ne 0$ and for each $i<n$, $v_{i+1}$ is not in the span of $v_1,\ldots, v_i$. Hence there are $q^n-1$ choices for $v_1$, $q^{n}-q$ choices for $v_2$, $q^n-q^2$ choices for $v_3$ and so on.

The group $GL_\kappa(q)\times GL_n(q)$ acts on the set $M_{\kappa,n}(\F_q)$ of all matrices of size $\kappa\times n$ by the rule $(A,B)M=AMB^{-1}$. The orbits of this action are exactly the sets of matrices of constant rank. 
The matrix
$$
M_r:=\left(\begin{matrix}
 1_{r\times r} & 0_{r\times (n-r)} \\
  0_{(\kappa-r)\times r} & 0_{(\kappa-r)\times (n-r)}
 \end{matrix}\right)
 $$
 has rank $r$. The elements of $GL_\kappa(q)\times GL_n(q)$ that stabilize it have the form
 $$
\left(\left(\begin{matrix}
  a_{r\times r} & b_{r\times (\kappa-r)} \\
  0_{(\kappa-r)\times r} & d_{(\kappa-r)\times (\kappa-r)}
 \end{matrix}\right)
,
\left(\begin{matrix}
  a_{r\times r} & 0_{r\times (n-r)} \\
  c_{(n-r)\times r} & d'_{(n-r)\times (n-r)}
 \end{matrix}\right)^{-1}\right).
 $$
 Thus the number of matrices of rank $r$ in $M_{\kappa\times n}(\F_q)$ is
 \begin{equation*}
 \begin{aligned}
&\textrm{rank}_{\kappa,n,r}=\frac{|GL_\kappa(q)\times GL_n(q)|}{|\textrm{Stab}(M_r)|} = \frac{t_\kappa t_n}{t_r t_{\kappa-r}t_{n-r}q^{r(\kappa-r)+r(n-r)}}=\\
&q^{\binom{\kappa}{2}+\binom{n}{2}-\binom{r}{2}-\binom{\kappa-r}{2}-\binom{n-r}{2}-r(\kappa-r)-r(n-r)}\frac{s_\kappa s_n}{s_{r}s_{(\kappa-r)}s_{(n-r)}}=q^{r(r-1)/2}\frac{s_\kappa s_n}{s_{r}s_{(\kappa-r)}s_{(n-r)}}.
\end{aligned}
 \end{equation*}
 Hence
 \begin{equation*}
 \tilde{v}_{n,k}=\frac{\textrm{rank}_{\kappa,n,n-k}}{q^{\kappa n}}=q^{(n-k)(n-k-1)/2-\kappa n}\frac{s_\kappa s_n}{s_{(n-k)}s_{(\kappa-n+k)}s_{k}}.
 \end{equation*}
\end{proof}
Recall that $d_{n,k}$ is the number of $k$-dimensional subspaces in $\F_q^n$. We have
\begin{equation}
d_{n,k}=\frac{(q^n-1)(q^n-q)\cdots(q^n-q^{k-1})}{(q^k-1)(q^k-q)\cdots(q^k-q^{k-1})}=\frac{s_n}{s_{(n-k)}s_k}.
\end{equation}
To see this, observe $d_{n,k}$ is the number $n$-tuples $(v_1,\ldots, v_k)$ with $v_i \in \F_q^n$ such that the span of $v_1,\ldots, v_k$ is $k$-dimensional divided by $|GL_k(q)|$. Thus
\begin{cor} If $n\geq 0,$ $n\geq k\geq 0,$ and $\kappa\geq n-k$ then
\[
v_{n,k}=\frac{\tilde{v}_{n,k}}{d_{n,k}}=q^{(n-k)(n-k-1)/2-\kappa n}\frac{s_\kappa}{s_{(\kappa-n+k)}}.
\]
\end{cor}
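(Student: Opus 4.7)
The plan is to obtain this corollary by a direct algebraic manipulation from the results already established just above it. Specifically, the Lemma preceding Theorem~\ref{appro} gives the identity
\[
\tilde v_{n,k} = d_{n,k}\, v_{n,k},
\]
which comes from $\Aut_{\F_q}(V_n)$-invariance of $\mu_{\kappa,n}$ together with the fact that there are exactly $d_{n,k}$ subspaces of $V_n$ of dimension $k$. Hence the whole task reduces to dividing the closed-form expression for $\tilde v_{n,k}$ given in the preceding Corollary~\ref{computation} by the closed-form expression for $d_{n,k}$ recalled immediately above the statement.

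First I would record the two inputs:
\[
\tilde v_{n,k} = q^{(n-k)(n-k-1)/2-\kappa n}\,\frac{s_\kappa s_n}{s_{n-k}\,s_{\kappa-n+k}\,s_k},
\qquad
d_{n,k} = \frac{s_n}{s_{n-k}\,s_k},
\]
both valid under the stated hypotheses $n\ge k\ge 0$ and $\kappa\ge n-k$ (the latter being needed so that $s_{\kappa-n+k}$ makes sense and so that matrices of rank $n-k$ actually exist in $M_{\kappa\times n}(\F_q)$). Then I would compute the quotient, observing that the factors $s_n$, $s_{n-k}$, $s_k$ all cancel, leaving
\[
v_{n,k} = \frac{\tilde v_{n,k}}{d_{n,k}}
= q^{(n-k)(n-k-1)/2-\kappa n}\,\frac{s_\kappa}{s_{\kappa-n+k}},
\]
which is exactly the claimed formula.

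There is no real obstacle here: the two ingredients (Corollary~\ref{computation} and the expression for $d_{n,k}$) have already been proved, and what remains is only the cancellation. If anything, the only point that deserves a brief sanity check is that the hypothesis $\kappa\ge n-k$ ensures $s_{\kappa-n+k}$ has a nonzero index and that $d_{n,k}\ne 0$, so the division is legitimate.
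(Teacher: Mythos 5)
Your proof is correct and matches the paper's (implicit) derivation exactly: combine the lemma $\tilde v_{n,k}=d_{n,k}v_{n,k}$ with the closed forms for $\tilde v_{n,k}$ (Corollary~\ref{computation}) and $d_{n,k}$, and cancel the common factors $s_n$, $s_{n-k}$, $s_k$.
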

%Now it is easy to check that
%\[
%v_{n,k}=v_{n+1,k}+q^{n-k}v_{n+1,k+1}.
%\]

\appendix
\section{An alternative proof of Theorem \ref{thm:key2} for the case $G=\Z$}

We present here an alternative proof of Theorem \ref{thm:key2} for the case $G=\Z$. It is then possible (and that was the case in the earlier version of the paper) to deduce the general case of Theorem \ref{thm:key2}, and thus to obtain another proof of Theorems \ref{thm:hcA}, \ref{thm:cAn}.
Note that since $\Hom(\Z,\hcA)=\hcA$, and so we will characterize all $\Aut(\hcA)$-invariant measures on $\hcA$.

Denote by $\nu_r$ the Haar measure on $\hat{\cA_r}=(\Z[1/r]/\Z)^\N$, $r\geq 1$. Denote by $\nu_\infty$ the Haar measure on $\hat{\cA}$. Since $\hat{\cA_r}$ is a subset of $\hcA$, we will also consider $\nu_r$ as measures on $\hcA$. 
\begin{thm}\label{T0}
 Any $\Aut(\hcA)$-invariant ergodic measure on $\hat{\cA}=\T^\N$ is equal to $\nu_r$ for some $r\in[1,\infty]$.
 \end{thm}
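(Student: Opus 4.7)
The plan is to specialize the argument used in the proof of Theorem \ref{thm:key2} to the case where the source group is $\Z$, in which case $\Hom(\Z,\hcA)$ collapses onto $\hcA$ itself and the proof reduces to analyzing $\Aut(\hcA)$-invariant measures directly on $\T^\N$. First, I would identify $\hcA$ with $\T^\N$ and view $\mu$ as the joint law of the coordinate random variables $X_1, X_2,\ldots \in \T$. Since each coordinate permutation $\rho_\sigma(x_1,x_2,\ldots)=(x_{\sigma(1)},x_{\sigma(2)},\ldots)$ belongs to $\Aut(\hcA)$, the sequence $(X_n)_{n\in\N}$ is exchangeable under $\mu$, so by the de Finetti--Hewitt--Savage theorem there is a unique Borel probability measure $\omega$ on $\cP(\T)$ with
$$\mu = \int_{\cP(\T)} \lambda^{\N}\,d\omega(\lambda).$$

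The next step is to invoke the two additional automorphisms $\phi(x_1,x_2,\ldots)=(-x_1,x_2,\ldots)$ and $\psi(x_1,x_2,\ldots)=(x_1+x_2,x_2,x_3,\ldots)$, both in $\Aut(\hcA)$, exactly as in the proof of Theorem \ref{thm:key2}. Invariance of $\mu$ under $\phi$ together with the uniqueness of the de Finetti decomposition forces $(-1)_*\lambda=\lambda$ for $\omega$-a.e.\ $\lambda$. Invariance under $\psi$, analyzed through the conditional law of $X_1$ given the tail $\sigma$-algebra generated by $(X_2,X_3,\ldots)$, yields $(A_z)_*\lambda=\lambda$ for $\lambda$-a.e.\ $z\in\T$, where $A_z(t)=t+z$. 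A density-and-continuity argument then upgrades this to: the support $S_\lambda$ of $\lambda$ is a closed subgroup of $\T$ and $\lambda$ is the normalized Haar measure on $S_\lambda$.

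Finally, I would invoke the well-known classification of closed subgroups of $\T$: every such subgroup is either $\T$ itself or the finite cyclic group $\Z[1/r]/\Z$ for some $r\in\N$. Hence for $\omega$-a.e.\ $\lambda$ the product measure $\lambda^\N$ coincides with the Haar measure $\nu_r$ on $\widehat{\cA_r}$ for some $r\in[1,\infty]$. This expresses $\mu$ as a convex combination of the countable $\Aut(\hcA)$-invariant family $\{\nu_r:r\in[1,\infty]\}$, and the hypothesis that $\mu$ is indecomposable (the paper's convention for ergodicity) forces the mixture to collapse to a single $\nu_r$, giving the theorem.

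The main obstacle is executing the second step carefully: one must argue, using only $\phi$- and $\psi$-invariance together with the conditional independence structure inherited from the de Finetti representation, that each $\lambda$ in the support of $\omega$ is Haar on some closed subgroup of $\T$. The remaining steps -- establishing exchangeability, applying de Finetti--Hewitt--Savage, listing the closed subgroups of $\T$, and collapsing the final mixture via indecomposability -- are routine once that key reduction is in place.
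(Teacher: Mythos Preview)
Your argument is correct: it is precisely the specialization of the proof of Theorem~\ref{thm:key2} to $G=\Z$, and every step goes through without difficulty (the only extra ingredient beyond Theorem~\ref{thm:key2} is the elementary classification of closed subgroups of $\T$). However, the paper's appendix is expressly written to give an \emph{alternative} proof that avoids the de Finetti--Hewitt--Savage theorem. Instead of using exchangeability, the paper projects $\mu$ onto the first two coordinates, obtaining an $SL(2,\Z)$-invariant measure on $\T^2$; it then classifies such measures directly (Theorem~\ref{T-1}: the only nonatomic one is Lebesgue, and the atomic ergodic ones are uniform on the $SL(2,\Z)$-orbits of rational points), rewrites the atomic ones as signed combinations of the Haar measures $\nu_r^{(2)}$ via M\"obius inversion, and finally lifts the decomposition from $\T^2$ back to $\T^\N$ using that the Fourier--Stieltjes transform of an $\Aut(\hcA)$-invariant measure is constant on $\Aut(\cA)$-orbits in $\oplus_\N\Z$, each of which meets $\Z^2$.

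What each approach buys: your de Finetti argument is shorter, conceptually uniform with the rest of the paper, and immediately generalizes to arbitrary countable $G$ (this is Theorem~\ref{thm:key2}). The appendix route is longer and specific to $G=\Z$, but it is self-contained within classical harmonic analysis and makes explicit contact with the well-known $SL(2,\Z)$-rigidity on the $2$-torus; it is offered precisely because it dispenses with the de Finetti machinery. One small remark on your final step: the theorem is stated for \emph{ergodic} measures, and the paper warns that for non-locally-compact acting groups ergodicity and indecomposability need not coincide. Your argument actually classifies the indecomposable invariant measures; since ergodic always implies indecomposable, the conclusion for ergodic measures follows, so there is no gap.
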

We will prove this by the end of the appendix. The proof avoids the de Finetti-Hewitt-Savage Theorem, using instead tools from abstract harmonic analysis.  

 By  $\nu_r^{(2)}$ we denote the Haar measure on $(\Z[1/r]/\Z)^2$, and by $\tau_r^{(2)}$ we denote the uniform measure on the set of pairs $(x,y)\in(\Z[1/r]/\Z)^2$ such that $x,y$ generate $\Z[1/r]/\Z$.  By $\nu^{(2)}_\infty$ we denote the Haar measure on $\T^2$. Once again, since $(\Z[1/r]/\Z)^2$ is a subset of $\T^2$, we also consider $\nu_r^{(2)}$ and $\tau_r^{(2)}$ as measures on $\T^2$.

 The plan of the proof of Theorem \ref{T0} is as follows: first we will show that any $\Aut(\T^2)$-invariant ergodic measure on $\T^2$ is either a Haar measure $\nu_\infty^{(2)}$ or some $\tau_r^{(2)}$. Then we will show that any $\Aut(\hcA)$-invariant ergodic measure on $\hcA=\T^\N$ is either the Haar measure $\nu_\infty$ on $\hcA$ or the Haar measure $\nu_r$ on $\hcA_r$ for some $r\geq 1$. We are going to prove it by projecting such a measure onto the first two coordinates and using the results obtained for $\T^2$.  However, when we project ergodic measures $\nu_r$ onto $\T^2$, we obtain the measures $\nu_r^{(2)}$ which are {\it not} ergodic. Thus we will have to establish the relationship between measures $\nu_r^{(2)}$ and $\tau_r^{(2)}$.

%$\cA=\oplus_{\N}\Z$ and $\cA_r=\oplus_{\N}\left(\Z/r\Z\right)$ (Note that $\cA_1$ is the trivial group). Let $\pi_r:\cA\to\cA_r$ be the projection.

% Suppose $\nu$ is a measure on $\hat{\cA_r}$, invariant under the action of all automorphisms of $\cA_r$ and ergodic. Then $(\hat{\pi}_r)_*\nu$ is a measure on $\hat{\cA}$, invariant under all automorphisms of $\cA$. So to find all characteristic measures on $\hat{\cA_r}$ it suffices to find all such measures on $\hat{\cA}$.

 \begin{thm}\label{T-1}
The Haar measure $\nu^{(2)}_\infty$ on the torus $\T^2$ is the only nonatomic $SL(2,\Z)$-invariant ergodic probability measure.  Any atomic $SL(2,\Z)$-invariant ergodic probability measure on $\T^2$ is $\tau_r^{(2)}$ for some $r\geq 1$.
\end{thm}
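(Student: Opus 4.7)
The plan is to separate the atomic and non-atomic cases.

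\textbf{Atomic case.} If $\mu$ has an atom $p \in \T^2$, then by $SL(2,\Z)$-invariance every point of the orbit $SL(2,\Z)\cdot p$ is an atom of the same mass, so the orbit must be finite and $p$ must be a torsion point. Writing $r$ for the order of $p$ in the group $\T^2$, I have $p = (a/r, b/r)$ with $\gcd(a, b, r) = 1$. The reduction $SL(2,\Z) \twoheadrightarrow SL(2, \Z/r\Z)$ together with the standard fact that $SL(2, \Z/r\Z)$ acts transitively on the primitive vectors of $(\Z/r\Z)^2$ then shows that the $SL(2,\Z)$-orbit of $p$ coincides with the support of $\tau_r^{(2)}$. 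Since this orbit is $SL(2,\Z)$-invariant and has positive $\mu$-measure, ergodicity forces $\mu$ to be concentrated on it, and hence $\mu = \tau_r^{(2)}$.

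\textbf{Non-atomic case.} Here I would invoke Fourier analysis. Define $\hat\mu(m, n) = \int_{\T^2} e^{-2\pi i(mx + ny)}\, d\mu(x, y)$ for $(m, n) \in \Z^2$. The natural action of $SL(2,\Z)$ on the character group $\widehat{\T^2} = \Z^2$ makes $SL(2,\Z)$-invariance of $\mu$ equivalent to $SL(2,\Z)$-invariance of $\hat\mu$ on $\Z^2$. Since the $SL(2,\Z)$-orbits on $\Z^2$ are parametrized by $\gcd(m, n)$ (with the convention $\gcd(0, 0) = 0$), there exist constants $c_0 = 1$ and $c_d \in \C$ ($d \ge 1$) such that $\hat\mu(m, n) = c_{\gcd(m,n)}$.

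Next, apply Wiener's theorem: non-atomicity of $\mu$ yields
\[
\lim_{N\to\infty} \frac{1}{(2N+1)^2} \sum_{\|(m,n)\|_\infty \leq N} |\hat\mu(m, n)|^2 = 0.
\]
Grouping by $d = \gcd(m, n)$ and using the standard density of coprime pairs (Mertens), we have $N_d(N)/(2N+1)^2 \to 6/(\pi^2 d^2)$ for each fixed $d \ge 1$, where $N_d(N)$ counts pairs in $[-N, N]^2$ with $\gcd = d$. Fatou's lemma applied to the counting measure on $\N$ then gives
\[
\frac{6}{\pi^2} \sum_{d \ge 1} \frac{|c_d|^2}{d^2} \leq 0,
\]
forcing $c_d = 0$ for every $d \ge 1$. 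Hence $\hat\mu$ vanishes away from the origin, and $\mu = \nu_\infty^{(2)}$, the Haar measure.

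\textbf{Main obstacle.} The heart of the argument is the arithmetic observation that $SL(2,\Z)$-invariance collapses $\hat\mu$ to a single value per $\gcd$-class, combined with the positive density of each class. The only minor technical point is the Fatou interchange needed to extract the individual $c_d$'s, but this is routine since all summands are nonnegative and $N_d(N)/(2N+1)^2$ converges to a strictly positive limit for each fixed $d \ge 1$.
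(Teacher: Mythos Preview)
Your argument is correct, and in the nonatomic case it follows a genuinely different route from the paper.  The paper's proof is dynamical: it fixes the set $X'$ of points all of whose $SL(2,\Z)$-images have both coordinates irrational, uses ergodicity to get $\nu(X')\in\{0,1\}$, and in the case $\nu(X')=1$ disintegrates $\nu$ over each coordinate and exploits the unipotents $N_1,N_2$ together with unique ergodicity of irrational circle rotations to force the fiber measures (and then $\nu$ itself) to be Haar; the case $\nu(X')=0$ is ruled out by an infinite-index argument.  Your approach instead uses harmonic analysis: the observation that $SL(2,\Z)$-invariance makes $\hat\mu$ constant on $\gcd$-classes, combined with Wiener's lemma and the positive density of each class, kills all nontrivial Fourier coefficients at once.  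A pleasant by-product is that your nonatomic argument never invokes ergodicity, so you actually prove the slightly stronger statement that \emph{any} nonatomic $SL(2,\Z)$-invariant probability measure on $\T^2$ is Haar.  The trade-off is that the paper's argument exhibits the unipotent mechanism behind Ratner-type rigidity and would adapt to situations without such a transparent Fourier picture, whereas your argument is shorter and self-contained but leans on the specific arithmetic of $\Z^2$.

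For the atomic case your treatment is essentially the same as the paper's; the one step you leave implicit---that a finite $SL(2,\Z)$-orbit forces $p$ to be torsion---is immediate from applying the unipotents $N_1$ and $N_2$, exactly as the paper does.
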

\begin{remark}
This theorem is very well-known but we were unable to find an elementary proof of it in the literature, which is why we provide such a proof below. For example, Theorem \ref{T-1} follows from Ratner's theorems, by inducing the action of $SL(2,\Z)$ on $\T^2$ to $SL(2,\R)$. The induced action is isomorphic to the (left) action of $SL(2,\R)$ on $(SL(2,\R)\ltimes\R^2)/(SL(2,\Z)\ltimes\Z^2)$ with an $SL(2,\R)$-invariant measure. The Ratner measure classification theorem then says that there is a closed subgroup $H$, such that $SL(2,\R)\subset H\subset SL(2,\R)\ltimes\R^2$ and the measure is the homogeneous measure on a closed orbit of $H$. There are only two such groups $H$, $H=SL(2,\R)\ltimes\R^2$ and $H=SL(2,\R)$. For the former, there is only one $H$-orbit, and it corresponds to the Haar measure $\nu^{(2)}_\infty$ on $\T^2$. For $H=SL(2,\R)$ it can be seen that the closed orbits are exactly $(SL(2,\R)\ltimes (\Z[1/r])^2)/(SL(2,\Z)\ltimes \Z^2)$, for $r\geq 1$, and they correspond to the measures $\tau^{(2)}_r$.
\end{remark}

\begin{proof}[Proof of Theorem \ref{T-1}]

Let $\nu$ be an ergodic $SL(2,\Z)$-invariant Borel probability measure on $\T^2$. Let $X$ be the set of all $(x,y) \in \T^2$ such that both $x$ and $y$ are irrational. Let $X' = \cap_{A \in SL(2,\Z) } AX$. Then $X'$ is $SL(2,\Z)$-invariant and therefore, $\nu(X') \in \{0,1\}$. Suppose for the moment that $\nu(X')=1$. Let 
\begin{displaymath}
N_1= \left[ \begin{array}{cc}
1 & 1 \\
0 & 1 
\end{array}\right].\end{displaymath}
We observe that for any $(x,y) \in X'$, $N_1(x,y) = (x+y,y)$. Denote by $q,\,p: \T^2 \to \T$ the maps $q(x,y)=x$ and $p(x,y)=y$. Then we have the decompositions
\begin{equation}\label{p3}
\nu=\int\nu_y~d\,p_*\nu(y)\;\text{ and }\;\nu=\int\nu^x~d\,q_*\nu(x),
\end{equation}
 where $\nu_y$ is the fiber measure on $\T\times\{y\}$ and $\nu^x$ is the fiber measure on $\{x\}\times\T$. Since $(N_1)_*\nu=\nu$, the measures $\nu_y$ must be invariant under addition by $y$. Because $\nu$ almost every point is doubly-irrational we have that $p_*\nu(\Q)=0$. Since irrational rotations of the circle are uniquely ergodic, this implies that $\nu_y$ is the Haar measure on $\T\times\{y\}$ (for $p_*\nu$-a.e. $y$). Similarly, using $N_2(x,y)=(x,x+y)$, we obtain that $\nu^x$ is the Haar measure on $\{x\}\times\T$ (for $q_*\nu$-a.e. $x$).
 
  Now we have, by pushing forward by $p$ the second formula in \eqref{p3}
\begin{equation}\label{p4}
p_*\nu = \int p_*\nu^x~d\,q_*\nu(x).
\end{equation}
Note that $p:\{x\}\times\T\to\T$ is a bijection. Since $\nu^x$ is the Haar measure on $\{x\}\times\T$ (for $q_*\nu$-a.e. $x$) it follows that $p_*\nu^x$ is the Haar measure on $\T$ (for $q_*\nu$-a.e. $x$). Therefore we have that $p_*\nu$ is Haar measure on $\T$. This implies, by the first formula in \eqref{p3}, that $\nu$ is the Haar measure.

So we may now assume that $\nu(X')=0$. We may also assume, without loss of generality, that $\nu( (\Q/\Z)^2) ) =0$ since otherwise $\nu$ is supported on a finite orbit. 

So for $\nu$-almost every $(x,y)$ there exists $A \in SL(2,\Z)$ such that exactly one coordinate of $A(x,y)$ is rational. 
Let 
\begin{displaymath}
B= \left[ \begin{array}{cc}
0 & 1 \\
-1 & 0 
\end{array}\right].\end{displaymath}
Then either $A(x,y)$ or $BA(x,y)$ is contained in $\T \times (\Q/\Z)$. So if we let $Y$ be the set of all $(x,y) \in \T^2$ such that $x$ is irrational and $y$ is rational then $\nu$-almost every orbit intersects $Y$ nontrivially. 

Let $A \in SL(2,\Z)$. If $AY \cap Y \ne \emptyset$ then $A$ is in the subgroup $N$ generated by $N_1$ and $-I$. Moreover $NY=Y$. So if $T \subset SL(2,\Z)$ is a set of coset representatives for $N$ in $SL(2,\Z)$ then the sets $\{gY:~g\in T\}$ are pairwise disjoint and their union has full $\nu$-measure. So $1 = \sum_{g\in T} \nu(gY)$. Since $\nu$ is measure-preserving $\nu(gY)=\nu(Y)$. So $1 = |T| \nu(Y)$. However $T$ is infinite. This contradiction proves the first part of the statement of the theorem.

Suppose now that $\nu$ is an $SL(2,\Z)$-invariant ergodic probability measure on $\T^2$, and $\nu(\{(x,y)\})>0$. Then the $SL(2,\Z)$-orbit of $(x,y)$ must be finite and since $\nu$ is ergodic, $\nu$ is the uniform measure on this orbit. 

In particular, elements $(x+ny,y)$ and $(x,mx+y)$ belong to this orbit for all $n,m\in\Z$. Since this orbit is finite, it follows that $(n-n')y\in\Z$ for some $n\neq n'$, and so $y$ is rational. Similarly, $x$ is rational.

Let $r\geq 1$ be such that $x,y$ generate $\Z[1/r]/\Z$. Then $(x,y)\in (\Z[1/r]/\Z)^2$, and the $SL(2,\Z)$ orbit of $(x,y)$ is the set of all $(z,t)\in (\Z[1/r]/\Z)^2$ such that $z,t$ generate $\Z[1/r]/\Z$.
\end{proof}

Using Choquet theorem what we have just proven can be reformulated as follows: if $\nu$ is an $\Aut(\T^2)$-invariant probability measure then it is the baricenter of some probability measure on the set $\{\nu^{(2)}_\infty\}\cup \{\tau^{(2)}_r\}_{r\geq1}$.  In other words, there are nonnegative numbers $b_r$, $r\in[1,\infty]$ such that $\sum_{r\in[1,\infty]} b_r=1$ and $\nu=b_\infty \nu^{(2)}_\infty+\sum_{r\in [1,\infty)}b_r\tau^{(2)}_r$. However, we will need to replace $\tau^{(2)}_r$ by $\nu^{(2)}_r$. This is the content on the next three lemmas.

We start by expressing $\tau_r^{(2)}$ as a linear combination of $\nu_r^{(2)}$. Recall that the Mobius function $\phi:\N\to\R$ is defined by $\phi(n)=0$ if $p^2|n$ for some prime $p$, and $\phi(p_1\cdots p_t)=(-1)^t$ if $p_1,\dots,p_t$ are different prime numbers. We will use that $\phi$ is {\it multiplicative}, that is if $r_1,r_2$ are mutually prime, then $\phi(r_1r_2)=\phi(r_1)\phi(r_2)$. We will also use the {\it Mobius inversion formula} which can be stated as follows: if we have two sequences $c_r$ and $d_r$, and $c_r=\sum_{s|r}d_s$, then $d_r=\sum_{s|r}\phi(r/s)c_s$.

 \begin{lem}\label{LA0}
Let $\phi$ denote the Mobius function. Then
\begin{equation} 
\tau_r^{(2)}=\sum_{k|r}\alpha(k,r)\nu_k^{(2)},
\end{equation}
where
\begin{equation}
\alpha{(k,r)}=\frac{\phi\left(r/k\right)k^2}{\sum_{s|r}\phi\left(r/s\right)s^2}=\frac{\phi\left(r/k\right)k^2}{\beta(r)}
\end{equation}
\end{lem}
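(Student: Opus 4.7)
The plan is to convert the claimed identity into an elementary combinatorial assertion about the finite set $(\Z[1/r]/\Z)^2$ and invert a triangular linear system over the divisor lattice of $r$. Since both $\nu_k^{(2)}$ (for $k \mid r$) and $\tau_r^{(2)}$ are uniform probability measures supported on finite subsets of $(\Z[1/r]/\Z)^2$, it is enough to compare weights on points.

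First I would establish the easy ``forward'' direction. For each $s \mid k$, let $\beta(s)$ denote the number of pairs $(x,y) \in (\Z[1/s]/\Z)^2$ that generate $\Z[1/s]/\Z$; by definition $\tau_s^{(2)}$ gives each such pair weight $1/\beta(s)$. Every pair in $(\Z[1/k]/\Z)^2$ generates $\Z[1/s]/\Z$ for a unique $s \mid k$, so partitioning the $k^2$ points yields the identity
\begin{equation*}
k^2 = \sum_{s\mid k} \beta(s),
\end{equation*}
and, as probability measures,
\begin{equation*}
\nu_k^{(2)} \;=\; \sum_{s\mid k} \frac{\beta(s)}{k^2}\,\tau_s^{(2)}.
\end{equation*}
Möbius inversion applied to $k^2 = \sum_{s\mid k}\beta(s)$ gives $\beta(r) = \sum_{s\mid r} \phi(r/s)\, s^2$, which matches the denominator in the statement.

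Next I would invert the displayed relation above to express $\tau_r^{(2)}$ in terms of the $\nu_k^{(2)}$. Rather than invoking an abstract incidence algebra, I would simply substitute the proposed formula and compute: plugging the claimed expression for $\tau_r^{(2)}$ together with the expansion of each $\nu_k^{(2)}$ yields
\begin{equation*}
\sum_{k\mid r}\frac{\phi(r/k)\,k^2}{\beta(r)}\,\nu_k^{(2)}
\;=\; \frac{1}{\beta(r)}\sum_{s\mid r}\beta(s)\,\tau_s^{(2)} \sum_{s\mid k\mid r}\phi(r/k).
\end{equation*}
The inner sum, after substituting $k = sj$ with $j \mid r/s$, becomes $\sum_{j\mid r/s}\phi((r/s)/j)$, which by the defining identity of the Möbius function equals $1$ if $s=r$ and $0$ otherwise. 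Only the $s=r$ term survives, and it reduces to $\tau_r^{(2)}$, confirming the formula.

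The main obstacle is just bookkeeping: making sure that the decomposition of $\nu_k^{(2)}$ really is as claimed (every pair in $(\Z[1/k]/\Z)^2$ generates a unique cyclic subgroup $\Z[1/s]/\Z$ with $s\mid k$, and the group of such pairs acts transitively under the group of order automorphisms so the $\tau_s^{(2)}$ weight is distributed evenly) and that the double-sum manipulation in the Möbius step is carried out with the correct normalization $\beta(r)$. Once these two ingredients are in place, the identity is immediate.
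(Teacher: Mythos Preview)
Your proposal is correct and follows essentially the same route as the paper: both arguments partition $(\Z[1/r]/\Z)^2$ according to the subgroup generated, obtain $r^2\nu_r^{(2)}=\sum_{k\mid r}\beta(k)\tau_k^{(2)}$ and $\beta(r)=\sum_{s\mid r}\phi(r/s)s^2$, and then invert; the paper simply invokes M\"obius inversion on the measure-valued relation directly, whereas you verify the inverted formula by substitution, which amounts to the same thing.
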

\begin{proof}
Let $\beta(r)$ be the number of elements in the set 
\[
S_r=\{(x,y)\in(\Z[1/r]/\Z)^2:\,x\text{ and }y\text{ generate }\Z[1/r]/\Z\}.
\]
Then $(\Z[1/r]/\Z)^2$ is the disjoint union of $S_k$, $k|r$. Thus $r^2=\sum_{k|r}\beta(k)$ and so, by the Mobius inversion formula  $\beta(r)=\sum_{k|r}\phi(r/k)k^2$.
We have that $\nu_r^{(2)}$ is the uniform measure on $(\Z[1/r]/\Z)^2$, and $\tau_k^{(2)}$ is the uniform measure on $S_k$. Thus
\[
r^2\nu_r^{(2)}=\sum_{k|r}\beta(k) \tau_k^{(2)}.
\] 
Again by the Mobius inversion formula 
\[
\beta(r) \tau_r^{(2)}=\sum_{k|r}\phi(r/k)k^2\nu_k^{(2)}.
\]
\end{proof}
\begin{lem}\label{LA2a}
$\beta(r)\geq cr^2$ for some $c>0$.
\end{lem}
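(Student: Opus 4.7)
The plan is to exploit the multiplicative structure of $\beta$ and reduce to a standard Euler product computation. Recall $\beta(r) = \sum_{k \mid r} \phi(r/k) k^2$, which is the Dirichlet convolution of the Möbius function $\phi$ with the multiplicative function $k \mapsto k^2$. Since both are multiplicative, $\beta$ is multiplicative as well. So if we write $r = \prod_p p^{a_p}$ as a product of prime powers, then
\begin{equation*}
\beta(r) = \prod_{p \mid r} \beta(p^{a_p}).
\end{equation*}

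The first step is to evaluate $\beta$ on prime powers. For $r = p^a$, the only divisors $k$ of $p^a$ for which $\phi(p^a/k) \neq 0$ are $k = p^a$ (giving $\phi(1) = 1$) and $k = p^{a-1}$ (giving $\phi(p) = -1$), since higher powers of $p$ yield $\phi = 0$. Therefore
\begin{equation*}
\beta(p^a) = p^{2a} - p^{2(a-1)} = p^{2a}\left(1 - \tfrac{1}{p^2}\right).
\end{equation*}

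Multiplying these contributions over all primes dividing $r$, we obtain
\begin{equation*}
\beta(r) = r^2 \prod_{p \mid r} \left(1 - \tfrac{1}{p^2}\right) \; \geq \; r^2 \prod_{p \text{ prime}} \left(1 - \tfrac{1}{p^2}\right).
\end{equation*}
The infinite product on the right is the reciprocal of $\zeta(2) = \pi^2/6$, hence strictly positive. So we may take $c = 6/\pi^2$, completing the proof.

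There is no real obstacle here — once one recognizes that $\beta$ is multiplicative, the computation on prime powers is immediate and the rest is a standard Euler product estimate. The only small point to verify carefully is the vanishing of $\phi(p^a/k)$ for $k \leq p^{a-2}$, which is immediate from the definition of $\phi$.
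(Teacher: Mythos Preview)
Your proof is correct and follows essentially the same route as the paper's: multiplicativity of $\beta$, the computation $\beta(p^a)=p^{2a}(1-1/p^2)$, and the bound via the Euler product $\prod_p(1-1/p^2)$. You add the explicit identification $c=6/\pi^2=1/\zeta(2)$ and a justification of multiplicativity via Dirichlet convolution, but these are just elaborations on the paper's argument rather than a different approach.
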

\begin{proof}
It is easy to see that $\beta(r)$ is a multiplicative function of $r$. Also, $\beta(p^n)=p^{2n}-p^{2(n-1)}$. Let $p_1,\dots,p_t$ be all prime divisors of $r$. Then
\[
\frac{\beta(r)}{r^2}=\left(1-\frac{1}{p_1^2}\right)\cdots\left(1-\frac{1}{p_t^2}\right).
\] 
Thus
\[
\frac{\beta(r)}{r^2}\geq\prod_{p-\text{prime}}\left(1-\frac{1}{p^2}\right)=c>0
\]
\end{proof}
\begin{lem}\label{LA2b}
Suppose $\mu$ is an $\Aut(\T^2)$-invariant probability measure on $\T^2$. Then there exist a sequence $\{a_r\}_{r=1}^{r=\infty}$ such that $\sum_r|a_r|<\infty$ and 
\[
\sum_{r\in[1,\infty]}a_r \nu_r^{(2)}=\mu.
\]
\end{lem}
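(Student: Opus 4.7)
The plan is to combine Choquet's theorem on the compact convex set of $\Aut(\T^2)$-invariant Borel probability measures on $\T^2$ with Theorem \ref{T-1}, then substitute via Lemma \ref{LA0} and reorganize the resulting double sum. By Theorem \ref{T-1} the extreme points of this simplex are exactly $\{\nu_\infty^{(2)}\}\cup\{\tau_r^{(2)}\}_{r\ge 1}$, so I would first write
$$
\mu = b_\infty\,\nu_\infty^{(2)} + \sum_{r=1}^{\infty} b_r\,\tau_r^{(2)},
$$
with $b_r\ge 0$ and $\sum_r b_r=1$. Applying Lemma \ref{LA0} and reindexing $r=km$ with $m\ge 1$ gives
$$
\mu = b_\infty\,\nu_\infty^{(2)} + \sum_{k=1}^{\infty}\sum_{m=1}^{\infty} b_{km}\,\frac{\phi(m)\,k^2}{\beta(km)}\,\nu_k^{(2)}.
$$

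Next I would justify interchanging the two sums and setting
$$
a_\infty := b_\infty,\qquad a_k := \sum_{m=1}^{\infty} b_{km}\,\frac{\phi(m)\,k^2}{\beta(km)}\quad (k\ge 1).
$$
Since $|\phi(m)|\le 1$, Lemma \ref{LA2a} yields $k^2/\beta(km)\le 1/(c\,m^2)$, and therefore
$$
\sum_{k=1}^{\infty}\sum_{m=1}^{\infty} b_{km}\,\frac{|\phi(m)|\,k^2}{\beta(km)} \;\le\; \frac{1}{c}\sum_{m=1}^{\infty}\frac{1}{m^2}\sum_{k=1}^{\infty}b_{km} \;\le\; \frac{1}{c}\sum_{m=1}^{\infty}\frac{1}{m^2} \;=\; \frac{\pi^2}{6c},
$$
using $\sum_k b_{km}\le\sum_n b_n\le 1$. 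This absolute bound both shows $\sum_r |a_r|<\infty$ and, by Fubini applied to the signed-measure double series (or equivalently, applied to the scalar series $\sum_{k,m} b_{km}\alpha(k,km)\nu_k^{(2)}(A)$ for every Borel $A\subset\T^2$), licenses the swap of summations.

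Combining these two steps gives
$$
\mu = a_\infty\,\nu_\infty^{(2)} + \sum_{k=1}^{\infty} a_k\,\nu_k^{(2)},
$$
which is the desired identity. The main subtlety in the argument is verifying the absolute convergence required for the interchange of sums: the ratio $k^2/\beta(km)$ would not be controllable uniformly in $k$ without the quadratic lower bound of Lemma \ref{LA2a}, and even with it one needs the $1/m^2$ factor to produce a summable majorant after telescoping $\sum_k b_{km}\le 1$. The remaining steps (Choquet, Lemma \ref{LA0}, and the final assembly) are formal once this Fubini step is in hand.
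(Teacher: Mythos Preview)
Your proof is correct and follows essentially the same approach as the paper: decompose $\mu$ via Choquet and Theorem~\ref{T-1} into the $\tau_r^{(2)}$, substitute Lemma~\ref{LA0}, and use the bound $k^2/\beta(km)\le 1/(cm^2)$ from Lemma~\ref{LA2a} together with $\sum_k b_{km}\le 1$ to obtain absolute summability and justify the rearrangement. The only cosmetic differences are notation ($m$ versus the paper's $q$) and your explicit invocation of Fubini, which the paper leaves implicit.
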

\begin{proof}
By theorem \ref{T-1} we have that $\{\nu_\infty^{(2)}\}\cup\{\tau_r^{(2)}\}_{r\geq 1}$ are the extreme points of the set of all $\Aut(\T^2)$-invariant measures on $\T^2$. Thus by Choquet theorem there is a probability measure on $\{\nu_\infty^{(2)}\}\cup\{\tau_r^{(2)}\}_{r\geq 1}$ such that $\mu$ is equal to its baricenter. That is, there is a sequence of numbers $\{b_r\}_{r=1}^{r=\infty}$ such that $b_r\geq 0$ and 
\[
\mu(U)=b_\infty \nu^{(2)}_\infty+\sum_{r\in [1,\infty)}b_r\tau_r^{(2)}(U)\text{ for all measurable }U\subset \T^2.
\]
By lemma \ref{LA0} we have that 
 \[
 \mu-b_\infty \nu^{(2)}_\infty=\sum_{r\in[1,\infty)} b_r \left(\sum_{k|r}\alpha(k,r)\nu_k^{(2)}\right)=\sum_{k\in [1,\infty)}\sum_{q\geq 1}b_{kq}\alpha(k,kq)\nu_k^{(2)}.
 \]
 We have that
 \[
 \left|b_{kq}\alpha(k,kq)\right|=b_{kq}\frac{k^2}{\beta(kq)}\leq c^{-1}\frac{b_{kq}}{q^2}.
 \]
 Thus we have (using lemma \ref{LA2a}) 
 \[
 \sum_{k,q}\left|b_{kq}\alpha(k,kq)\right|\leq c^{-1}\sum_{k,q}\frac{b_{kq}}{q^2}\leq c^{-1}\sum_{q}\frac{1}{q^2}<\infty.
 \]
So to finish the proof it suffices to take $a_\infty=b_\infty$ and $a_r=\sum_{q\geq 1}b_{rq}\alpha(r,rq)$.
\end{proof}

\begin{lem}\label{LA1}
Let $m_i$, $i\geq 1$ be probability measures on a measurable space $X$. Suppose that $\sum_{i\geq 1}|a_i|<\infty$. Define $m$, a (signed) measure on $X$, by the rule
\[
m(U)=\sum_{i\geq 1}a_im_i(U),
\]
for any measurable set $U\subset X$. Then $\sum_{i=1}^Na_im_i$ converges  to $m$, as $N\to\infty$, in norm, and therefore $weak^*$. Conversely, if $\sum_{i=1}^Na_im_i$ $weak^*$ converges to $m'$, then $m=m'$.
\end{lem}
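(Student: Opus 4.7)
The plan is to work entirely with the total variation norm on signed measures and deduce weak* statements as automatic consequences. First I would verify that $m$ is a well-defined signed measure. Since each $m_i$ is a probability measure, the pointwise bound $|a_i m_i(U)| \le |a_i|$ holds for every measurable $U$, so the defining series $\sum_i a_i m_i(U)$ converges absolutely. Countable additivity of $m$ reduces to interchanging two sums: if $\{U_k\}$ is a pairwise disjoint family with union $U$, then
$$\sum_{i\ge 1} a_i m_i(U) = \sum_{i\ge 1}\sum_{k\ge 1} a_i m_i(U_k) = \sum_{k\ge 1}\sum_{i\ge 1} a_i m_i(U_k) = \sum_{k\ge 1} m(U_k),$$
where the interchange is justified by the absolute convergence of the double series (dominated by $\sum_i |a_i| < \infty$).

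Next I would establish norm convergence. Let $s_N = \sum_{i=1}^{N} a_i m_i$. By subadditivity of total variation,
$$\|m - s_N\|_{TV} = \left\|\sum_{i > N} a_i m_i\right\|_{TV} \le \sum_{i>N} |a_i|\cdot \|m_i\|_{TV} = \sum_{i>N} |a_i|,$$
and this tail tends to $0$ by the summability hypothesis. (The first equality uses the fact, already verified above, that one may write $m - s_N$ as a signed measure whose value on any measurable set is $\sum_{i>N} a_i m_i(U)$.) This proves norm convergence. Weak* convergence is then immediate, since for every bounded measurable (in particular, bounded continuous) $f$,
$$\left|\int f\, dm - \int f\, ds_N\right| \le \|f\|_\infty\, \|m - s_N\|_{TV} \longrightarrow 0.$$

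For the converse, if $s_N$ weak* converges to $m'$, then combining this with the weak* convergence $s_N \to m$ just established yields $\int f\, dm = \int f\, dm'$ for every bounded continuous function $f$. Since the ambient space in our application is $\T^2$ (a compact metric space), continuous functions separate finite signed Borel measures, so $m = m'$. The argument has no substantive obstacle; the only point requiring any care is the manipulation of the ``tail'' $\sum_{i>N} a_i m_i$ as a signed measure in its own right, which is handled by the same absolute-convergence bookkeeping used to check that $m$ is a measure.
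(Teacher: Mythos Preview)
Your proposal is correct and follows the same approach the paper takes; in fact, the paper's own proof consists solely of the word ``Trivial'' together with the remark that countable additivity of $m$ must be checked using $\sum_i|a_i|<\infty$. You have simply filled in the routine details (Fubini/Tonelli for the double sum, the tail estimate for norm convergence, and uniqueness of weak$^*$ limits) that the paper leaves to the reader.
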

\begin{proof}
 Trivial. We note that one has to prove also that $m$ is indeed countably additive, but that is easy using $\sum_{i\geq 1}|a_i|<\infty$.
 \end{proof}
\begin{lem}\label{LA2}
Let $m$ and $m_i$, $i\geq 1$ be probability measures on $\hat{B}$, the Pontryagin dual of a discrete group $B$. Then $m_i$ $weak^*$ converges to $m$ iff $\hat{m}_i(b)\to \hat{m}(b)$ for all $b\in B$, where $\hat{m}$ is the Fourier-Stiltjes transform of $m$.
 \end{lem}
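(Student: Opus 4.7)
The plan is to use the fact that since $B$ is discrete, $\hat{B}$ is a compact Hausdorff abelian group, so any sequence of Borel probability measures on $\hat{B}$ is automatically norm-bounded, and weak$^*$ convergence is tested against $C(\hat{B})$. The Fourier--Stieltjes transform at $b \in B$ is by definition $\hat{m}(b) = \int_{\hat{B}} \chi(b)\, dm(\chi)$, where $\chi \mapsto \chi(b)$ is a continuous character on $\hat{B}$.

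For the forward direction, I would note that for each fixed $b \in B$ the map $\chi \mapsto \chi(b)$ is a continuous (hence bounded) function on the compact space $\hat{B}$, so weak$^*$ convergence $m_i \to m$ applied to this function gives $\hat{m}_i(b) \to \hat{m}(b)$ immediately.

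For the converse, I would first observe that by linearity the hypothesis gives $\int f\, dm_i \to \int f\, dm$ for every function $f$ that is a finite complex-linear combination of characters $\chi \mapsto \chi(b)$, $b \in B$. The key step is then to invoke the Stone--Weierstrass theorem: the linear span $\mathcal{A}$ of the characters of $\hat{B}$ is a $\ast$-subalgebra of $C(\hat{B})$ containing the constants and separating points of $\hat{B}$ (the latter is precisely Pontryagin duality, since characters on $\hat{B}$ correspond to elements of $B$). Hence $\mathcal{A}$ is uniformly dense in $C(\hat{B})$.

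Given any $f \in C(\hat{B})$ and $\varepsilon > 0$, I would choose $g \in \mathcal{A}$ with $\|f - g\|_\infty < \varepsilon$; then for all $i$, $\big|\int f\, dm_i - \int g\, dm_i\big| < \varepsilon$ and similarly for $m$, so combining with the convergence on $\mathcal{A}$ yields $\limsup_i |\int f\, dm_i - \int f\, dm| \leq 2\varepsilon$. Since $\varepsilon$ was arbitrary, $m_i \to m$ weak$^*$. I expect no serious obstacle here; the only point requiring care is that, because $B$ need not be countable, one should verify the Stone--Weierstrass hypotheses directly rather than approximating via a countable dense set, but this is immediate from the definition of the Pontryagin dual.
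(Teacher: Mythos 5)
Your proposal is correct and follows essentially the same route as the paper: the forward direction is immediate from continuity of the character $\chi\mapsto\chi(b)$, and the converse is the Stone--Weierstrass density of finite linear combinations of characters in $C(\hat B)$, followed by a standard approximation estimate. The only difference is that you spell out the verification of the Stone--Weierstrass hypotheses and the $\varepsilon$-bookkeeping, which the paper leaves implicit.
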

 \begin{proof}
 Recall that $\hat{m}(b)=\int_{\hat{B}}\chi(b)\,d\,\mu(\chi)$. That is, $b$ is considered as function on $\hat{B}$, $b(\chi)=\chi(b)$. This function is continuous, so $weak^*$ convergence $m_i\to m$ impies $\hat{m}_i(b)\to \hat{m}(b)$ for all $b\in B$.
 
 Conversely, any continuous function on the compact space $\hat{B}$ can be approximated by a finite linear combination of $b\in B$ (by Stone-Weierstrass theorem). 
 \end{proof}

 \begin{lem}\label{LA3}
 Let $\mu$ be a probability measure on $\hat{\cA}=\T^\N$, and let $\hat{\mu}:\oplus_{\N} \Z\to\C$ be the Fourier-Stiltjes transform of $\mu$. Then $\mu$ is invariant under automorphisms of $\cA$ if and only if $\hat{\mu}(k)=\hat{\mu}(\psi(k))$ for any $k\in\oplus_\N \Z$ and any automorphism $\psi$ of $\cA$. 
 \end{lem}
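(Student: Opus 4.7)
The plan is to reduce the statement to two standard facts: (i) the Pontryagin duality identification $\Aut(\cA) \cong \Aut(\hcA)$ from Proposition \ref{prop:Pontryagin}, and (ii) the injectivity of the Fourier--Stieltjes transform on Borel probability measures on $\hcA$, which follows from the Stone--Weierstrass theorem (exactly as used in the proof of Lemma \ref{LA2}).

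First, I would fix $\psi \in \Aut(\cA)$ and unwind the induced action on $\hcA$. Under the identification of Proposition \ref{prop:Pontryagin}, the dual automorphism acts on $\hcA$ by $(\psi \cdot \chi)(k) = \chi(\psi^{-1}(k))$ for $\chi \in \hcA$, $k \in \cA$; this is the unique convention making the pairing $\chi \mapsto \chi(k)$ equivariant and making the assignment $\psi \mapsto (\psi \cdot {-})$ a left action. The statement ``$\mu$ is invariant under automorphisms of $\cA$'' then means $\psi_*\mu = \mu$ for all $\psi \in \Aut(\cA)$ under this action.

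Second, I would directly compute the Fourier--Stieltjes transform of $\psi_*\mu$. For any $k \in \cA$,
\begin{equation*}
\widehat{\psi_*\mu}(k) \;=\; \int_{\hcA} \chi(k)\, d(\psi_*\mu)(\chi) \;=\; \int_{\hcA} (\psi \cdot \chi)(k)\, d\mu(\chi) \;=\; \int_{\hcA} \chi(\psi^{-1}(k))\, d\mu(\chi) \;=\; \hat{\mu}(\psi^{-1}(k)).
\end{equation*}
Hence $\psi_*\mu$ and $\mu$ have the same Fourier--Stieltjes transform if and only if $\hat{\mu}(k) = \hat{\mu}(\psi^{-1}(k))$ for every $k \in \cA$.

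Third, I would invoke injectivity of the Fourier--Stieltjes transform: the characters $\{k(\cdot) : k \in \cA\}$ form a point-separating subalgebra of $C(\hcA)$ closed under complex conjugation, so by Stone--Weierstrass their linear span is uniformly dense in $C(\hcA)$; therefore a Borel probability measure on the compact group $\hcA$ is determined by its values $\hat{\mu}(k)$ on $\cA$. Combining this with the computation above, $\psi_*\mu = \mu$ for all $\psi \in \Aut(\cA)$ if and only if $\hat{\mu}(k) = \hat{\mu}(\psi^{-1}(k))$ for all $\psi \in \Aut(\cA)$ and $k \in \cA$; since inversion is a bijection of $\Aut(\cA)$, this is the same as $\hat{\mu}(k) = \hat{\mu}(\psi(k))$ for all $\psi \in \Aut(\cA)$ and $k \in \cA$, which is the claim. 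The only delicate point is bookkeeping the direction of the dual action so that the Fourier computation comes out as $\hat{\mu} \circ \psi^{-1}$ rather than something else; everything else is formal.
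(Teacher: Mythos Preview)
Your proof is correct and follows essentially the same approach as the paper's: compute the Fourier--Stieltjes transform of the pushforward (the paper records this as the general identity $\widehat{\hat{\psi}_*\mu}=\hat{\mu}\circ\psi$ in \eqref{property2}) and then invoke injectivity of the Fourier--Stieltjes transform. The only differences are cosmetic---you use the left-action convention $\psi\cdot\chi=\chi\circ\psi^{-1}$ and then absorb the resulting $\psi^{-1}$ by quantifying over all of $\Aut(\cA)$, whereas the paper works directly with $\hat{\psi}$---and you spell out the Stone--Weierstrass justification for injectivity that the paper leaves implicit.
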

 \begin{proof}
 First note that in general, if $\psi:B_1\to B_2$ is a continuous homomorphism of locally compact groups, and $\mu$ is a finite measure on $\hat{B}_2$, then $\widehat{\hat{\psi}_*\mu}=\hat{\mu}\psi$. Indeed, we have
 \begin{equation}\label{property2}
 \widehat{\hat{\psi}_*\mu}(b_1)=\int_{\hat{B}_1}\chi(b_1)\,d\,\hat{\psi}_*\mu(\chi)=\int_{\hat{B}_2}\hat{\psi}(\zeta)(b_1)\,d\,\mu(\zeta)=\int_{\hat{B}_2}\zeta(\psi(b_1))\,d\,\mu(\zeta)=\hat{\mu}(\psi(b_1))
 \end{equation}
 
 It follows that if $\mu$ is an $\Aut(\hcA)$-invariant measure on $\hat{\cA}$ and $\psi\in \Aut(\A)$, then $\hat{\mu}=\widehat{\hat{\psi}_*\mu}=\hat{\mu}\psi$ by \eqref{property2}. Conversely, if $\hat{\mu}(k)=\hat{\mu}(\psi(k))$ for any $k\in\oplus_\N \Z$, then it follows by \eqref{property2} that $\hat{\mu}=\widehat{\hat{\psi}_*\mu}$, and thus $\mu=\hat{\psi}_*\mu$. Since this is true for any $\psi\in\Aut(\A)$, it follows that $\mu$ is $\Aut(\hcA)$-invariant.
 \end{proof}
 
 In the next Lemma and its Corollary we establish that it is possible to write $\hcA$ as a disjoint union of countable sets $\hcA'_r$ such that $\nu_r(\hcA'_r)=1$. 
 \begin{lem}\label{LA-3}
$\nu_r(\hcA_{r'})=1$ if $r|r'$ and $0$ otherwise.
\end{lem}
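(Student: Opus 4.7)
The plan is to analyze the intersection $\hcA_r \cap \hcA_{r'}$ coordinate-wise inside $\hcA = \T^\N$, using the identifications $\hcA_r = (\Z[1/r]/\Z)^\N$ and $\hcA_{r'} = (\Z[1/r']/\Z)^\N$ from Lemma \ref{lem:Ann}. The whole computation reduces to elementary facts about the subgroups $\Z[1/r]/\Z \le \T$, together with independence of the coordinates under the Haar measure $\nu_r$.

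First I would handle the easy direction. If $r \mid r'$ then $\Z[1/r] \subseteq \Z[1/r']$, hence $\Z[1/r]/\Z \le \Z[1/r']/\Z$, and passing to the countable product gives $\hcA_r \le \hcA_{r'}$. Since $\nu_r$ is the Haar measure on $\hcA_r$, it is concentrated on $\hcA_r$, so $\nu_r(\hcA_{r'}) \ge \nu_r(\hcA_r)=1$.

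For the other direction, suppose $r \nmid r'$ and set $d=\gcd(r,r')$. Then $d \mid r$ and $d < r$ (strictly, since $r \nmid r'$ excludes $d=r$). Observe that inside $\T$ one has
\begin{equation*}
  \bigl(\Z[1/r]/\Z\bigr)\cap\bigl(\Z[1/r']/\Z\bigr) = \Z[1/d]/\Z,
\end{equation*}
because an element of $\T$ whose order divides $r$ and divides $r'$ is of order dividing $d$. Taking countable products gives $\hcA_r \cap \hcA_{r'} = (\Z[1/d]/\Z)^\N = \hcA_d$. Since $\nu_r$ is supported on $\hcA_r$, it follows that $\nu_r(\hcA_{r'})=\nu_r(\hcA_d)$.

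Finally, under $\nu_r$ the coordinates $x_i$ of a point $x\in \hcA_r$ are i.i.d.\ uniform on the finite group $\Z[1/r]/\Z$ of order $r$, and its subgroup $\Z[1/d]/\Z$ has order $d<r$. Hence $\nu_r(\{x:x_i\in \Z[1/d]/\Z\}) = d/r<1$ for each $i$, and by independence,
\begin{equation*}
  \nu_r(\hcA_d)=\prod_{i\in\N} \frac{d}{r} = \lim_{n\to\infty}\Bigl(\frac{d}{r}\Bigr)^{n}=0.
\end{equation*}
Combining, $\nu_r(\hcA_{r'})=0$, which finishes the proof. There is no real obstacle here — the only subtle point is the identification of $\Z[1/r]/\Z \cap \Z[1/r']/\Z$ with $\Z[1/\gcd(r,r')]/\Z$, and the use of independence of coordinates to convert a strict inequality on a single coordinate into an honest zero measure for the full product.
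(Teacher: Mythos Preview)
Your proof is correct and follows essentially the same route as the paper: both reduce to $\nu_r(\hcA_{r'})=\nu_r(\hcA_d)$ with $d=\gcd(r,r')<r$, and then argue this is zero. The only difference is cosmetic---the paper observes that $\hcA_d$ has infinite index in $\hcA_r$ and so must be Haar-null, whereas you compute the product $\prod_i d/r=0$ directly via independence of coordinates; these are two phrasings of the same fact.
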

 \begin{proof}
 Note first that if $r|r'$ then $\hcA_{r}\subset\hcA_{r'}$. Thus $\nu_r(\hcA_{r'})\geq \nu_r(\hcA_{r})=1$.
 
 Suppose now that $r\not{|}r'$. Let $d=gcd(r,r')$. It follows that $d<r$. Since $\nu_r$ is supported on $\hcA_r$ we have that 
 \[
 \nu_r(\hcA_{r'})=\nu_r(\hcA_{r'}\cap \hcA_{r})=\nu_r(\hcA_{d}).
 \]
 It is left to notice that since $d<r$, the subgroup $\hcA_{d}$ has infinite index in $\hcA_{r}$. Since $\nu_r$ is a finite measure, it follows that $\nu_r(\hcA_{d})$ must be $0$.
 \end{proof}
 \begin{cor}\label{C-3}
 Denote by $\hcA'_s=\hcA_s-\cup_{k<s}\hcA_k$. Denote also $\hcA'_\infty=\hcA-\cup_k \hcA_k$. Then $\nu_r(\hcA'_s)=1$ if $r=s$ and $0$ otherwise.
 \end{cor}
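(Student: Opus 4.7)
The plan is to reduce Corollary \ref{C-3} immediately to Lemma \ref{LA-3} by splitting into cases according to whether $r$ and $s$ are finite or $\infty$. In every case, the argument is just a matter of writing $\hcA'_s$ as a difference of a subgroup and a countable union of subgroups and then using countable additivity together with the measures of the $\hcA_k$'s computed in Lemma \ref{LA-3}.

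First I would handle the diagonal case $r=s$. For $r=s$ finite, $\nu_r(\hcA_r)=1$ by Lemma \ref{LA-3}, while for every $k<r$ we have $r\nmid k$, so $\nu_r(\hcA_k)=0$. Hence
\begin{equation*}
\nu_r(\hcA'_r)\ge \nu_r(\hcA_r)-\sum_{k<r}\nu_r(\hcA_k)=1.
\end{equation*}
For $r=s=\infty$, I would observe that $\nu_\infty$ is the Haar measure on $\T^\N$, and for each finite $k$ the projection of $\hcA_k=(\Z[1/k]/\Z)^\N$ onto the first coordinate is a finite subset of $\T$; hence $\nu_\infty(\hcA_k)=0$, and the countable union $\bigcup_k \hcA_k$ still has $\nu_\infty$-measure zero, giving $\nu_\infty(\hcA'_\infty)=1$.

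Now the off-diagonal cases. If $r,s$ are both finite with $s<r$, then $r\nmid s$, so by Lemma \ref{LA-3} $\nu_r(\hcA_s)=0$; since $\hcA'_s\subset\hcA_s$, this gives $\nu_r(\hcA'_s)=0$. If $r,s$ are both finite with $r<s$, then $r$ appears in the union $\bigcup_{k<s}\hcA_k$, so $\hcA_r\cap\hcA'_s=\emptyset$; since $\nu_r$ is concentrated on $\hcA_r$ (Lemma \ref{LA-3}), we conclude $\nu_r(\hcA'_s)=0$. If $r$ is finite and $s=\infty$, then $\hcA_r\subset\bigcup_k\hcA_k$, so $\hcA_r\cap\hcA'_\infty=\emptyset$, and again $\nu_r(\hcA'_\infty)=0$. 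Finally, if $r=\infty$ and $s$ is finite, then $\hcA'_s\subset\hcA_s$ which already has $\nu_\infty$-measure zero by the remark used in the diagonal case $r=s=\infty$.

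There is no real obstacle here; the only small point of care is the case $r=\infty$, where one must know that $\nu_\infty(\hcA_k)=0$ for every finite $k$. This is immediate because $\hcA_k\subset\{x\in\T^\N : x_1\in\Z[1/k]/\Z\}$ and $\Z[1/k]/\Z$ is a finite subset of $\T$, hence of $\nu_\infty$-measure zero in the first coordinate. With this observation in hand, the rest of the corollary is a routine case check combining Lemma \ref{LA-3} with countable additivity.
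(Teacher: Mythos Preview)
Your proof is correct and is precisely the routine case analysis the paper leaves implicit; the paper states Corollary~\ref{C-3} without proof, treating it as an immediate consequence of Lemma~\ref{LA-3}, and your argument fills in those details accurately, including the one extra observation needed for $r=\infty$ (that $\nu_\infty(\hcA_k)=0$ for finite $k$) which is not covered by Lemma~\ref{LA-3} itself.
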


 We are now ready to prove the main theorem of this section, about the description of $\Aut(\hcA)$-invariant ergodic measures on $\T^\N$.

 \begin{proof}[Proof of Theorem \ref{T0}]
 In order to show that $\{\nu_r\}_{r=1}^{r=\infty}$ is the list of all $\Aut(\hcA)$-invariant ergodic measures, by Choquet theorem it suffices to show that any $\Aut(\hcA)$-invariant probability measure is the baricenter of some probability measure on $\{\nu_r\}_{r=1}^{r=\infty}$. That is, if $\nu$ is an $\Aut(\hcA)$-invariant probability measure on $\hat{\cA}$ then there are numbers $a_r\geq 0$, $r\in[1,\infty]$ such that $\sum_r a_r \nu_r=\nu$.
 
 Consider the inclusion onto the first two coordinates
 \[
 i_2:\Z^2\to\A=\oplus_\N\Z.
 \]
 It is easy to show that 
 \[
 \hat{i}_2: \T^\N\to \T^2
 \]
 is the projection onto the first two coordinates. Thus we have that  $(\hat{i}_2)_*\nu$ is an $\Aut(\T^2)$-invariant measure on $\T^2$ and that $(\hat{i}_2)_*\nu_r=\nu_r^{(2)}$, the Haar measures on $(\Z[1/r]/\Z)^2$.
 
 By Lemma \ref{LA2b} we have that there exist a sequence $\{a_r\}_{r=1}^{r=\infty}$ such that $\sum_r|a_r|<\infty$ and 
\[
(\hat{i}_2)_*\nu=\sum_{r\in[1,\infty]}a_r \nu_r^{(2)}=\sum_{r\in[1,\infty]}a_r (\hat{i}_2)_*\nu_r.
\]
By Lemmas \ref{LA1} and \ref{LA2} it follows that 
\[
\widehat{(\hat{i}_2)_*\nu}=\sum_{r\in[1,\infty]}a_r\, \widehat{(\hat{i}_2)_*\nu_r}.
\]
 
Using formula \eqref{property2}, we obtain
 \[
 \widehat{(\hat{i}_2)_*\nu}=\hat{\nu}\,i_2,
  \]
  \[
 \widehat{(\hat{i}_2)_*\nu_r}=\hat{\nu_r}\,i_2.
  \]
  and thus 
  \[
  \hat{\nu}(k)=\sum_{r\in[1,\infty]}a_r\, \hat{\nu_r}(k),\text{ for all }k\in \Z^2=\Z\oplus \Z\oplus \{0\}\oplus \{0\}\oplus \dots.
  \]
  
 It is easy to show that any $\Aut(\hcA)$-orbit in $\oplus_\N\Z$ intersects $\Z^2$. By Lemma \ref{LA3} all $\hat{\nu}$, $\hat{\nu}_r$ are $\Aut(\A)$-invariant functions. Thus we have from the previous equality that 
  \[
  \hat{\nu}(k)=\sum_{r\in[1,\infty]}a_r\, \hat{\nu_r}(k),\text{ for all }k\in \Z^\N.
  \]
 Using Lemmas \ref{LA1} and \ref{LA2} again we obtain that 
 \[
 \nu=\sum_{r\in[1,\infty]}a_r\, \nu_r.
 \]
 It is only left to show that all $a_r\geq 0$. Indeed, by Corollary \ref{C-3} we have that 
 \[
 a_{r}= a_r \nu_r(\hcA'_{r})=\nu(\hcA'_{r})\geq 0.
 \]
 \end{proof}

\end{document}